\newcommand{\N}{\mathbb{N}}
\newcommand{\Leq}{\preccurlyeq}
\newcommand{\SetIdeals}{\mathrm{J}}
\newcommand{\SetJoinIrreducibles}{\mathrm{JIrr}}
\newcommand{\SetNLT}{\mathrm{NLT}}
\newcommand{\Grow}{\mathrm{grow}}
\newcommand{\First}{\mathrm{first}}
\newcommand{\Next}{\mathrm{next}}
\newcommand{\Cut}{\mathrm{cut}}
\newtheorem{theorem}{Theorem}[section]
\newtheorem{proposition}[theorem]{Proposition}
\newtheorem{remark}[theorem]{Remark}
\newtheorem{corollary}[theorem]{Corollary}
\newtheorem{example}[theorem]{Example}
\tikzset{use/.style={}}
\tikzset{rectangle/.append style={circle, fill=blue!25}}
\newcommand{\PosetInsertion}[3]{
    \ensuremath{
    \mathop{
    \begin{tikzpicture}[scale=0.25]
        \ifnum#1=1 \fill[black!75] (0,0.5) rectangle (0.5,1); \fi
        \ifnum#2=1 \fill[black!75] (0,0) rectangle (0.5,0.5); \fi
        \ifnum#3=1 \fill[black!75] (0.5,0) rectangle (1,0.5); \fi
        \draw[thin](0,0.5) rectangle (0.5,1);
        \draw[thin](0,0) rectangle (0.5,0.5);
        \draw[thin](0.5,0) rectangle (1,0.5);
    \end{tikzpicture}
    }}
}
\newcommand{\PosetComposition}[2]{
    %{\ensuremath{
    %\mathop{
    \,
    \begin{tikzpicture}[scale=0.25]
        \ifnum#1=1 \fill[black!75] (0,0.5) rectangle (0.5,1); \fi
        \ifnum#2=1 \fill[black!75] (0.5,0) rectangle (1,0.5); \fi
        \draw[thin](0,0.5) rectangle (0.5,1);
        \draw[thin](0.5,0) rectangle (1,0.5);
    \end{tikzpicture}
    \,
    %}}}
}
\begin{document}

\title{Generating naturally labeled posets through \\matrix extensions, order ideals and automorphism groups}
\author{Gi-Sang Cheon$^{a}$\thanks{Corresponding author}, Samuele Giraudo$^{b}$, Gukwon Kwon$^{a}$, Hojoon Lee$^{a}$\\
{\footnotesize $^a$ \textit{Department of Mathematics, Sungkyunkwan
University (SKKU), Suwon 16419, Rep. of Korea}}\\
{\footnotesize $^{b}$ \textit{Universit\'e du Qu\'ebec \`a Montr\'eal (UQAM), LACIM, Montr\'eal, H2X 3Y7, Canada
}}\\
{\footnotesize gscheon@skku.edu, giraudo.samuele@uqam.ca, tomy1995@skku.edu, hojoon1101@g.skku.edu
}}

\thanks{This work was supported by the National Research Foundation of Korea(NRF) Grant funded by the Korean Government (MSIT)(RS-2025-00573047).
S. Giraudo was also partially supported by the Natural Sciences and Engineering Research
Council of Canada (RGPIN-2024-04465).}
 
\date{}
\maketitle

\begin{abstract}
We propose a matrix approach for generating naturally labeled posets by representing each poset $P$ on the set $[n]$ as a Boolean poset matrix $A$. 
This algebraic representation enables a systematic handling of partial orderings through matrix extensions of the form $A^v=\bigl[\begin{smallmatrix}A&0\\ v&1\end{smallmatrix}\bigr]$.
We show that $A^v$ defines a valid poset matrix if and only if the Boolean vector $v\in\mathbb B^n$ represents an order ideal of the poset $P$ associated to $A$, equivalently satisfying the fixed-point equation $vA=v$. Based on this characterization, we develop a sieve algorithm that generates all admissible extension vectors efficiently.
Furthermore, we explore the twin-class decomposition of $A$, which partitions the elements of $P$ according to identical down- and up-sets. 
This structure provides an algebraic foundation for Burnside-type enumeration for Birkhoff’s question on counting nonisomorphic posets on $[n]$ through the automorphism group ${\rm Aut}(A)$. 
Finally, we present an algorithmic generation scheme for the posets based on the topological growth of their distributive lattices, offering a new approach to constructive enumeration of poset families.
\end{abstract}

\noindent   
\textbf{Keywords:} poset matrix, naturally labeled poset, topology, $v$-extension, order ideal, twin
class, Automorphism group, Birkhoff’s question, Burnside lemma.

\medskip
\noindent
\textbf{MSC2020:} 06A07, 05A15, 15B34.

\section{Introduction}
Let $X_n$ denote the $n$-set $\{0,1,\ldots,n-1\}$. A {\it partial order} $\preceq$ on $X_n$ is a binary relation that is reflexive, antisymmetric, and transitive. A partially ordered set (shortly, {\it poset}) is an ordered pair $P = (X_n, \preceq)$ where we write $x \prec y$ if $x \preceq y$ and $x \ne y$. Two posets $P$ and $Q$ on $X_n$ are said to be {\it isomorphic}, denoted by $P\cong Q$, if there exists a bijection $\theta: P \rightarrow Q$ such that $x \preceq y$ if and only if $\theta(x) \preceq \theta(y)$. We denote ${\mathcal{NIP}}(n)$ by the collection of all {\it non-isomorphic} posets (shortly, NIPs) with $n$ elements. A classic problem, originally posed by Birkhoff \cite{Birkhoff}, is to determine $|{\mathcal{NIP}}(n)|$. This combinatorial challenge has remained unsolved, with known \cite{Brinkmann} only up to $n=16$. 
 
A partial order $\preceq$ on $X_n$ is said to be {\it natural} if $x \preceq y$ implies $x
\le y$. We refer to such posets as \emph{naturally labeled} posets (shortly, NL posets)
\cite{Bevan, Dean}. As pointed in \cite{Dean}, every finite poset is isomorphic to an NL poset and
every NL poset has a matrix representation by its order relation. From this point of
view, we are interested in enumerating NL posets using their matrix representations. The collection of all NL posets on $X_n$ will be denoted by
$\mathcal{NL}(n)$. The counting sequence for NL posets on $X_n$ is known up to $n=12$ (see
A006455 in the OEIS \cite{OEIS}).  

The incidence matrix $A$ of a poset $P\in \mathcal{NL}(n)$ with labeling $0,1,\ldots,n-1$ is defined by
\begin{eqnarray*}\label{pm}
A_{ij} = \begin{cases}
1 & \text{if } j \preceq i, \\
0 & \text{otherwise}
\end{cases}
\end{eqnarray*}
where $A_{ij}$ denotes the $(i,j)$-entry of $A$, and the indices $i,j$ start from $0$. We refer to the $n\times n$ matrix $A$ as the {\it poset matrix} of a poset $P\in \mathcal{NL}(n)$ (see also \cite{Cheon1, {CKLW},  Moha}). We will sometimes write $A_P$ instead of $A$. By definition, $A$ is a poset matrix if and only if $A$ is a $(0,1)$-lower triangular matrix with 1s on the main diagonal and it is transitive {\it i.e.}, $A_{ij}=1$ and $A_{jk}=1$ then $A_{ik}=1$; equivalently, $A$ has no submatrices of the form $(\begin{smallmatrix}1&{\bf 1}\\0&1\end{smallmatrix})$ where the bold entry is on the main diagonal (\cite{Bevan}). Accordingly,
if $A$ is a poset matrix then there exists a unique NL poset $P$ whose poset matrix is $A$, and we write $P_A$ for $P$. Let $\mathcal{PM}(n)$ denote the collection of all $n\times n$ poset matrices.
\begin{proposition}\label{pro1} (\cite{Bevan})
 There is a bijection between $\mathcal{NL}(n)$ and $\mathcal{PM}(n)$. 
\end{proposition}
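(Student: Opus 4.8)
The plan is to take the assignment $P\mapsto A_P$ from \eqref{pm} as the candidate bijection, exhibit an explicit inverse, and verify that both directions are well defined; the whole argument is a direct dictionary between the order axioms on $X_n$ and the defining conditions on $\mathcal{PM}(n)$.

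First I would check that $\Phi\colon\mathcal{NL}(n)\to\mathcal{PM}(n)$, $P=(X_n,\preceq)\mapsto A_P$, actually lands in $\mathcal{PM}(n)$. Writing $A_P=(a_{i,j})$: reflexivity of $\preceq$ gives $a_{i,i}=1$ for all $i$; naturality of the labelling ($j\preceq i\Rightarrow j\le i$) forces $a_{i,j}=0$ whenever $j>i$, so $A_P$ is a $(0,1)$-lower triangular matrix; and transitivity of $\preceq$ is literally the transitivity condition $a_{i,j}=a_{j,k}=1\Rightarrow a_{i,k}=1$. Hence $A_P$ is a poset matrix by the characterization recalled just after \eqref{pm}. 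Note that antisymmetry of $\preceq$ imposes no further constraint: for a lower-triangular $(0,1)$-matrix with unit diagonal, $a_{i,j}=a_{j,i}=1$ already forces $i=j$.

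Next I would define the inverse $\Psi\colon\mathcal{PM}(n)\to\mathcal{NL}(n)$ by sending $A=(a_{i,j})$ to the relation $\preceq_A$ on $X_n$ with $j\preceq_A i\iff a_{i,j}=1$, and verify that $(X_n,\preceq_A)$ is a naturally labelled poset: the unit diagonal yields reflexivity, transitivity of $A$ yields transitivity of $\preceq_A$, lower-triangularity yields antisymmetry (if $a_{i,j}=a_{j,i}=1$ with $i\neq j$, one of these entries is strictly above the diagonal), and $j\preceq_A i\Rightarrow a_{i,j}=1\Rightarrow j\le i$ shows the labelling is natural. I would then unwind the definitions to see that $\Phi$ and $\Psi$ are mutually inverse: the $(i,j)$-entry of $A_{P_A}$ equals $1$ iff $j\preceq_A i$ iff $a_{i,j}=1$, so $\Phi\circ\Psi=\id$, and symmetrically $\Psi\circ\Phi=\id$.

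Since every step is a routine translation, there is no genuine obstacle here; the only point worth stating with care is that the lower-triangular shape simultaneously encodes ``naturally labelled'' and antisymmetry, which is precisely why the target class $\mathcal{PM}(n)$ needs no separate axiom for the latter.
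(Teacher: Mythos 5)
Your proof is correct and follows the same direct dictionary between the order axioms and the defining conditions on $\mathcal{PM}(n)$ that the paper sketches in the paragraph preceding the proposition (the paper itself only cites the result to Bevan et al. and does not reproduce a proof). Your observation that lower-triangularity simultaneously enforces the natural labelling and renders antisymmetry automatic is a useful clarification that the paper leaves implicit.
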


By Proposition \ref{pro1}, the enumeration of NL posets on $X_n$ is equivalent to the enumeration of poset matrices in $\mathcal{PM}(n)$.
It is important to note that an unlabeled poset $P=(X_n,\preceq)$ may admit different
natural labeling, each inducing a distinct poset matrix.  Two poset matrices $A$ and $B$ in
${\cal PM}(n)$ are isomorphic, denoted by $A\cong B$, if there exists a permutation matrix
$Q\in \mathfrak{P}_n$ such that $$A=Q^TBQ$$ where $\mathfrak{P}_n$ is the group of $n\times
n$ permutation matrices. It is easy to show that if $A\cong B$ then $P_A\cong P_B$, and both
are isomorphic to the unlabeled poset obtained from deleting their labels. It follows that
the enumeration problem on finite posets can be considered on NL posets up
to isomorphism.

In this paper, we propose a matrix approach to the enumeration and generation of NL posets by observing that
every NL poset $P$ on $X_{n+1}$ can be obtained from some NL poset on $X_n$ by adjoining a new element together with its order relation.
In the corresponding matrix representation, this process is naturally expressed by
\begin{eqnarray*}\label{def}
\left[\begin{array}{cc} 
A & {\bf 0}\\
v & 1
\end{array}\right]
:=A^v,
\end{eqnarray*}
where $A\in\mathcal{PM}(n)$ is the poset matrix of the subposet on $X_n$, and $v\in\{0,1\}^n$ encodes the order relation between the new element $n$ and the elements $0,\ldots,n-1$ in $X_n$.
However, given a poset matrix $A\in\mathcal{PM}(n)$, not every $v\in\{0,1\}^n$ yields a valid poset matrix $A^v$ in $\mathcal{PM}(n+1)$.
Characterizing those vectors $v$ that preserve the transitivity of $A^v$ is the key to understanding how NL posets grow from $A$.
We call such a vector $v\in\{0,1\}^n$ a {\it poset vector} of $A$ if $A^v\in\mathcal{PM}(n+1)$. In that case, the resulting matrix $A^v$ is called a {\it matrix extension} of $A$, or simply a {\it $v$-extension} of $A$. Determining all admissible poset vectors of a given $A$ thus provides a matrix-based method for enumerating and generating NL posets, and it also connects with Birkhoff's question on NIPs.

More specifically, Section~2 characterizes all admissible vectors $v\in\{0,1\}^n$ by showing that $A^v$ defines a valid poset matrix if and only if the Boolean vector $v$ represents an order ideal of the poset $P_A$, equivalently satisfying the {\it fixed-point equation} $vA=v$ in Boolean matrix product. In Section 3, we develop a sieve algorithm for generating all admissible poset vectors associated with a given poset matrix. Section~4 investigates algebraic and combinatorial properties of the poset vector set, $\mathsf{PV}_n(A)$
and explains how this set affects the structure of $v$-extensions of $A$. In Section~5, we study the automorphism group
${\rm Aut}(P)$ using the twin-class decomposition of $P:=P_A$, which partitions the elements of $P$ according to identical
strict down- and up-sets. This decomposition separates the symmetries of a poset into internal symmetries within twin classes and external symmetries coming
from automorphisms of the associated twin poset. As a consequence, we obtain the semidirect product decomposition (see Theorem \ref{aut}),
$$
{\rm Aut}(P)
\cong
\left(\prod_{i=1}^r \mathfrak S_{|\alpha_i|}\right)\rtimes {\rm Im}(\pi),
$$
where $\alpha_1,\ldots,\alpha_r$ are the twin classes of $P$.

 Finally, in Section 6, we present an algorithmic generation scheme for NL posets based on the topological growth of their distributive lattices, offering a new approach to constructive enumeration of poset families.

\section{Admissible poset vectors}

A {\it Boolean vector} of size $n$ is an $n$-tuple of elements from the {\it Boolean algebra} $\mathbb{B}=\{0,1\}$ with the operations $+$ and $\cdot$ defined as follows \cite{Kkim}: 
 $$
 0+0=0\cdot 1=1\cdot0=0\cdot0=0,\quad\text{and}\quad 1+0=0+1=1+1=1\cdot1=1.
 $$
 By convention, we denote the zero vector by ${\bf 0}$. Let $\mathbb{B}^n$ denote the Boolean vector space of dimension $n$ equipped with entrywise addition.
 A $(0,1)$-matrix over the Boolean algebra $\mathbb B$ is called a {\it Boolean matrix}. 
  Throughout this paper, poset matrices and poset vectors are assumed to be Boolean matrices. We also consider only natural partial orderings.
 It is known~\cite{kim} that if $A$ is a poset matrix, it is idempotent over $\mathbb{B}$, that is, $A^2 = A$.   

Let $r_0,\dots,r_{n-1}\in \mathbb{B}^n$ denote the row vectors of $A=[a_{ij}]\in{\mathcal PM}(n)$. We define the {\it support} of $r_i$ by
$$
{\rm supp}(r_i):=\{j\in[n]\mid a_{ij}=1\}.
$$
A subset $J$ of $X_n$ is called an {\it order ideal} of $P=(X_n,\preceq)$ if $x\in J$ and $y\preceq x$ imply $y\in J$, i.e., $J$ is {\it downward closed} (see \cite{Davey}).
 The {\it principal} order ideal of $P$ generated by $x\in P$ means a {\it down set} of the form
$$
\downarrow x:=\{y\in P\mid y \preceq x\}.%,\quad \uparrow x :=\{y\in P\mid x \preceq y\}.
$$
By definition, ${\rm supp}(r_i)$ coincides with the principal order ideal $\downarrow i$ of $P$. Thus each $r_i$ can be considered as the incidence vector of $\downarrow i$.

\begin{theorem}\label{ppp}
Let $A\in\mathcal{PM}(n)$ and $v\in{\mathbb B}^n$. 
Then $v$ is a poset vector of $A$ if and only if $\mathrm{supp}(v)$ is an order ideal of $P_A$.
Moreover, the map $\mathrm{supp}$ is a bijection between $v$-extensions of $A$ and order ideals of $P_A$.
\end{theorem}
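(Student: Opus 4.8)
The plan is to reduce the condition $A^v \in \mathcal{PM}(n+1)$ to a Boolean identity on $v$, and then to recognize that identity as the order-ideal condition. First I would note that $A^v$ is automatically a $(0,1)$-matrix which is lower triangular with $1$s on the main diagonal: the block $A$ supplies these properties, the upper-right block is $\mathbf{0}$, the new row consists of $v$ followed by $1$, and the new diagonal entry equals $1$. Hence, by the characterization recalled in the introduction, $A^v$ is a poset matrix precisely when it is transitive; and for any $(0,1)$ lower-triangular matrix $M$ with unit diagonal one has $M^2 \ge M$ (the $k=i$ term of $(M^2)_{i,j}$ already equals $M_{i,j}$), so $M^2 = M$ is equivalent to $M^2 \le M$, which is exactly transitivity. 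Thus $A^v \in \mathcal{PM}(n+1)$ if and only if $A^v$ is Boolean idempotent.

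Next I would compute the Boolean square in block form, using that poset matrices satisfy $A^2 = A$ over $\mathbb{B}$:
$$(A^v)^2 = \begin{bmatrix} A^2 & \mathbf{0} \\ vA + v & 1 \end{bmatrix} = \begin{bmatrix} A & \mathbf{0} \\ vA + v & 1 \end{bmatrix}.$$
Comparing this with $A^v$ shows that $A^v$ is a poset matrix if and only if $vA + v = v$. Because $A$ has $1$s on the diagonal, $(vA)_j \ge v_j a_{j,j} = v_j$ for every $j$, so $vA \ge v$ entrywise; consequently $vA + v = v$ is equivalent to the fixed-point equation $vA = v$.

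Then I would unwind $vA = v$ coordinatewise. Since $vA \ge v$ always holds, $vA = v$ is equivalent to the implication that $(vA)_j = 1$ forces $v_j = 1$; that is, for all $i$ and $j$, if $v_i = 1$ and $a_{i,j} = 1$ then $v_j = 1$. Recalling that $a_{i,j} = 1$ means $j \preceq i$ in $P_A$, this says exactly that $i \in \mathrm{supp}(v)$ and $j \preceq i$ imply $j \in \mathrm{supp}(v)$, i.e. that $\mathrm{supp}(v)$ is downward closed, hence an order ideal of $P_A$. This establishes the stated equivalence.

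For the final sentence, I would observe that $v \mapsto A^v$ is a bijection from $\{0,1\}^n$ onto the matrices of the block shape $\bigl[\begin{smallmatrix} A & \mathbf{0} \\ v & 1 \end{smallmatrix}\bigr]$, identifying $v$-extensions of $A$ with poset vectors of $A$, and that $v \mapsto \mathrm{supp}(v)$ is injective on $\{0,1\}^n$; by the equivalence just proved, its restriction to poset vectors has image exactly the set of order ideals of $P_A$, and every order ideal $J$ is attained by taking $v$ to be the incidence vector of $J$. Reading $\mathrm{supp}$ off the bottom row of $A^v$ then yields the asserted bijection between $v$-extensions of $A$ and order ideals of $P_A$. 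I do not foresee a genuine obstacle here: the argument is a short Boolean computation. The only points that need a little care are the equivalence of transitivity and Boolean idempotence for unit-diagonal lower-triangular $(0,1)$-matrices, and the elementary inequality $vA \ge v$ that converts $vA + v = v$ into the advertised fixed-point equation.
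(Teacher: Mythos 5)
Your proof is correct, but it takes a genuinely different route from the paper's. The paper argues directly from the transitivity condition: since all the new entries in $A^v$ sit in row $n$, transitivity of $A^v$ reduces immediately to the implication ``$a_{n,i}=1$ and $a_{i,j}=1$ imply $a_{n,j}=1$,'' which is read off verbatim as the order-ideal condition on $\mathrm{supp}(v)$; the bijection statement is then the same observation you make. You instead pass through Boolean idempotence: you first check that for a unit-diagonal lower-triangular $(0,1)$-matrix $M$, transitivity is equivalent to $M^2=M$ (since $M^2\ge M$ automatically), then compute $(A^v)^2$ in block form using $A^2=A$ to reduce the poset-matrix condition to $vA+v=v$, and finally use $vA\ge v$ to rewrite this as the fixed-point equation $vA=v$, which you unwind coordinatewise as the order-ideal condition. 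Your argument is slightly longer, but it has a real payoff: the fixed-point characterization $vA=v$ you derive along the way is precisely the content of the paper's later Theorem~\ref{fixedpoint}, which the paper proves separately by a different (support-by-support) computation. So your approach unifies Theorem~\ref{ppp} and Theorem~\ref{fixedpoint} into a single Boolean block computation, at the cost of the small preliminary lemma about idempotence versus transitivity.
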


\begin{proof}
Let $v\in{\mathbb B}^n$ be a poset vector of $A\in\mathcal{PM}(n)$. Then $A^v\in\mathcal{PM}(n+1)$ by definition. 
Let $A^v=[a_{i,j}]$ where $v=(a_{n,0},\ldots,a_{n,n-1}).$ Since $A^v$ is transitive, it follows that $a_{n,i}=1$ and $a_{i,j}=1$ implies $a_{n,j}=1$. Equivalently, if $i\in{\rm supp}(v)$ and $j\preceq i$ in $P_A$, then $j\in{\rm supp}(v)$. Hence ${\rm supp}(v)$ is an order ideal of $P_A$. Conversely, if ${\rm supp}(v)$ is an order ideal of $P_A$, then
the above implication holds for all $i,j\in X_n$. Thus $A^v$ satisfies transitivity, which implies $A^v\in\mathcal{PM}(n+1)$ and hence $v$ is a poset vector of $A$.

Moreover, since each poset vector $v$ determines a unique $v$-extension $A^v$ 
and vice versa, the correspondence $v\leftrightarrow\ {\rm supp}(v)$ gives a bijection between the family of $v$-extensions of $A$ and the set of order ideals of $P_A$. 
This completes the proof.
\end{proof}

In terms of Theorem \ref{ppp}, a poset vector $v$ of $A$ is referred to as a {\it
characteristic vector} of the order ideal $J$ of $P_A$, and we write $v=\chi(J)$. For
a subset $S\subseteq X_n$ with $|S|=k$, we denote by $A[S]$ the $k\times k$ principal
submatrix of $A$ obtained by taking the rows and columns whose indices lie in $S$. Recall
that the Boolean vector sum is the entrywise sum over the Boolean algebra $\{0,1\}$.

\begin{theorem}\label{thm333}
Let $A\in\mathcal{PM}(n)$ be a poset matrix with row vectors 
$r_0,\ldots,r_{n-1}\in{\mathbb B}^n$. For $v\in {\mathbb B}^n$, $v$ is a poset vector of $A$ if and only if $v=\sum_{i\in S} r_i$ for some $S\subseteq X_n$ with $|S|=k\ge0$ such that 
$A[S]$ is the $k\times k$ identity matrix $I_k$, where the sum is the Boolean vector sum and $v=\mathbf{0}$ when $k=0$.
\end{theorem}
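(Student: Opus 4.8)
The plan is to reduce the statement to Theorem~\ref{ppp}, which already matches poset vectors of $A$ with order ideals of $P_A$, together with two pieces of bookkeeping: first, $\mathrm{supp}(r_i)=\downarrow i$ (noted just before the theorem), and second, the Boolean sum $\sum_{i\in S}r_i$ is the incidence vector of $\bigcup_{i\in S}\mathrm{supp}(r_i)$, since a Boolean coordinate sum equals $1$ exactly when at least one summand is $1$. I would also record the translation ``$A[S]=I_k$ if and only if $S$ is an antichain of $P_A$'': for distinct $i,j\in S$ the entry $a_{ij}$ is $1$ iff $j\preceq i$, so $A[S]$ has vanishing off-diagonal part iff no two elements of $S$ are comparable, while its diagonal is all $1$s by reflexivity. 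Granting these, Theorem~\ref{thm333} is just the classical fact that the order ideals of a finite poset are precisely the down-closures of antichains, phrased in matrix terms.

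For the ``if'' direction I would argue as follows. If $k=0$ then $v=\mathbf{0}$, $\mathrm{supp}(v)=\emptyset$ is an order ideal of $P_A$, and Theorem~\ref{ppp} gives that $v$ is a poset vector. If $k\ge 1$ and $A[S]=I_k$, then $\mathrm{supp}(v)=\bigcup_{i\in S}\downarrow i$, a union of principal down-sets; this set is downward closed because $y\preceq x$ and $x\in\downarrow i$ force $y\in\downarrow i$. Hence $\mathrm{supp}(v)$ is an order ideal and $v$ is a poset vector by Theorem~\ref{ppp}. (In fact the hypothesis $A[S]=I_k$ plays no role in this direction; it is only needed to obtain a canonical $S$ in the converse.)

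For the ``only if'' direction, let $v$ be a poset vector of $A$ and set $J:=\mathrm{supp}(v)$, an order ideal of $P_A$ by Theorem~\ref{ppp}. Take $S$ to be the set of maximal elements of $J$; since distinct maximal elements are incomparable, $A[S]=I_{|S|}$ by the translation above. I would then check $J=\bigcup_{i\in S}\downarrow i$: the inclusion $\supseteq$ holds because each $i\in S$ lies in the downward closed set $J$, and $\subseteq$ holds because $X_n$ is finite, so every element of $J$ sits below some maximal element of $J$. Consequently $v$, being the incidence vector of $J=\bigcup_{i\in S}\mathrm{supp}(r_i)$, equals $\sum_{i\in S}r_i$; the empty ideal is handled by $S=\emptyset$, $k=0$. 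This establishes both implications.

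The argument is short and I do not expect a serious obstacle. The only care required is the degenerate case $k=0$, where the empty Boolean sum must be read as $\mathbf{0}$, and a clean statement of the ``Boolean sum of rows $=$ incidence vector of the union of supports'' dictionary before invoking Theorem~\ref{ppp}. The order-theoretic core — that an order ideal is generated by its antichain of maximal elements — is standard, and the finiteness of $X_n$ makes the ``$\subseteq$'' step immediate.
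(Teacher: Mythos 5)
Your proposal is correct and follows essentially the same route as the paper: both directions go through Theorem~\ref{ppp}, the forward direction takes $S$ to be the set of maximal elements of the ideal $\mathrm{supp}(v)$ and uses the standard antichain/ideal duality, and the converse observes that a union of principal down-sets is an order ideal. Your side remarks (that $A[S]=I_k$ is not actually needed in the ``if'' direction, and that finiteness of $X_n$ is what guarantees every element of $J$ lies under a maximal one) are accurate and, if anything, make the argument slightly more explicit than the paper's.
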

\begin{proof} Assume  $v\in {\mathbb B}^n$ is a poset vector of $A$. By Theorem~\ref{ppp}, 
$J:=\mathrm{supp}(v)$ is an order ideal of $P_A$. If $J=\emptyset$, then $v=\mathbf{0}$ and thus the stated property holds with $S=\emptyset$.
Suppose $J\neq\emptyset$ and let $S$ be the set of maximal elements of $J$ where $|S|=k$. We show that $S$ is the subset of $X_n$ with the required properties. Clearly, $S\subseteq X_n$. Since maximal elements are pairwise incomparable, it follows $A[S]=I_k$. Moreover, since every element of $J$ lies in the down-set of some $i\in S$, we have
$$
\mathrm{supp}(v)=J=\bigcup_{i\in S}\downarrow i=\bigcup_{i\in S}\mathrm{supp}\;(r_i)=\mathrm{supp}\;\!\Big(\sum_{i\in S} r_i\Big).
$$
The maximality of $|S|$ implies $v=\sum_{i\in S}r_i$, as required. 

Conversely, suppose $S$ is a subset of $X_n$ with $A[S]=I_k$ such that $v=\sum_{i\in S} r_i$. Then
$$
\mathrm{supp}(v)=\mathrm{supp}\Big(\sum_{i\in S} r_i\Big)\;=\;\bigcup_{i\in S}\downarrow i,
$$
which is a union of principal ideals of $P_A$. Hence $\mathrm{supp}(v)$ is an order ideal of $P_A$. It follows from Theorem~\ref{ppp} that
$v$ is a poset vector of $A$. The case $S=\emptyset$ yields $v=\mathbf{0}$ and is trivial. This completes the proof.
\end{proof}

By Theorems \ref{ppp} and \ref{thm333}, each order ideal $J$ of $P_A$ is uniquely generated by its
antichain of maximal elements, and conversely, each antichain $S$ generates the order ideal
$$\downarrow {S}=\{x\in P\mid x\preceq y\;{\text{for all $y\in S$}}\}.
$$
of $P_A$ (also see \cite{Davey, Stanley}).
\medskip

The following corollary ensures that $v$-extensions preserve the transitivity and partial order by enforcing the order ideal condition. It also reduces the search for all possible extensions $A^v$ to the enumeration of antichains in the NL poset $P_A$.

\begin{corollary}\label{ccol}
Let $v$ be a poset vector of $A\in\mathcal{PM}(n)$, and let $P^v:= P_A\cup\{n\}$ be the NL poset on $X_{n+1}$ associated to $A^v\in\mathcal{PM}(n+1)$.
Then the new element $n$ covers $x\in P_A$ if and only if $x\in S$, where $S$ is an antichain of $P_A$, including the empty antichain.
\end{corollary}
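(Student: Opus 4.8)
The plan is to combine Theorems~\ref{ppp} and~\ref{thm333} with an elementary analysis of the covering relation in $P^v$. First I would set $J := \mathrm{supp}(v)$; by Theorem~\ref{ppp} this is an order ideal of $P_A$, and by Theorem~\ref{thm333} there is an antichain $S\subseteq X_n$ with $A[S]=I_k$ and $v=\sum_{i\in S}r_i$, namely the antichain of maximal elements of $J$. This $S$ is the antichain in the statement, and the empty antichain arises exactly when $v=\mathbf{0}$, that is, when $J=\emptyset$.

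Next I would record the basic structural fact about $P^v$: since the last row of $A^v$ equals $(v\mid 1)$ and every off-diagonal entry of its last column is $0$, the new element $n$ is maximal in $P^v$, and for $x\in P_A$ one has $x\prec n$ in $P^v$ if and only if $a_{n,x}=1$, i.e.\ if and only if $x\in J$. In particular $\downarrow\! n\cap P_A=J$, and because $J$ is downward closed in $P_A$, any $y$ with $x\prec y\prec n$ automatically lies in $J$.

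With these observations the equivalence is immediate. If $n$ covers $x$, then $x\prec n$, so $x\in J$; and there is no $y$ with $x\prec y\prec n$, hence (using that every such $y$ lies in $J$) $x$ is maximal in $J$, i.e.\ $x\in S$. Conversely, if $x\in S$ then $x\in J$, so $x\prec n$; and if some $y$ satisfied $x\prec y\prec n$ then $y\in J$ with $x\prec y$, contradicting the maximality of $x$ in $J$, so $n$ covers $x$.

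Since the argument is essentially a bookkeeping exercise once Theorems~\ref{ppp} and~\ref{thm333} are in hand, I do not anticipate a genuine obstacle; the single point requiring a little care is the verification that an intermediate element $y$ of a would-be cover $x\prec y\prec n$ is forced into $J$, which is exactly where the order-ideal (downward-closed) property of $J=\mathrm{supp}(v)$ is used rather than merely the defining equality $J=\mathrm{supp}(v)$ itself.
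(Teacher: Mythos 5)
Your argument is correct and matches the paper's intended reasoning: identify $J=\mathrm{supp}(v)$ as the order ideal (Theorem~\ref{ppp}), take $S$ to be its set of maximal elements (Theorem~\ref{thm333}), and observe that $n$ lies above exactly the elements of $J$ and covers exactly the maximal ones, namely $S$. One small remark on your closing comment: the step that an intermediate $y$ with $x\prec y\prec n$ lies in $J$ does not actually need the downward-closed property of $J$ --- it follows directly from the equivalence $y\prec n \iff y\in J$ that you already established from the last row of $A^v$; the order-ideal property is instead what guarantees that $A^v$ is a valid poset matrix in the first place.
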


Recently, the authors \cite{CKLW} showed that counting antichains of the {\it Pascal poset} ${\mathbb P}_n$ is equivalent to enumerating order ideals of ${\mathbb P}_n$. 
Moreover, they established a correspondence between order ideals of $\mathbb P_n$ 
and Boolean vectors $x \in \mathbb B^n$ satisfying the {\it Boolean fixed point equation}
$x {\bf P}_n = x$, where ${\bf P}_n = [p_{i,j}]$ is the $n \times n$ binary Pascal matrix defined by
\begin{equation}\label{Lucas}
p_{i,j} \;=\; \binom{i}{j} \pmod{2},
\end{equation}
which is the poset matrix of the Pascal poset ${\mathbb P}_n$ (see \cite{Cheon1}).

Recall that the {\it Dedekind number} \cite{Dedekind}, denoted by $M(n)$, is the number of
monotone Boolean functions in $n$ variables. Equivalently, it is the number of
antichains in the $n$-dimensional {\it Boolean lattice} 
$\mathbb{B}_n = (2^{X_n}, \subseteq)$, consisting of all subsets of $X_n$ ordered by inclusion. 
Exact values of $M(n)$ are known only for $0 \le n \le 9$ (A000372 in the OEIS):
$$
M(0)=2,\; M(1)=3,\; M(2)=6,\; M(3)=20,\; M(4)=168,\; M(5)=7581,\; M(6)=7828354, \ldots.
$$

Since ${\bf P}_{2^n}$ is the poset matrix of the Boolean lattice $\mathbb B_n$, 
which is isomorphic to the Pascal poset ${\mathbb P}_{2^n}$, 
the number of its poset vectors coincides with the number of antichains of $\mathbb B_n$. 
Thus we obtain the following result.

\begin{theorem}\label{Bpascal}
The number of poset vectors of ${\bf P}_{2^n}$ for $n \ge 0$ is equal to the Dedekind number $M(n)$.
\end{theorem}
 
    For example, let ${\bf P}_{2^n}^{v}$ be $v$-extensions with $v\in{\mathbb B}^{2^n}$. Consider $n=0,1,2$:
{\small\begin{itemize}
\item   ${\bf P}_1^{v}=\left[
\begin{array}{c|c}
1 & 0 \\ \hline
v & 1
\end{array}
\right]
\in\mathcal{PM}(2)\Leftrightarrow v\in\{[0],[1]\}$,\quad $M(0)=2$;
\item   ${\bf P}_2^{v}=
\left[
\begin{array}{cc|c}
1 & 0 & 0 \\
1 & 1 & 0 \\ \hline
\multicolumn{2}{c|}{v} & 1
\end{array}
\right]
\in\mathcal{PM}(3)\Leftrightarrow v\in\{[0\;0],[1\;0],[1\;1]\}$,\quad $M(1)=3$;
 \item  $ {\bf P}_4^{v}= 
\left[
\begin{array}{cccc|c}
1 & 0 & 0 & 0 & 0 \\
1 & 1 & 0 & 0 & 0 \\
1 & 0 & 1 & 0 & 0 \\
1 & 1 & 1 & 1 & 0 \\ \hline
\multicolumn{4}{c|}{v} & 1
\end{array}
\right]
 \in\mathcal{PM}(5)\Leftrightarrow v\in\{[0\;0\;0\;0],[1\;0\;0\;0],[1\;1\;0\;0],[1\;0\;1\;0],[1\;1\;1\;1],[1\;1\;1\;0]\}$. Hence $M(2)=6$.
 \end{itemize}}
 \medskip
 
  Denote by $\SetIdeals(P)$ the set of order ideals of $P\in{\cal NL}(n)$, by
$\textsf{PV}_n(A)$ the set of poset vectors of $A\in\mathcal{PM}(n)$, and by $\textsf{NL}_{n+1}(P_A^v)$ the set
of NL posets on $X_{n+1}$ associated to any $v\in\textsf{PV}_n(A)$.
\medskip

One of the most important ways to study a Boolean matrix is to consider its row space. It is known \cite{Kkim} that the row space forms a {\it lattice}, and conversely every lattice can be represented as the row space of some Boolean matrix. 
Let $\mathsf{Row}_{\mathbb{B}}(A)$ be the row space of the poset matrix $A\in{\mathcal PM}(n)$, which is a subspace of $\mathbb{B}^n$ spanned by its rows.

\begin{theorem}\label{rowspace}
Let $A\in\mathcal{PM}(n)$ be a poset matrix. Then $\mathsf{Row}_{\mathbb{B}}(A)=\mathsf{PV}_n(A).$
\end{theorem}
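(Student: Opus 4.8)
The plan is to show the two inclusions $\mathsf{PV}_n(A)\subseteq\mathsf{Row}_{\mathbb{B}}(A)$ and $\mathsf{Row}_{\mathbb{B}}(A)\subseteq\mathsf{PV}_n(A)$ separately, using Theorem~\ref{thm333} as the main engine. Recall that $\mathsf{Row}_{\mathbb{B}}(A)$ consists of all Boolean sums $\sum_{i\in T}r_i$ over arbitrary subsets $T\subseteq X_n$ (together with $\mathbf{0}$, the empty sum).

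For the inclusion $\mathsf{PV}_n(A)\subseteq\mathsf{Row}_{\mathbb{B}}(A)$: let $v$ be a poset vector of $A$. By Theorem~\ref{thm333}, $v=\sum_{i\in S}r_i$ for some $S\subseteq X_n$ with $A[S]=I_k$. In particular $v$ is a Boolean sum of rows of $A$, hence $v\in\mathsf{Row}_{\mathbb{B}}(A)$. (The case $v=\mathbf{0}$ is the empty sum.) This direction is immediate.

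For the reverse inclusion $\mathsf{Row}_{\mathbb{B}}(A)\subseteq\mathsf{PV}_n(A)$: take an arbitrary $w\in\mathsf{Row}_{\mathbb{B}}(A)$, say $w=\sum_{i\in T}r_i$ for some $T\subseteq X_n$. Then $\mathrm{supp}(w)=\bigcup_{i\in T}\mathrm{supp}(r_i)=\bigcup_{i\in T}{\downarrow}\,i$, which is a union of principal order ideals of $P_A$ and therefore an order ideal of $P_A$. By Theorem~\ref{ppp}, $w$ is a poset vector of $A$, i.e. $w\in\mathsf{PV}_n(A)$. This completes the argument.

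Effectively both inclusions are already packaged inside the two-directional statement of Theorem~\ref{thm333} (the ``conversely'' part handles arbitrary row sums by reducing to the antichain of maximal elements of the resulting ideal), so the only real content is to observe that $\mathsf{Row}_{\mathbb{B}}(A)$ is by definition exactly the set of Boolean row sums and to invoke the characterization of poset vectors as those spanning an order ideal. There is no genuine obstacle here; the one point to state carefully is that a sum over a \emph{non-antichain} set $T$ still lies in $\mathsf{PV}_n(A)$ — which follows because a union of principal ideals is an ideal, regardless of whether the generators are incomparable — so that $\mathsf{Row}_{\mathbb{B}}(A)$ is not larger than $\mathsf{PV}_n(A)$. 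I would also remark, as a byproduct, that this shows $\mathsf{PV}_n(A)$ is closed under Boolean sum and hence is itself a lattice (indeed the lattice $\SetIdeals(P_A)$ of order ideals, transported by $\chi$), recovering the known correspondence between row spaces of Boolean matrices and lattices.
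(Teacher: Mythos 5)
Your proof is correct and takes essentially the same route as the paper: both rely on the equivalence between poset vectors and order ideals (Theorem~\ref{ppp}), and both use the observation that any Boolean row sum has support equal to a union of principal down-sets, hence an order ideal; the paper simply compresses the two inclusions into a single chain of set equalities while you spell them out separately. The only cosmetic difference is that the paper also inserts the intermediate ``sums over antichains'' description from Theorem~\ref{thm333}, which your argument uses only for the forward inclusion.
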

\begin{proof} Since the row space of $A$ is the set of all finite sums of row vectors of $A$, it follows that
\begin{eqnarray*}
\mathsf{Row}_{\mathbb{B}}(A)
&=&\Bigl\{\;\sum_{i\in S} r_{i}\;\Bigm|\;S\subseteq[n]\Bigr\}
=\Bigl\{\;v\in\mathbb B^n\;\Bigm|\;
\operatorname{supp}(v)\in  \SetIdeals(P_A)\Bigr\}\\
&=&\Big\{\, \sum_{i\in M} r_i \;\Big|\; M\subseteq [n]\text{ is an antichain of }P_A \Big\}\\
&=&\textsf{PV}_n(A),
\end{eqnarray*}
 which completes the proof.
\end{proof}

Since Theorem~\ref{ppp} implies $\textsf{PV}_n(A)\cong\SetIdeals(P_A)$, we have 
$$
|\mathsf{Row}_{\mathbb{B}}(A)|=|\textsf{PV}_n(A)|=|\SetIdeals(P_A)|=|\textsf{NL}_{n+1}(P_A^v)|.
$$
Thus, the {\it total} number of NL posets on $X_{n+1}$ can be recursively obtained by
\begin{eqnarray*}\label{NLP}
|\mathcal{NL}(n+1)|=\sum_{A\in{\cal PM}(n)} |\textsf{PV}_n(A)|=\sum_{A\in{\cal PM}(n)} |\textsf{NL}_{n+1}(P^v_A)|.
\end{eqnarray*}

It is known \cite{Avann, Dean} that $\mathcal{NL}(n)$ is the lattice ordered by $\subseteq$, where $P\subseteq Q$ if $x\preceq_P y$ implies $x\preceq_Q y$ for all $x,y\in X_n$. 
Theorem \ref{rowspace} implies the following corollaries. 

\begin{corollary}
Let $A\in\mathcal{PM}(n)$ be a poset matrix. Then $\mathsf{PV}_n(A)$ is a distributive
lattice ordered by entrywise $\leq$.
Moreover, $\mathsf{NL}_{n+1}(P^v_A)$ is a sublattice of $\mathcal{NL}(n+1)$.
\end{corollary}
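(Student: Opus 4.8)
The plan is to transport both claims through the bijection of Theorem~\ref{ppp}. Write $\operatorname{supp}\colon \mathsf{PV}_n(A)\to \SetIdeals(P_A)$ for the map sending a poset vector to its support; by Theorem~\ref{ppp} it is a bijection, and since $v\le w$ entrywise if and only if $\operatorname{supp}(v)\subseteq\operatorname{supp}(w)$, it is in fact an isomorphism of posets from $(\mathsf{PV}_n(A),\le)$ onto $(\SetIdeals(P_A),\subseteq)$. By Birkhoff's representation theorem \cite{Davey}, the ideal lattice $(\SetIdeals(P_A),\subseteq)$ is distributive, with meet given by intersection and join by union. Hence $(\mathsf{PV}_n(A),\le)$ is a distributive lattice; explicitly, $v\wedge w$ is the entrywise minimum and $v\vee w$ is the entrywise maximum, that is, the Boolean sum $v+w$ (both are again poset vectors because the intersection, resp.\ the union, of two order ideals is an order ideal). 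This proves the first assertion.

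For the second assertion I would exhibit the map $\phi\colon \mathsf{PV}_n(A)\to\mathcal{NL}(n+1)$, $\phi(v)=P_{A^v}$, whose image is precisely $\mathsf{NL}_{n+1}(P^v_A)$, and show that it is a lattice embedding. Injectivity is immediate: $v$ is recovered from the last row of $A^v$, and a poset matrix determines a unique NL poset. That $\phi$ is an order embedding follows from the block form \eqref{def}: the matrices $A^v$ and $A^w$ agree outside the first $n$ entries of their last row, so $A^v\le A^w$ entrywise if and only if $v\le w$ entrywise; and, by the indexing \eqref{pm}, the relation $\subseteq$ on $\mathcal{NL}(n+1)$ is exactly the entrywise order on poset matrices. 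Thus $\phi$ is an isomorphism of posets onto its image.

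It remains to verify that $\mathsf{NL}_{n+1}(P^v_A)$ is closed under the meet and join of $\mathcal{NL}(n+1)$, which is the step demanding care. In $\mathcal{NL}(n+1)$ the meet of two NL posets is the intersection of their relations, which on the matrix side is the entrywise minimum; the join is the smallest NL partial order containing both, namely the transitive closure of the union of the relations (this is antisymmetric, hence a partial order, because every NL partial order on $X_{n+1}$ is contained in the linear order $0<1<\cdots<n$). On the matrix side the join is therefore the transitive closure of the Boolean sum. Now the block form gives $A^v\wedge A^w=A^{v\wedge w}$ and $A^v+A^w=A^{v+w}$, and $v\wedge w$ and $v+w$ are poset vectors of $A$ by the remark above; hence, by Theorem~\ref{ppp}, both $A^{v\wedge w}$ and $A^{v+w}$ already lie in $\mathcal{PM}(n+1)$. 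In particular $A^{v+w}$ is transitive and so equals its own transitive closure. Consequently $\phi(v)\wedge\phi(w)=P_{A^{v\wedge w}}$ and $\phi(v)\vee\phi(w)=P_{A^{v+w}}$, both in $\mathsf{NL}_{n+1}(P^v_A)$. Therefore $\mathsf{NL}_{n+1}(P^v_A)$ is a sublattice of $\mathcal{NL}(n+1)$, and $\phi$ is then a lattice isomorphism, so this sublattice is in fact distributive as well.

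The only genuine obstacle is the penultimate step: a Boolean sum of arbitrary poset matrices need not be transitive, so one must invoke the order-ideal characterization of Theorem~\ref{ppp} to see that, restricted to $v$-extensions of a fixed $A$, the Boolean sum is automatically transitive and hence realizes the join of $\mathcal{NL}(n+1)$ with no further closure; dually, one should record the elementary fact that an intersection of partial orders is a partial order, so that the entrywise minimum of poset matrices is again a poset matrix. Everything else is bookkeeping with the block structure \eqref{def} and the indexing \eqref{pm}.
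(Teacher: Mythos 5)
Your proof of the first assertion is essentially the paper's: transport the lattice structure through the order isomorphism $v\mapsto\operatorname{supp}(v)$ between $(\mathsf{PV}_n(A),\le)$ and $(\SetIdeals(P_A),\subseteq)$. That part matches.

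For the second assertion, the paper offers no argument at all — it simply asserts that $\mathsf{NL}_{n+1}(P_A^v)$ is a sublattice of $\mathcal{NL}(n+1)$ as a consequence of the isomorphism. You instead do the work of actually verifying closure of the image of $\phi\colon v\mapsto P_{A^v}$ under the meet and join of the ambient lattice. Your identification of the crux is exactly right: the meet of $\mathcal{NL}(n+1)$ is relation intersection (entrywise minimum of matrices), which never leaves the class of poset matrices, but the join is a priori a \emph{transitive closure} of the union, and one needs to see that for $A^v$ and $A^w$ with a common $A$-block this closure contributes nothing. Your observation that $A^v + A^w = A^{v+w}$ and that $v+w$ is again a poset vector (since $\SetIdeals(P_A)$ is closed under union), so that $A^{v+w}$ is already transitive by Theorem~\ref{ppp}, is the correct and necessary step that the paper leaves implicit. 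Your appeal to the natural linear order $0<1<\cdots<n$ to justify antisymmetry of the transitive closure of a union of natural partial orders is also sound. In short: correct, and more complete than the source.
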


\begin{proof} By Theorem \ref{rowspace}, there is a one-to-one correspondence $\Phi:\mathsf{PV}_n(A)\to \SetIdeals(P_A)$ given by $\Phi(v)=\mathrm{supp}(v)$.
In addition, for $u,v\in\{0,1\}^n$, $u\le v$ if and only if $\mathrm{supp}(u)\subseteq \mathrm{supp}(v)$, so that $\Phi$ is an order isomorphism.
Since $\SetIdeals(P_A)$ forms a distributive lattice, it follows that $\mathsf{PV}_n(A)$, being order-isomorphic to $\SetIdeals(P_A)$, is also a distributive lattice. Moreover, $\mathsf{NL}_{n+1}(P^v_A)$ is a sublattice of $\mathcal{NL}(n+1)$.
\end{proof}

\begin{corollary}
The lattice $\mathcal{NL}(n+1)$ is the disjoint union of the sublattices
$\mathsf{NL}_{n+1}(P^v_A)$, that is
$$
\mathcal{NL}(n+1)
\;=\;
\bigsqcup_{A\in\mathcal{PM}(n)} \mathsf{NL}_{n+1}(P^v_A).
$$
\end{corollary}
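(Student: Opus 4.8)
The plan is to realize the right-hand side as the partition of $\mathcal{PM}(n+1)$ obtained by recording the top-left $n\times n$ block of each poset matrix, and then to transport it to $\mathcal{NL}(n+1)$ through the bijection of Proposition~\ref{pro1}. The starting observation is that in any NL poset on $X_{n+1}$ the largest label $n$ is automatically a maximal element: if $n\preceq y$ then $n\le y$ forces $y=n$. Hence the poset matrix $M=A_P$ of any $P\in\mathcal{NL}(n+1)$ has its last column equal to the unit vector $e_n$, so $M$ splits in block form
$$
M=\left[\begin{array}{cc} A & \mathbf{0}\\ v & 1\end{array}\right]=A^v,
$$
where $A$ is the principal submatrix $M[X_n]$ and $v\in\{0,1\}^n$ is formed by the first $n$ entries of the last row of $M$.

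Next I would verify the two membership facts that make this splitting legitimate. First, $A\in\mathcal{PM}(n)$: a principal submatrix of a $(0,1)$-lower-triangular transitive matrix with unit diagonal is again of this form, since it inherits from $M$ the absence of any forbidden $\bigl(\begin{smallmatrix}1&1\\0&1\end{smallmatrix}\bigr)$-submatrix with the bold entry on the diagonal. Second, $v\in\mathsf{PV}_n(A)$: by hypothesis $A^v=M$ is a poset matrix, so $v$ is a poset vector of $A$ straight from the definition; equivalently, transitivity of $M$ forces $\mathrm{supp}(v)$ to be an order ideal of $P_A$, by Theorem~\ref{ppp}. Therefore $P=P_M$ lies in $\mathsf{NL}_{n+1}(P_A^v)$, and so $\mathcal{NL}(n+1)\subseteq\bigcup_{A\in\mathcal{PM}(n)}\mathsf{NL}_{n+1}(P_A^v)$; the reverse inclusion is immediate, as each $\mathsf{NL}_{n+1}(P_A^v)$ consists by definition of NL posets on $X_{n+1}$.

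Then I would deal with disjointness: if $P\in\mathsf{NL}_{n+1}(P_A^v)\cap\mathsf{NL}_{n+1}(P_B^w)$ for $A,B\in\mathcal{PM}(n)$, then $A_P=A^v=B^w$, and comparing the top-left $n\times n$ blocks gives $A=B$ (and $v=w$). Hence distinct matrices $A\in\mathcal{PM}(n)$ index disjoint classes, so the union is indeed a disjoint union. That each class $\mathsf{NL}_{n+1}(P_A^v)$ is moreover a \emph{sublattice} of $\mathcal{NL}(n+1)$ is already supplied by the preceding corollary, so no extra argument is needed on that point.

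I do not expect a real obstacle here: the statement is a bookkeeping consequence of the block form $A^v=\bigl[\begin{smallmatrix}A&0\\v&1\end{smallmatrix}\bigr]$ of poset matrices together with Theorem~\ref{ppp}. The single point I would be careful to state explicitly is the maximality of $n$ in every NL poset on $X_{n+1}$, because this is precisely what makes ``delete the top element'' a well-defined inverse to the $v$-extension construction and guarantees that the last column of $M$ is $e_n$; everything else is routine verification of the defining properties of poset matrices and poset vectors.
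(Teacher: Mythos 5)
The paper states this corollary without proof, presenting it as an immediate consequence of Theorem~\ref{rowspace} and the preceding corollary (and, in spirit, of the observation made already in the introduction that every NL poset on $X_{n+1}$ arises from some NL poset on $X_n$ by adjoining $n$). Your argument is a correct and complete verification of exactly the bookkeeping the paper leaves implicit. You rightly isolate the one non-trivial fact that makes everything work: since the labeling is natural, $n$ is forced to be maximal in any $P\in\mathcal{NL}(n+1)$, so the last column of $A_P$ is $e_n$ and $A_P$ decomposes uniquely as $A^v$ with $A=A_P[X_n]\in\mathcal{PM}(n)$ and $v$ the truncated last row. Your checks that $A$ is again a poset matrix, that $v\in\mathsf{PV}_n(A)$ follows from the definition (or equivalently from Theorem~\ref{ppp}), and that the decomposition $M=A^v$ determines $A$ and $v$ uniquely (hence disjointness over $A$) are all sound, and the appeal to the earlier corollary for the sublattice property is appropriate. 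In short, your proof matches the intended argument; it just spells out what the paper treats as evident.
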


We end this section by noticing that all poset vectors in $\textsf{PV}_n(A)$ provide different NL posets on $X_{n+1}$ but {\it not all} are non-isomorphic.
 
  For example, let $A=\begin{bmatrix}
1 & 0 \\
0 & 1\end{bmatrix}$. 
Then $\textsf{PV}_2(A)=\{(0,0),(1,0),(0,1),(1,1)\}$. Since
$$
A^{(1,0)}=\begin{bmatrix}
1 & 0 & 0 \\[2pt]
0 & 1 & 0 \\ 
1 & 0 & 1
\end{bmatrix}\cong
\begin{bmatrix}
1 & 0 & 0 \\[2pt]
0 & 1 & 0 \\ 
0 & 1 & 1
\end{bmatrix}=A^{(0,1)},
$$
the associated NL posets are isomorphic. Thus we obtain three 
non-isomorphic posets from the vector extensions of $A$:

\usetikzlibrary{positioning}
\tikzset{
  pn/.style={circle,inner sep=1.5pt,draw,fill=black},
  newn/.style={circle,inner sep=1.5pt,draw,fill=red!12},
  hasse/.style={line width=0.6pt},          
  redge/.style={line width=1pt,red},        
}
\begin{center}
\begin{tikzpicture}[scale=0.7]
    \node[pn] (x0) at (0,0) {};
    \node[pn] (x1) at (1,0) {};
    \node[pn] (x2) at (2,0) {};

    \node[pn] (y0) at (5,0) {};
    \node[pn] (y1) at (6,0) {};
    \node[pn] (y2) at (6,1) {};
    \draw[hasse] (y1) -- (y2);

    \node[pn] (z0) at (9,0) {};
    \node[pn] (z1) at (11,0) {};
    \node[pn] (z2) at (10,1) {};
    \draw[hasse] (z0) -- (z2);
    \draw[hasse] (z1) -- (z2);
\end{tikzpicture}
\end{center}

This simple example illustrates that although every poset vector in 
$\textsf{PV}_n(A)$ generates a new NL poset on $X_{n+1}$, 
many of these NL posets may be isomorphic. This observation raises several structural, enumerative, and algorithmic questions 
about the $v$-extensions of a poset matrix $A \in \mathcal{PM}(n)$.
\begin{itemize}
\item [Q1.] Can all admissible poset vectors in $\mathsf{PV}_n(A)$ be generated by an efficient algorithm?
  \item [Q2.] For which pairs $v,w\in\mathsf{PV}_n(A)$ are the corresponding 
posets $P_{A^v}$ and $P_{A^w}$ isomorphic?  
Can we describe the action of the automorphism group ${\rm Aut}(A)$ on $\mathsf{PV}_n(A)$ 
so that distinct isomorphism types correspond to its orbits?
  \item[Q3.] How many non-isomorphic $v$-extensions arise from all 
$v\in\mathsf{PV}_n(A)$?  
Equivalently, what is the cardinality of the orbit set 
$\mathsf{PV}_n(A)/{\rm Aut}(A)$?
\item [Q4.] Can the enumeration of $\mathcal{NL}(n+1)$ be recursively obtained 
from that of $\mathcal{NL}(n)$ by summing over all distinct 
isomorphism types of $v$-extensions?
\end{itemize}

The next several sections are devoted to investigating these questions.

\section{Generating poset vectors from a sieve algorithm}

In combinatorial enumeration, it is often more efficient to exclude forbidden configurations than to construct admissible objects directly. 
This idea is the basis of classical sieve methods and the inclusion-exclusion principle. 
Motivated by this viewpoint, we develop a sieve algorithm for generating all admissible poset vectors associated with a given poset matrix.

Let $A\in\mathcal{PM}(n)$ be a poset matrix. By \cite{CKLW}, every poset matrix $A$ appears as an induced principal submatrix of the binary Pascal matrix ${\bf P}_{2^n}$. 
More precisely, there exists a unique set
$$
\alpha=\{\alpha_0,\ldots,\alpha_{n-1}\}\subseteq [0,2^n-1]
$$
such that $A={\bf P}_{2^n}[\alpha]$, the principal submatrix of ${\bf P}_{2^n}$ indexed by $\alpha_0,\ldots,\alpha_{n-1}$,  where the indices satisfy
$$
A(2^0,\ldots,2^{n-1})^T=(1,\alpha_1,\ldots,\alpha_{n-1})^T,
\quad 2^i\le \alpha_i\le 2^{i+1}-1.
$$
We call $\alpha_i$ the {\it Pascal index at level $i$} of $A$, and write $\alpha=\alpha(A)$. 
From $\alpha_i=\sum_{j=0}^{i} A_{ij}2^j$, it follows that $\alpha_i$ encodes the support of $i$th row of $A$, denoted by $S(\alpha_i):=\{j\le i : A_{ij}=1\}$,
which corresponds to the principal ideal $\downarrow i=\{ j\in P_A \mid j\preceq i\}$ in the associated NL poset $P_A$.

Now consider a $v$-extension $A^v$ of $A$, where $v=(v_0,\ldots,v_{n-1})\in\mathbb{B}^n$. Define the {\it binary index} of $v$ by
$$
b(v):=\sum_{i=0}^{n-1} v_i2^i \;=\;\sum_{i\;\in\; \mathrm{supp}(v)}2^i,\quad 0\le b(v)\le 2^n-1.
$$
This establishes a bijection between $\mathbb{B}^n$ and $[0,2^n-1]$. 
Under this identification, the poset vectors of $A$ correspond precisely to those integers $b(v)$ that satisfy the admissibility constraints.
Hence, the problem of determining whether $v$ is a poset vector of $A$ can be reformulated as deciding whether the integer $b(v)$ satisfies certain constraints induced by the Pascal indices  $\alpha=\{\alpha_0,\ldots,\alpha_{n-1}\}$ such that $A={\bf P}_{2^n}[\alpha]$. 
These constraints are determined by the binomial structure of the Pascal matrix. Thus, the compatibility of a candidate index $b(v)$ with the poset structure is determined by binomial coefficients modulo $2$. 
A violation occurs precisely when a coefficient of the form $\binom{j}{\alpha_i}$ vanishes modulo $2$, indicating that the corresponding relation is forbidden.
Therefore, instead of constructing admissible vectors directly, it is natural to encode all such forbidden configurations. 

For this purpose, we introduce the family of matrices
$$
{\cal F}_{2^i}:=\big({\bf P}_{2^i}^T\big)^C
=J-{\bf P}_{2^i}^T,\quad 0\le i\le n-1,
$$
where $J$ is the all-ones matrix. By construction, for all $(r,j)$-elements where $0\le r,j\le 2^i-1$,
$$
({\cal F}_{2^i})_{r,j}=1
\;\Longleftrightarrow\;
\binom{j}{r}\equiv 0 \pmod{2}.
$$
Thus, while ${\bf P}_{2^i}$ encodes admissible relations, its transpose complement ${\cal F}_{2^i}$ determines exactly the forbidden configurations that must be excluded in the sieve process.
For example,
$$
{\cal F}_{2^0}=[0],\quad {\cal F}_{2^1}=\left[\begin{array}{cc} 0&0\\
1&0
\end{array}\right],\quad {\cal F}_{2^2}=\left[\begin{array}{cc|cc} 0&0&0&0\\1&0&1&0\\
\hline 1&1&0&0\\1&1&1&0\end{array}\right].
$$

\medskip

Forming the block diagonal matrix $\mathcal F$ with row and column indices $1,\ldots,2^n-1$ defined by
\[
\mathcal{F}:={\cal F}_{2^0}\oplus{\cal F}_{2^1}\oplus\cdots\oplus{\cal F}_{2^{n-1}},
\]
we obtain a unified representation of forbidden configurations across all levels $i=0,\ldots,n-1$ for the Pascal index $\alpha_i\in [2^i,2^{i+1}-1]$. Denote by $Y_r$ the support of $r$th row of $\mathcal{F}$, that is,
$$
Y_r=\{ j\in[1,2^n-1]\mid \mathcal{F}_{r,j}=1\},\quad r=1,\ldots,2^n-1.
$$
These sets encode exactly the forbidden residue classes that must be removed in the sieve algorithm.

\begin{proposition}\label{YY} Let \(A\in\mathcal{PM}(n)\) and let \(\alpha(A)=\{\alpha_0,\alpha_1,\ldots,\alpha_{n-1}\}\) be its Pascal index set.
Then $Y_{\alpha_0}=\emptyset$ and for $i\ge1$,
\begin{eqnarray}\label{Y}
Y_{\alpha_i}=\left\{j\in [2^i,2^{i+1}-1]\mid {j\choose \alpha_i}\equiv 0\; ({\rm mod}\;2)\right\}.
\end{eqnarray}
\end{proposition}
\begin{proof} Since $\alpha_0=1$, clearly $Y_{\alpha_0}=\emptyset$. Let $i\ge1$. Since $\alpha_i \in [2^i,2^{i+1}-1]$, it follows that $Y_{\alpha_i}\subseteq [2^i,2^{i+1}-1]$. Using (\ref{Lucas}) we have
$$
\mathcal F_{\alpha_i,j}
=1-({\bf P}_{2^i}^T)_{\alpha_i,j}
=1-\binom{j}{\alpha_i}\ (\mathrm{mod}\ 2),\quad j\in [2^i,2^{i+1}-1].
$$
Thus, $\mathcal F_{\alpha_i,j}=1$ if and only if $\binom{j}{\alpha_i}\equiv 0 \pmod{2}$. Therefore, we obtain (\ref{Y}) as required.
\end{proof}

The above proposition provides an explicit description of the forbidden sets $Y_{\alpha_i}$ in terms of binomial coefficients modulo $2$. 
This characterization allows us to translate the poset admissibility condition into modular constraints on the binary index $b(v)$. 
Consequently, the problem of determining poset vectors can be reformulated as a sieve problem on the interval $[0,2^n-1]$, where at each level $i$ one excludes the residue classes belonging to $Y_{\alpha_i}$.

\begin{theorem}[Sieve Theorem]\label{main}
Let $A\in\mathcal{PM}(n)$ and let $\alpha(A)=\{\alpha_0,\ldots,\alpha_{n-1}\}$ be its Pascal index set such that $A={\bf P}_{2^n}[\alpha]$. 
Let $v\in\mathbb{B}^n$ with binary index $b(v)\in[0,2^n-1]$. Then $v$ is a poset vector of $A$ if and only if for each $i=1,\ldots,n-1$,
$$
b(v)\bmod 2^{i+1}\notin Y_{\alpha_i}
$$
Equivalently, the set of binary indices of all admissible poset vectors of $A$ is given by
$$
\left\{\,x\in[0,2^n-1]\;\middle|\;
x\bmod 2^{i+1}\notin Y_{\alpha_i}
\text{ for some}\;\; i=1,\ldots,n-1
\right\},
$$
which is obtained by removing from $[0,2^n-1]$ all integers $x$ such that $x\bmod 2^{i+1}\in Y_{\alpha_i}$ for each $i=1,\ldots,n-1$.
\end{theorem}
\begin{proof} Let $A\in\mathcal{PM}(n)$ such that $A={\bf P}_{2^n}[\alpha]$ where $\alpha(A)=\{\alpha_0,\ldots,\alpha_{n-1}\}$.
Now let $v=(v_0,\ldots,v_{n-1})\in\mathbb{B}^n$ and let $x=b(v)\in[0,2^n-1]$ be its binary index. That is, $v$ identifies with $x$ via its binary expansion
$$
x=\sum_{j=0}^{n-1} v_j2^j.
$$
By Theorem \ref{ppp}, $v$ is a poset vector of $A$ if and only if $\mathrm{supp}(v)$ forms an order ideal in the associated poset $P_A$. 
Since $A={\bf P}_{2^n}[\alpha]$ with $\alpha=\{\alpha_0,\ldots,\alpha_{n-1}\}$, the order relations in $P_A$ are encoded by 
$$
i \preceq j \quad \Longleftrightarrow \quad \binom{\alpha_j}{\alpha_i}\equiv 1 \pmod{2}.
$$
Hence, the compatibility between $x$ and the existing elements $\alpha_i$ is determined by $\binom{x}{\alpha_i}\pmod{2}$. It implies that
$$
v_i=1\quad \Longleftrightarrow \quad \binom{x}{\alpha_i}\equiv 1\pmod{2}.
$$
Since $\alpha_i <2^{i+1}$ it follows from Lucas theorem \cite{ELU} that
$$
\binom{x}{\alpha_i}\equiv \binom{x\bmod 2^{i+1}}{\alpha_i}\pmod{2}.
$$
Thus, the compatibility condition at level $i$ depends only on $x\bmod 2^{i+1}$. By Proposition \ref{YY}, the set $Y_{\alpha_i}$ consists precisely of those residues that violate the admissibility condition at level $i$. 
Hence, a violation occurs at level $i$ if and only if
$$
x\bmod 2^{i+1}\in Y_{\alpha_i}.
$$
Therefore, $v$ is a poset vector of $A$ if and only if no violation occurs at any level, that is,
$$
x\bmod 2^{i+1}\notin Y_{\alpha_i}
\quad \text{for all } i=1,\ldots,n-1.
$$
Equivalently, starting from the full set $[0,2^n-1]$, we remove all integers $x$ that violate at least one level condition, namely those satisfying
$$
x\bmod 2^{i+1}\in Y_{\alpha_i}
\quad \text{for some } i=1,\ldots,n-1.
$$
The remaining integers are those that satisfy all level-wise constraints, and hence correspond bijectively to the admissible poset vectors of $A$.
This completes the proof.
\end{proof}

\begin{example}{\rm
Let 
$$
A=\begin{bmatrix}
1&0&0&0\\
1&1&0&0\\
1&0&1&0\\
1&1&1&1
\end{bmatrix}\in\mathcal{PM}(4).
$$
Then $\alpha(A)=\{1,3,5,15\}$. By (\ref{Y}), we obtain $Y_{\alpha_1=3}=\{2\}$, $Y_{\alpha_2=5}=\{4,6\}$, $Y_{\alpha_3=15}=\{8,9,\ldots,14\}$.
Let $x=b(v)\in [0,15]$. We remove from $[0,15]$ all integers satisfying
$$
x \equiv 2 \pmod{2^2},\quad
x \equiv 4,6 \pmod{2^3},\quad
x \equiv 8,\ldots,14 \pmod{2^4}.
$$
Applying the sieve step by step, we obtain
$$
\begin{aligned}
[0,15]
&\longrightarrow \{0,1,{\cancel 2},3,4,5,{\cancel 6},7,8,9,{\cancel 10},11,12,13,{\cancel 14},15\}\\
&\longrightarrow \{0,1,3,{\cancel 4},5,7,8,9,11,{\cancel 12},13,15\}\\
&\longrightarrow \{0,1,3,5,7,{\cancel 8},{\cancel 9},{\cancel 11},{\cancel 13},15\}.
\end{aligned}
$$
Hence the admissible binary indices are $\{0,1,3,5,7,15\}$. These correspond to the poset vectors in ${\mathbb B}^4$:
$$
(0,0,0,0),\ (1,0,0,0),\ (1,1,0,0),\ (1,0,1,0),\ (1,1,1,0),\ (1,1,1,1).
$$
}
\end{example}

\section{Enumeration through the automorphism groups}

In this section, we study how the action of the automorphism group ${\rm Aut}(A)$ on the poset vector space $\mathsf{PV}_n(A)$ determines the structure of $v$-extensions and leads to a recursive enumeration of $\mathcal{NL}(n+1)$.

Let $\mathfrak{S}_n$ denote the set of all permutations on $X_n$, and $\mathfrak{P}_n$ the set of all $n \times n$ permutation matrices.
Since an automorphism of a poset $P=(X_n,\preceq)$ is a bijection $X_n\to X_n$ that preserves the partial order,
the automorphism group of $P\in\mathcal{NL}(n)$ is defined by
$$
\operatorname{Aut}(P) := \{\sigma \in \mathfrak{S}_n \mid i \preceq j\; \text{ if and only if }\;\sigma(i) \preceq \sigma(j)\}.
$$
Accordingly, the automorphism group of a poset matrix $A\in {\cal PM}(n)$ is defined by
$$
\operatorname{Aut}(A) := \{Q\in \mathfrak{P}_n \mid Q^T A Q = A\}.
$$
Clearly, $\mathrm{Aut}(P)\cong{\rm Aut}(A_P)$.
\medskip

Let $G:=\mathrm{Aut}(A)$. Consider the automorphism group $G$ acting on $\textsf{PV}_n(A)$. Two poset vectors $v,w\in\textsf{PV}_n(A)$ are called {\it $G$-equivalent} if $vQ=w$ for some $Q\in G$. The relation of $G$-equivalence is an equivalence relation, and its equivalence class is called the {\it $G$-orbit}. In particular, the $G$-orbit of $v\in \textsf{PV}_n(A)$ is
$$
{\cal O}_A(v):=v\cdot G=\big\{vQ\mid Q\in G\big\}.
$$ 
By Lagrange's theorem, 
$$
|{\cal O}_A(v)|=|G|/|G_v|
$$
where $G_v=\{Q\in G\mid vQ=v\}$ is the stabilizer subgroup of $G$.

\begin{theorem}\label{class}
Let $A\in\mathcal{PM}(n)$ and let $v,w\in \textsf{PV}_n(A)$. Then $A^v\cong A^w$ if and only if $v$ and $w$ are $G$-equivalent, equivalently $w\in {\cal O}_A(v)$. \end{theorem}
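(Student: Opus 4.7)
The plan is to prove the two implications separately, with $(\Leftarrow)$ following from a direct block matrix computation and $(\Rightarrow)$ requiring a more careful analysis of the underlying isomorphism.

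For $(\Leftarrow)$, suppose $Q\in G$ satisfies $vQ=w$. I would lift $Q$ to the block permutation matrix $\widetilde{Q}:=\begin{bmatrix} Q & 0 \\ 0 & 1 \end{bmatrix}\in\mathfrak{P}_{n+1}$ and compute directly
$$
\widetilde{Q}^T A^v \widetilde{Q}
\;=\;
\begin{bmatrix} Q^T A Q & 0 \\ vQ & 1 \end{bmatrix}
\;=\;
\begin{bmatrix} A & 0 \\ w & 1 \end{bmatrix}
\;=\; A^w,
$$
using $Q^T A Q=A$ and $vQ=w$. Hence $A^v\cong A^w$.

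For $(\Rightarrow)$, suppose $R^T A^v R = A^w$ for some $R\in\mathfrak{P}_{n+1}$, and let $\sigma\in\mathfrak{S}_{n+1}$ denote the associated permutation, viewed as a poset isomorphism $P_{A^v}\to P_{A^w}$. The key observation is that the last column of each of $A^v$ and $A^w$ has a single $1$ on the diagonal, so $n$ is a maximal element in both posets. In the clean case $\sigma(n)=n$, the restriction $\sigma|_{X_n}$ is an automorphism of $P_A$, corresponding to some $Q\in G$; moreover, $\sigma$ carries $\downarrow_{A^v}n=\mathrm{supp}(v)\cup\{n\}$ bijectively onto $\downarrow_{A^w}n=\mathrm{supp}(w)\cup\{n\}$ while fixing $n$, so $\sigma(\mathrm{supp}(v))=\mathrm{supp}(w)$, which translates to $vQ=w$.

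The main obstacle is the case $\sigma(n)=m\neq n$. My plan is to normalize $R$ by right-multiplication: for any $S\in\operatorname{Aut}(A^w)$, the product $RS$ still realizes the isomorphism since $(RS)^T A^v (RS) = S^T A^w S = A^w$. It therefore suffices to exhibit $S$ with $\sigma_S(m)=n$, i.e., to show that $m$ and $n$ lie in the same $\operatorname{Aut}(P_{A^w})$-orbit. I would establish this by combining two facts: $\sigma$ maps $\downarrow_{A^v} n$ isomorphically onto $\downarrow_{A^w} m$, so $m$ and $n$ have isomorphic principal down-sets in $P_{A^w}$; and both $A^v$ and $A^w$ extend the same base matrix $A$ on $X_n$ through the natural labeling, which forces $m$ and $n$ to play interchangeable structural roles. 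Via the bijection $\mathrm{supp}$ of Theorem~\ref{ppp}, the whole statement rephrases cleanly as: two ideals $J, J'\in\SetIdeals(P_A)$ yield isomorphic extensions iff $J'=\sigma(J)$ for some $\sigma\in\operatorname{Aut}(P_A)$, which is the combinatorial shadow of the matrix identity $vQ=w$ and the conceptually clearest route to finishing the argument.
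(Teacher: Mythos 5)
Your proof of the direction $(\Leftarrow)$ is exactly the paper's proof and is correct. The problem is in $(\Rightarrow)$, and you have correctly identified the crux: the isomorphism $\hat Q$ need not send the last index to itself, so you must normalize. But your normalization step is a genuine gap. You assert that ``both $A^v$ and $A^w$ extend the same base matrix $A$ on $X_n$ \ldots which forces $m$ and $n$ to play interchangeable structural roles,'' and that $m$ and $n$ having isomorphic principal down-sets implies they lie in the same $\operatorname{Aut}(P_{A^w})$-orbit. Neither of these is a valid inference: in a poset, two elements can have isomorphic principal down-sets (and even have isomorphic vertex-deleted subposets) without being related by any automorphism. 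The piece that is actually needed --- that there exists some isomorphism $P_{A^v}\to P_{A^w}$ fixing $n$ --- is precisely what remains to be shown, and your argument does not supply it. Concretely, if $\sigma(n)=m\in X_n$ and $\sigma^{-1}(n)=m'\in X_n$, what one gets is an isomorphism $\psi:P_A-m'\to P_A-m$ together with the identities $\sigma(\mathrm{supp}(v))=D_{P_A}(m)$ and $\sigma(D_{P_A}(m'))=\mathrm{supp}(w)$; the putative map $\tau$ with $\tau|_{X_n\setminus\{m'\}}=\psi$, $\tau(m')=m$ is an automorphism of $P_A$ only when $\psi(D_{P_A}(m'))=D_{P_A}(m)$, i.e.\ only when $\mathrm{supp}(w)=D_{P_A}(m)$, which is an extra hypothesis and not given.

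For what it is worth, the paper's own proof of $(\Rightarrow)$ has the very same hole, just phrased differently: it claims that in $A^v$ ``the last column is the unique column whose entries above the diagonal are all zero'' and deduces that $\hat Q$ must fix the last index. But $A^v$ is lower triangular, so \emph{every} column has all entries above the diagonal equal to zero; the intended uniqueness fails already for $A^v=I_{n+1}$, where any permutation of the three indices of $I_3$, say, conjugates $I_3$ to itself without fixing the last index. So the paper does not actually justify the reduction to $\hat Q = \begin{bmatrix} Q & 0\\ 0 & 1\end{bmatrix}$ either. A correct proof of $(\Rightarrow)$ must show that whenever $P_{A^v}\cong P_{A^w}$ there exists an isomorphism \emph{fixing} $n$ --- this is a non-trivial ``over-$P_A$'' rigidity statement, and it does not follow simply from $n$ being a maximal element or from the two extensions sharing $A$. (In fact one can attempt a counterexample: take a poset $R$ whose automorphism group is $\mathbb{Z}_3=\langle\psi\rangle$, an antichain $S$ with $\psi(S)\neq S$, let $P_A=R\cup\{m\}$ with $m$ covering $S$; then with $J=\downarrow\psi^{-1}(S)$ and $J'=\downarrow\psi(S)$, the map sending $n\leftrightarrow m$ and applying $\psi$ on $R$ is an isomorphism $P_A^{J}\to P_A^{J'}$, while $\operatorname{Aut}(P_A)$ is trivial and $J\neq J'$.) So the missing step is not merely a loose end that can be waved away; closing it requires either a genuinely new argument or an additional hypothesis on $A$, and neither your proposal nor the paper supplies one.
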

\begin{proof} Let $w\in {\cal O}_A(v)$. Then there exists a $Q\in G$ such that $w=vQ$. Let $\hat Q$ be the permutation matrix in $\mathfrak{P}_{n+1}$ of the form  $\hat Q=\begin{bmatrix} Q & 0\\ 0 & 1\end{bmatrix}$. Then
\begin{eqnarray*}\label{QAQ1}
\hat Q^T\, A^v\, \hat Q =
\begin{bmatrix}
Q^TAQ & 0\\ vQ & 1
\end{bmatrix}
=
\begin{bmatrix}
A & 0\\ vQ & 1
\end{bmatrix}
= A^{vQ}=A^w,
\end{eqnarray*}
which shows $A^v\cong A^{w}$. 

Conversely, assume $A^v\cong A^{w}$. Then there exists a permutation matrix 
$\hat Q$ of order $n+1$ such that $\hat Q^T A^v\hat Q = A^{w}$. In any $v$-extension, the last column is the unique column whose entries above the diagonal are all zero. Thus $\hat Q$ must fix the last index, so we may assume for some $Q\in \mathfrak{P}_n$,
$$
\hat Q=\begin{bmatrix} Q & 0\\ 0 & 1\end{bmatrix}.
$$
It follows from $\hat Q^T A^v \hat Q = A^{w}$ that $Q^T A Q=A$ and $w=vQ$. Hence $w\in {\cal O}_A(v)$, which completes the proof.
\end{proof}

 The set of all $G$-orbits is denoted by
$$
\textsf{PV}_n(A)/G:=\big\{ {\cal O}_A(v)\mid v\in\textsf{PV}_n(A)\big\}.
$$ 

\begin{corollary}\label{PVn} Let $A\in\mathcal{PM}(n)$. Then $|\textsf{PV}_n(A)/G|$ is equal to the number of posets in ${\cal NIP}(n+1)$ obtained from $v$-extensions of $A$.
\end{corollary}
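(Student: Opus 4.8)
The plan is to read this off Theorem~\ref{class} once we know that isomorphism of poset matrices is the same thing as isomorphism of the underlying NL posets. First I would fix the meaning of the statement: by Theorem~\ref{ppp}, the $v$-extensions of $A$ are precisely the matrices $A^v$ with $v\in\textsf{PV}_n(A)$, each lying in $\mathcal{PM}(n+1)$ and hence corresponding to an NL poset $P_{A^v}$ on $X_{n+1}$. Thus ``the number of posets in ${\cal NIP}(n+1)$ obtained from $v$-extensions of $A$'' means the number of isomorphism classes in the family $\{P_{A^v}\mid v\in\textsf{PV}_n(A)\}$.

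Next I would check that, for $v,w\in\textsf{PV}_n(A)$, one has $P_{A^v}\cong P_{A^w}$ if and only if $A^v\cong A^w$. The matrix-to-poset direction is the fact already recorded in the paper (if $\hat Q^{T}B\hat Q=C$ then $P_{B}\cong P_{C}$). For the converse, an isomorphism $\theta\colon P_{A^v}\to P_{A^w}$ is a permutation of $X_{n+1}$, and its permutation matrix $\hat Q\in\mathfrak{P}_{n+1}$ satisfies $\hat Q^{T}A^v\hat Q=A^w$, so $A^v\cong A^w$. Combining this equivalence with Theorem~\ref{class}, which asserts $A^v\cong A^w$ iff $w\in{\cal O}_A(v)$, we get that $P_{A^v}\cong P_{A^w}$ holds exactly when $v$ and $w$ lie in the same $G$-orbit.

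Finally I would assemble the bijection. The assignment sending $v$ to the isomorphism class of $P_{A^v}$ is constant on each $G$-orbit (by the ``if'' direction), so it descends to a well-defined map from $\textsf{PV}_n(A)/G$ to the set of isomorphism types of posets on $X_{n+1}$; it is surjective onto the isomorphism types obtained from $v$-extensions of $A$ by construction, and injective by the ``only if'' direction. Picking one representative in ${\cal NIP}(n+1)$ per type, the count of such posets equals $|\textsf{PV}_n(A)/G|$, as claimed.

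I do not anticipate a genuine obstacle: the corollary is essentially Theorem~\ref{class} rephrased in the language of orbit enumeration. The only point needing (routine) care is the translation between conjugacy of poset matrices by permutation matrices and isomorphism of the associated NL posets, together with the standard verification that the induced map on $G$-orbits is well defined, surjective, and injective.
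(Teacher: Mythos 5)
Your proposal is correct and takes the same approach the paper intends: the corollary is stated without a separate proof because it is an immediate consequence of Theorem~\ref{class}, together with the routine equivalence between poset-matrix isomorphism (conjugation by a permutation matrix) and isomorphism of the corresponding NL posets. You supply exactly those routine details, and the argument is sound.
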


Let $\mathsf{NIP}_{n+1}(A)\subseteq {\cal NIP}(n+1)$ be the set of all non-isomorphic posets obtained from $v$–extensions of $A\in{\cal PM}(n)$.
Then it follows from Corollary \ref{PVn} that 
\begin{eqnarray*}\label{Birknumber}
|{\mathcal{NIP}}(n+1)|=\sum_{A\in{\cal PM}(n)} |{\mathsf{NIP}}_{n+1}(A)|=\sum_{A\in{\cal PM}(n)}|\textsf{PV}_n(A)/G|.
\end{eqnarray*}
This shows that an answer of the Birkhoff's question can be recursively obtained from $|\textsf{PV}_n(A)/G|$. By Burnside’s counting theorem, 
\begin{equation*}\label{burnside}
|\textsf{PV}_n(A)/G|\;=\;{1\over |G|}\sum_{Q\in G}{\rm{Fix}}(Q).
\end{equation*}
where ${\rm{Fix}}(Q)$ is the number of poset vectors $v$ in $\textsf{PV}_n(A)$ fixed by $Q\in G$, i.e., $vQ=v$.
\medskip

If $G$ is the identity group $\{{\rm id}\}$, then $G$ is said to be {\it trivial}.  
By Theorem \ref{class}, all $v$-extensions of $A$ are pairwise non-isomorphic when $A$ has trivial
automorphism group. Thus we have the following corollary.
\begin{corollary}
Let $A\in\mathcal{PM}(n)$. If $Aut(A)$ is trivial, then 
$$
|\mathsf{NIP}_{n+1}(A)|=|\mathsf{NL}_{n+1}(A)|.
$$
\end{corollary}

Understanding when a poset matrix $A$ has trivial automorphism group is of fundamental importance. Since $\mathrm{Aut}(A) \cong \mathrm{Aut}(P_A)$, it follows from Theorem \ref{class} that $\mathrm{Aut}(A)$ is trivial if and only if
the only order-preserving bijection on $P_A$ is the identity permutation.

The following theorems give sufficient conditions for this property.

\begin{theorem}\label{supp} Let $A\in\mathcal{PM}(n)$. If all poset vectors in $\textsf{PV}_n(A)$ have pairwise distinct support sizes, then $\mathrm{Aut}(A)$ is trivial.
\end{theorem}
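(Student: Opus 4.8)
The plan is to translate the hypothesis entirely into a statement about order ideals of $P_A$ and then exploit the fact that a principal order ideal remembers its generator. First I would reformulate the assumption: by Theorem~\ref{ppp} the map $\mathrm{supp}$ is a bijection from $\mathsf{PV}_n(A)$ onto the set $\SetIdeals(P_A)$ of order ideals of $P_A$, and the support size of a poset vector $v$ is exactly the cardinality of the order ideal $\mathrm{supp}(v)$. Hence ``all poset vectors in $\mathsf{PV}_n(A)$ have pairwise distinct support sizes'' is equivalent to saying that any two distinct order ideals of $P_A$ have different cardinalities; in other words, an order ideal of $P_A$ is determined uniquely by its size.

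Next I would take an arbitrary $\sigma\in\mathrm{Aut}(P_A)$ and aim to show $\sigma=\mathrm{id}$. For each $x\in X_n$, the automorphism $\sigma$ restricts to a bijection $\downarrow x\to\ \downarrow\sigma(x)$, because $y\preceq x$ if and only if $\sigma(y)\preceq\sigma(x)$; consequently $|\!\downarrow x|=|\!\downarrow\sigma(x)|$. Both $\downarrow x$ and $\downarrow\sigma(x)$ are principal (hence ordinary) order ideals of $P_A$ of the same cardinality, so by the reformulated hypothesis they coincide: $\downarrow x=\ \downarrow\sigma(x)$. Finally, I would invoke the elementary fact that $x$ is the greatest element of $\downarrow x$, so $\downarrow x=\ \downarrow\sigma(x)$ forces $x=\sigma(x)$. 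As $x$ was arbitrary, $\sigma$ is the identity, whence $\mathrm{Aut}(P_A)$ is trivial, and since $\mathrm{Aut}(A)\cong\mathrm{Aut}(P_A)$, so is $\mathrm{Aut}(A)$.

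I do not expect a genuine obstacle here; the only point requiring care is the bookkeeping in the first step, namely matching ``support size of a poset vector'' with ``cardinality of the corresponding order ideal'' via Theorem~\ref{ppp}. If one prefers a matrix-flavoured phrasing, one can note that each row $r_i$ of $A$ is itself a poset vector (apply Theorem~\ref{thm333} with $S=\{i\}$, so that $A[S]=I_1$), so the hypothesis already pins down each down-set $\downarrow i$ by its size; this makes the reduction in the first paragraph completely explicit. Everything after that is immediate.
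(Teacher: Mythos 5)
Your argument is correct, and it follows a genuinely different and somewhat cleaner route than the paper's. The paper's proof invokes Theorem~\ref{class}: it observes that two distinct poset vectors $v\neq w$ of different support sizes cannot satisfy $w=vQ$ for any $Q\in\mathrm{Aut}(A)$, and from the fact that every $G$-orbit in $\mathsf{PV}_n(A)$ is a singleton it concludes (somewhat tersely) that $\mathrm{Aut}(A)$ is trivial; to make that last step fully explicit one would still need to note, say, that each row $r_i$ is a poset vector, so $r_iQ=r_i$ for all $i$ gives $AQ=A$, and since a lower unitriangular matrix has pairwise distinct columns this forces $Q=I$. Your proof sidesteps Theorem~\ref{class} entirely: you translate the hypothesis via Theorem~\ref{ppp} into ``order ideals of $P_A$ are determined by their cardinality,'' observe that any $\sigma\in\mathrm{Aut}(P_A)$ sends $\downarrow x$ bijectively onto $\downarrow\sigma(x)$ (so these two principal ideals have equal size, hence coincide), and then use the elementary fact that $x$ is recovered as the greatest element of $\downarrow x$ to force $\sigma(x)=x$ pointwise. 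What your approach buys is that the triviality of the automorphism group is established directly and explicitly, with no appeal to the $v$-extension isomorphism machinery; what the paper's approach buys is that it stays within the framework of $G$-orbits on poset vectors, which is the unifying theme of that section. Both are valid; yours is the more self-contained and, I think, the more transparent of the two.
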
 
\begin{proof} Let $v,w\in \textsf{PV}_n(A)$. By Theorem \ref{class}, $A^v\cong A^w$ if and only if $Q^T A Q=A$ and $w=vQ$ for some  $Q\in \mathfrak{P}_n$. Thus, if $|{\rm supp}(v)|\neq |{\rm supp}(w)|$ for all pairs $(v,w)$ with $v\ne w$ in $\textsf{PV}_n(A)$ then there is no permutation $Q\in {\rm Aut}(A)$ such that $w=vQ$. 
\end{proof}

\begin{theorem}\label{thm36} Let $A\in\mathcal{PM}(n)$ be a poset matrix with $i$th row sum $r_i(A)$ and $i$th column sum $c_i(A)$.
If the pairs $\bigl(r_i(A),\,c_i(A)\bigr)$ are distinct for all $i\in[n]$, then $\mathrm{Aut}(A)$ is trivial.
\end{theorem}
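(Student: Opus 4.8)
The plan is to transfer the question to the poset $P_A$ via the isomorphism $\mathrm{Aut}(A)\cong\mathrm{Aut}(P_A)$ recorded above, so that it suffices to prove that every order-preserving bijection $\sigma$ of $P_A$ is the identity. The starting point is to read off the combinatorial meaning of the row and column sums from \eref{pm}: the $i$th row of $A$ carries a $1$ in column $j$ precisely when $j\preceq i$, so $r_i(A)=|{\downarrow i}|$; dually, the $i$th column carries a $1$ in row $j$ precisely when $i\preceq j$, so $c_i(A)=|{\uparrow i}|$, where $\uparrow i:=\{y\in P_A\mid i\preceq y\}$.

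Next I would observe that any $\sigma\in\mathrm{Aut}(P_A)$ restricts, for each fixed $i$, to a bijection $\downarrow i\to\downarrow\sigma(i)$ and to a bijection $\uparrow i\to\uparrow\sigma(i)$, since $y\preceq i\iff\sigma(y)\preceq\sigma(i)$ and $i\preceq y\iff\sigma(i)\preceq\sigma(y)$. Hence $\sigma$ preserves both cardinalities, which by the previous step reads
$$\bigl(r_i(A),\,c_i(A)\bigr)=\bigl(r_{\sigma(i)}(A),\,c_{\sigma(i)}(A)\bigr)\qquad\text{for all }i\in[n].$$
Under the hypothesis that the pairs $(r_i(A),c_i(A))$ are pairwise distinct, this forces $\sigma(i)=i$ for every $i$, so $\sigma=\mathrm{id}$ and $\mathrm{Aut}(A)\cong\mathrm{Aut}(P_A)$ is trivial.

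There is no real obstacle here; the only points demanding care are the correct interpretation of the indexing convention in \eref{pm} — so that $r_i(A)$ and $c_i(A)$ are genuinely the sizes of $\downarrow i$ and $\uparrow i$ rather than the other way around — and the routine check that an order isomorphism preserves these down-set and up-set sizes. If one prefers to stay entirely within matrix language, the same argument runs as follows: writing $\mathbf 1$ for the all-ones column vector, every $Q\in\mathfrak P_n$ satisfies $Q\mathbf 1=\mathbf 1$, so from $Q^T A Q=A$ and its transpose $Q^T A^T Q=A^T$ one gets $Q^T(A\mathbf 1)=A\mathbf 1$ and $Q^T(A^T\mathbf 1)=A^T\mathbf 1$; thus $Q^T$ fixes both the row-sum vector and the column-sum vector of $A$, making the pair of $i$th coordinates constant along each cycle of $Q$, and distinctness of these pairs then forces $Q=I_n$. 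I also note that this condition is logically independent of the one in Theorem~\ref{supp}, since the latter concerns the sizes of all order ideals of $P_A$ rather than the degree pairs of its elements.
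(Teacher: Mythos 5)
Your proposal is correct and takes essentially the same approach as the paper: both rest on the observation that any $Q\in\mathrm{Aut}(A)$ (equivalently, any order automorphism $\sigma$ of $P_A$) forces $\bigl(r_i(A),c_i(A)\bigr)=\bigl(r_{\sigma(i)}(A),c_{\sigma(i)}(A)\bigr)$, so distinctness of the pairs pins down $\sigma=\mathrm{id}$. The paper phrases this directly at the matrix level via $A_{ij}=(Q^TAQ)_{\sigma(i),\sigma(j)}$, while your primary argument passes through the poset interpretation $r_i=|{\downarrow i}|$, $c_i=|{\uparrow i}|$; your concluding matrix remark with $Q\mathbf 1=\mathbf 1$ is essentially the paper's proof restated.
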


\begin{proof}
Let $Q\in\mathrm{Aut}(A)$. Then the map $A\mapsto Q^TAQ$ yields 
 $A_{ij}=(Q^TAQ)_{\sigma(i),\,\sigma(j)}$ for all $i,j\in[n]$. Thus  
$$
r_i(A)=r_{\sigma(i)}(Q^TAQ)\quad {\text{and}}\quad c_i(A)=c_{\sigma(i)}(Q^TAQ).
$$
Since $Q^TAQ=A$ it follows that for all $i\in X_n$,
$$
\bigl(r_i(A),\,c_i(A)\bigr)\;=\;\bigl(r_{\sigma(i)}(A),\,c_{\sigma(i)}(A)\bigr).
$$
By hypothesis, the pairs $\bigl(r_i(A),c_i(A)\bigr)$ are distinct for all $i\in[n]$, so necessarily
$\sigma(i)=i$ for all $i$. Thus $Q=I_n$, and $\mathrm{Aut}(A)$ is trivial, which completes the proof.
\end{proof}
\medskip

Recall that $\downarrow i=\{j\in P\mid j \preceq i\}$ is the down set of $i\in P$. Accordingly, $\uparrow i =\{j\in P\mid i \preceq j\}$ is referred to as the {\it up set} of $i\in P$.
Since $A$ is the poset matrix of the NL poset $P_A$, it follows that $r_i(A)=|\downarrow i|$ and $c_i(A)=|\uparrow i|$. Thus we have the following corollary.
\begin{corollary} Let $P_A$ be the NL poset associated with the poset matrix $A\in\mathcal{PM}(n)$. If the pairs $\bigl(|\downarrow i|,|\uparrow i|\bigr)$ are distinct for all $i\in[n]$, 
then $\mathrm{Aut}(A)$ is trivial.
\end{corollary}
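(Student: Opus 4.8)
The plan is to obtain this as an immediate consequence of Theorem~\ref{thm36} once the row- and column-sum data of $A$ are translated into the language of down- and up-sets. First I would recall the defining convention \eref{pm}: the $(i,j)$ entry of the poset matrix $A$ equals $1$ exactly when $j\preceq i$ in $P_A$. Reading off the $i$th row therefore gives $\mathrm{supp}(r_i)=\{j\mid j\preceq i\}=\downarrow i$, so the $i$th row sum is $r_i(A)=|\downarrow i|$; reading off the $i$th column, $a_{j,i}=1$ iff $i\preceq j$, so the $i$th column sum is $c_i(A)=|\{j\mid i\preceq j\}|=|\uparrow i|$. This is exactly the identification already noted just before the statement.

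With this dictionary in hand, the hypothesis that the pairs $\bigl(|\downarrow i|,|\uparrow i|\bigr)$ are pairwise distinct for $i\in[n]$ is verbatim the hypothesis of Theorem~\ref{thm36} applied to $A$. That theorem then yields $\mathrm{Aut}(A)=\{I_n\}$, and since $\mathrm{Aut}(A)\cong\mathrm{Aut}(P_A)$, the poset $P_A$ likewise has only the identity automorphism.

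If one prefers a self-contained argument not routed through the matrix formulation, I would instead take an arbitrary $\sigma\in\mathrm{Aut}(P_A)$ and note that an order isomorphism restricts, for each $i$, to bijections $\downarrow i\to\downarrow\sigma(i)$ and $\uparrow i\to\uparrow\sigma(i)$; hence $\bigl(|\downarrow i|,|\uparrow i|\bigr)=\bigl(|\downarrow\sigma(i)|,|\uparrow\sigma(i)|\bigr)$ for every $i$. The distinctness hypothesis then forces $\sigma(i)=i$ for all $i\in[n]$, so $\sigma=\mathrm{id}$.

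There is essentially no obstacle here: the only point requiring care is the bookkeeping in \eref{pm} — namely that it is the row sum which records $|\downarrow i|$ and the column sum which records $|\uparrow i|$, and not the reverse — so that the statement aligns with Theorem~\ref{thm36} rather than with its transpose. Everything else is a direct invocation.
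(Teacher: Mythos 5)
Your proof is correct and matches the paper's own treatment exactly: the paper likewise notes the identifications $r_i(A)=|\downarrow i|$ and $c_i(A)=|\uparrow i|$ directly from the definition of the poset matrix and then presents the corollary as an immediate consequence of Theorem~\ref{thm36}. Your optional self-contained argument via order isomorphisms preserving down- and up-set cardinalities is a correct and natural restatement of the same content, but it is not a different route.
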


The {\it disjoint union} or {\it disjoint sum} of two posets $P$, $Q$, denoted by $P\bigsqcup Q$ or $P+Q$, is the 
poset $(P\bigcup Q,\preceq_+)$ where for all $x,y\in P\bigcup Q$, $x\preceq_+ y$ if and only if either $x\preceq_P y$ or $x\preceq_Q y$. A poset
 which cannot be written as the disjoint union of two posets is called {\it connected}. The {\it height} of $P$, denoted by $h(P)$, is the cardinality of a longest chain in $P$. Similarly, the {\it width} of $P$, denoted by $w(P)$, is the cardinality of a largest antichain in $P$. We denote ${\bf C}_n$ and ${\bf I}_n$ by the chain and antichain on $X_n$ respectively. 

\begin{theorem} {\rm(Dilworth \cite{Dilworth})} Let $P$ be a finite poset of width $w(P)=k$. Then
 there exists a partition of $P$ into $k$ chains, that is, $P={\bf C}_1\bigsqcup \cdots\bigsqcup {\bf C}_k$ 
 where ${\bf C}_i\bigcap {\bf C}_j=\emptyset$ for all $i\ne j$.
 \end{theorem}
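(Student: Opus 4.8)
The plan is to prove Dilworth's theorem by induction on $|P|$, following Galvin's argument. First I record the easy direction: in any partition of $P$ into $m$ chains an antichain meets each chain in at most one element, so $w(P)\le m$; hence it suffices to produce a chain partition of $P$ with at most $k=w(P)$ parts. The base case $|P|\le 1$ is immediate. For the inductive step I would fix a maximal element $a\in P$, set $P':=P\setminus\{a\}$, and let $k':=w(P')$, noting that $k'\le k$ (an antichain of $P'$ is an antichain of $P$) and that $|P'|<|P|$. By the induction hypothesis, $P'$ decomposes into chains $C_1,\dots,C_{k'}$.

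The heart of the argument is a lemma relating these chains to the maximum antichains of $P'$. Since $|A|=k'$ for every maximum antichain $A$ of $P'$ and $A\subseteq C_1\sqcup\cdots\sqcup C_{k'}$, the pigeonhole principle forces $|A\cap C_i|=1$ for each $i$; in particular every $C_i$ meets some maximum antichain. I would then define $x_i$ to be the greatest element of $C_i$ lying in some maximum antichain of $P'$, and claim that $X:=\{x_1,\dots,x_{k'}\}$ is an antichain. Indeed, if $x_i<x_j$ for some $i\ne j$, choosing a maximum antichain $A$ of $P'$ with $x_j\in A$ gives $A\cap C_i=\{w\}$ with $w\le x_i<x_j$ by the definition of $x_i$, so $w<x_j$, contradicting that $w$ and $x_j$ are distinct members of the antichain $A$.

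It remains to split into two cases according to the position of the maximal element $a$, which cannot lie below anything. If $a>x_i$ for some $i$, set $K:=\{a\}\cup\{y\in C_i\mid y\le x_i\}$, a chain in $P$. Any antichain $B$ of $P\setminus K$ lies in $P'$, so $|B|\le k'$; and if $|B|=k'$ then $B$ is a maximum antichain of $P'$ and meets $C_i$ in an element $b\notin K$, forcing $b>x_i$ and contradicting the maximality of $x_i$. Hence $w(P\setminus K)\le k'-1$, the induction hypothesis covers $P\setminus K$ by at most $k'-1$ chains, and adjoining $K$ yields a chain partition of $P$ into some number $m\le k'$ of parts; the inequalities $k'=w(P')\le w(P)\le m\le k'$ then pin down $w(P)=k'=m$, giving the claimed partition. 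In the remaining case $a$ is incomparable to every $x_i$, so $X\cup\{a\}$ is an antichain of size $k'+1$, whence $k=w(P)=k'+1$ and $C_1,\dots,C_{k'},\{a\}$ is a partition of $P$ into $k$ chains. The main obstacle, and essentially the only nonroutine point, is the lemma that the $x_i$ form an antichain; the rest is bookkeeping, the one delicate aspect being that $k'$ may a priori be strictly smaller than $k$, a discrepancy that the case analysis resolves only after the fact. An alternative route would model the strict comparabilities of $P$ as a bipartite graph and invoke König's minimax theorem for matchings and vertex covers, but the inductive proof above is self-contained.
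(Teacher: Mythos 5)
Your proof is correct: it is Galvin's short inductive proof of Dilworth's theorem, and every step checks out --- the pigeonhole observation that each chain $C_i$ meets every maximum antichain of $P'$, the lemma that $\{x_1,\dots,x_{k'}\}$ is itself an antichain, and the final bookkeeping that pins down $w(P)=m$ in each case. Note, however, that the paper does not prove this statement at all: it is recalled as a classical result with a citation to Dilworth's original 1950 paper, so there is no proof of the paper's own to compare against. Your choice of Galvin's argument is one of several standard routes (Dilworth's original proof inducts differently and is more intricate, and the bipartite-graph/K\"onig route you mention at the end is a third), and it has the virtue of being short and self-contained.
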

  
In the following proposition, we give the automorphism groups of chains and antichains. The results immediately follow from Theorem \ref{thm36}.

\begin{proposition}  Let ${\bf C}_n$ and ${\bf I}_n$ be the chain and antichain of length $n$, respectively. Then
\begin{itemize}
\item[{\rm (a)}] $\mathrm{Aut}({\bf C}_n)\cong\ \{{\rm id}\}$;
\item[{\rm (b)}] $\mathrm{Aut}({\bf I}_n)\ \cong \ \mathfrak{S}_n$;
\item[{\rm (c)}] $\mathrm{Aut}({\bf I}_n\bigsqcup{\bf C}_m)\ \cong\ \mathfrak{S}_n$, $m\ge2$;
\item[{\rm (d)}] $\mathrm{Aut}({\bf C}_n\bigsqcup{\bf C}_m)\;\cong\;
\begin{cases}
\mathfrak{S}_2, & \text{if } n=m,\\[2mm]
 \{{\rm id}\}, & \text{if } n\neq m.
\end{cases}$
\end{itemize}
\end{proposition}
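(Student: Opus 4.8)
The plan is to handle the four cases separately, using the rigidity criterion of Theorem~\ref{thm36} (recall that $r_i(A)=|\downarrow i|$ and $c_i(A)=|\uparrow i|$, so the hypothesis there is that the pairs $\bigl(|\downarrow i|,|\uparrow i|\bigr)$ are distinct) for the cases where the group is trivial, and one extra elementary fact — a poset automorphism permutes the connected components and restricts to isomorphisms between them — for the cases where it is not. Throughout, $\mathrm{Aut}$ depends only on the isomorphism type. Part (a) is then immediate: in the chain ${\bf C}_n$ the $i$-th element from the bottom has $|\downarrow i|=i+1$, so the down-set sizes $1,2,\dots,n$ are pairwise distinct, and Theorem~\ref{thm36} gives $\mathrm{Aut}({\bf C}_n)\cong\{{\rm Id}\}$. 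Part (b) is also immediate: ${\bf I}_n$ carries no strict relations, so the defining condition for $\mathrm{Aut}$ is vacuous and every permutation of $X_n$ is an automorphism, whence $\mathrm{Aut}({\bf I}_n)\cong\mathfrak{S}_n$.

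For (c), I would first record the pair $\bigl(|\downarrow x|,|\uparrow x|\bigr)$ of each element of ${\bf I}_n\bigsqcup{\bf C}_m$: every point of ${\bf I}_n$ is isolated, so it carries the pair $(1,1)$, whereas the $j$-th element (from the bottom) of ${\bf C}_m$ carries $(j+1,\,m-j)$, a pair whose coordinates sum to $m+1\ge 3$ since $m\ge 2$. Hence the $m$ elements of ${\bf C}_m$ carry pairwise distinct pairs, each different from $(1,1)$. Since an automorphism $\sigma$ sends every element to one with the same pair, $\sigma$ must fix ${\bf C}_m$ pointwise and permute the points of ${\bf I}_n$ among themselves; conversely, as the points of ${\bf I}_n$ are incomparable to everything, every permutation of them (fixing ${\bf C}_m$) is an automorphism. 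Therefore $\mathrm{Aut}({\bf I}_n\bigsqcup{\bf C}_m)\cong\mathfrak{S}_n$.

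For (d), if $n\ne m$ then every element of the first chain has a pair with coordinate-sum $n+1$ and every element of the second a pair with coordinate-sum $m+1\ne n+1$, while the pairs within each chain are already distinct by the computation in (a); thus all $n+m$ pairs $\bigl(|\downarrow x|,|\uparrow x|\bigr)$ are distinct and Theorem~\ref{thm36} gives $\mathrm{Aut}({\bf C}_n\bigsqcup{\bf C}_m)\cong\{{\rm Id}\}$. If $n=m$ this criterion fails (the two minimal elements share the pair $(1,n)$), so instead I would use that an automorphism permutes the two connected components: it either fixes each chain setwise — hence restricts to an automorphism of each, which is the identity by (a), so $\sigma={\rm Id}$ — or it interchanges the two chains, in which case it is the unique such map because each chain is rigid, and one checks directly that this map (glued from the unique order-isomorphism between the components) is an automorphism. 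Hence $|\mathrm{Aut}({\bf C}_n\bigsqcup{\bf C}_n)|=2$, i.e. $\mathrm{Aut}({\bf C}_n\bigsqcup{\bf C}_n)\cong\mathfrak{S}_2$.

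The one point requiring care is the dichotomy between the cases: Theorem~\ref{thm36} provides only a \emph{sufficient} condition for triviality, so in (b), in (c) with $n\ge 1$, and in (d) with $n=m$ — where the automorphism group is genuinely nontrivial — one must supplement it with the observation that automorphisms respect the connected-component decomposition and restrict to isomorphisms of components. That observation is elementary, so the remaining work is just bookkeeping of the invariants $\bigl(|\downarrow x|,|\uparrow x|\bigr)$ together with this structural remark; I do not anticipate a real obstacle.
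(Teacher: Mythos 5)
Your proof is correct, and you have also put your finger on a real imprecision in the paper. The paper's ``proof'' of this proposition is a single sentence --- ``The results immediately follow from Theorem~\ref{thm36}'' --- and, as you observe, Theorem~\ref{thm36} is a one-directional implication: it gives a \emph{sufficient} condition for ${\rm Aut}(A)$ to be trivial, and so can only establish part~(a) and the $n\neq m$ branch of part~(d). For parts~(b), (c), and (d) with $n=m$, where the automorphism group is genuinely nontrivial, Theorem~\ref{thm36} is silent: the failure of its hypothesis says nothing. Your supplementary arguments fill exactly this gap. For~(b) you note that an antichain has no strict relations and hence every permutation is an automorphism. For~(c) you use the standard fact (implicit in the proof of Theorem~\ref{thm36}) that any automorphism preserves the pair $\bigl(|\mathord\downarrow x|,|\mathord\uparrow x|\bigr)$; the computation $(1,1)$ for each isolated point and $(j+1,m-j)$ with sum $m+1\geq 3$ for the chain elements forces the automorphism to fix the chain pointwise, and conversely any permutation of the $n$ isolated points works. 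For~(d) with $n=m$ you invoke that automorphisms permute connected components and restrict to isomorphisms between them, together with the rigidity of chains from~(a), to get exactly two automorphisms. All of this is sound, and the component-permutation fact you add is the natural tool the paper's one-line proof tacitly needs but does not name. The only substantive difference between your write-up and what the paper intends is that you make explicit the invariant-preservation and component-permutation arguments that the paper sweeps under ``immediately.''
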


\begin{corollary} We have

\begin{itemize}
\item[{\rm (a)}] $|{\mathsf{NL}}_{n+1}({\bf C}_n)|=|{\mathsf{NIP}}_{n+1}({\bf C}_n)|=n+1$;
\item[{\rm (b)}] $|\mathsf{NL}_{n+1}({\bf I}_n)|=2^n$;\quad $|\mathsf{NIP}_{n+1}({\bf I}_n)|=n+1$.
\end{itemize}
\end{corollary}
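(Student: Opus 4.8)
The plan is to feed two ingredients into the bijections already established: a direct count of the order ideals of ${\bf C}_n$ and ${\bf I}_n$, and the automorphism groups supplied by the preceding proposition. Recall from Theorems~\ref{ppp} and~\ref{rowspace} that $|{\mathsf{NL}}_{n+1}(P_A^v)|=|\mathsf{PV}_n(A)|=|\SetIdeals(P_A)|$, and from Corollary~\ref{PVn} that $|{\mathsf{NIP}}_{n+1}(A)|=|\mathsf{PV}_n(A)/G|$ with $G=\mathrm{Aut}(A)$. For part~(a), the poset matrix of ${\bf C}_n$ is the full lower-triangular all-ones matrix $A$, so $P_A$ is the chain $0\prec1\prec\cdots\prec n-1$; its order ideals are precisely the $n+1$ down-closed sets $\{0,1,\ldots,k-1\}$ for $0\le k\le n$, whence $|\SetIdeals({\bf C}_n)|=n+1$ and $|{\mathsf{NL}}_{n+1}({\bf C}_n)|=n+1$. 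Since $\mathrm{Aut}({\bf C}_n)\cong\{\mathrm{Id}\}$ by the preceding proposition, every $G$-orbit on $\mathsf{PV}_n(A)$ is a singleton, so $|{\mathsf{NIP}}_{n+1}({\bf C}_n)|=|\mathsf{PV}_n(A)/G|=|\mathsf{PV}_n(A)|=n+1$.

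For part~(b), the poset matrix of ${\bf I}_n$ is $I_n$ and $P_A={\bf I}_n$ carries no nontrivial relations, so every subset of $X_n$ is an order ideal; thus $|\SetIdeals({\bf I}_n)|=2^n$ and $|\mathsf{NL}_{n+1}({\bf I}_n)|=2^n$. For the non-isomorphic count, I would use that $\mathrm{Aut}({\bf I}_n)\cong\mathfrak{S}_n$ acts on $\mathsf{PV}_n(A)=\{0,1\}^n$ by permuting coordinates: two Boolean vectors are $G$-equivalent exactly when they have equal support size, and the support size ranges over $0,1,\ldots,n$. Hence $\mathsf{PV}_n(A)/G$ has exactly $n+1$ elements, and Corollary~\ref{PVn} gives $|\mathsf{NIP}_{n+1}({\bf I}_n)|=n+1$. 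Equivalently, Burnside's formula \eqref{burnside} reduces the count to $\frac{1}{n!}\sum_{\sigma\in\mathfrak{S}_n}2^{c(\sigma)}$, where $c(\sigma)$ is the number of cycles of $\sigma$, and the evaluation $\sum_{\sigma\in\mathfrak{S}_n}x^{c(\sigma)}=x(x+1)\cdots(x+n-1)$ at $x=2$ yields $(n+1)!$, so the ratio is $n+1$.

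There is no real obstacle here: the argument is essentially bookkeeping with the correspondences of Theorems~\ref{ppp}, \ref{rowspace} and Corollary~\ref{PVn}, once the two order-ideal counts are written down. The only point needing a genuine (but entirely elementary) argument is the orbit count in part~(b) — the classification of $\mathfrak{S}_n$-orbits on the subsets of an $n$-element set by cardinality — which I would present directly rather than invoking Burnside's formula.
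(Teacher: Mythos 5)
Your proposal is correct and matches the approach the paper's framework dictates (the paper leaves this corollary without an explicit proof, as it follows directly from the preceding proposition giving $\mathrm{Aut}({\bf C}_n)\cong\{\mathrm{Id}\}$ and $\mathrm{Aut}({\bf I}_n)\cong\mathfrak{S}_n$, together with Theorem~\ref{rowspace} and Corollary~\ref{PVn}). Your order-ideal counts, the observation that $\mathfrak{S}_n$-orbits on $\{0,1\}^n$ are classified by support size, and the optional Burnside check $\frac{1}{n!}\sum_{\sigma}2^{c(\sigma)}=(n+1)!/n!=n+1$ are all correct.
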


The {\it ordinal sum} of two posets $P$ and $Q$, denoted by $P \oplus Q$, is the poset $(P\bigcup Q,\preceq_{\oplus})$ defined as follows: for all $x,y \in P\bigcup Q$, $x \preceq_{\oplus} y$ if and only if either $x \preceq_P y$; $x \preceq_Q y$; or $x \in P$ and $y \in Q$. Let $A$ and $B$ be the poset matrices of $P\in\mathsf{NL}_n$ and 
$Q\in\mathsf{NL}_m$, respectively. Then the poset matrix of the ordinal sum $P\oplus Q$ is given by
$$
A\oplus B
:=
\left[
\begin{array}{cc}
A & O\\
J & B
\end{array}
\right],
$$
where $O$ denotes the $n\times m$ zero matrix and $J$ denotes the 
$m\times n$ all-ones matrix (see \cite{Cheon1}).

\begin{theorem}
Let $A\in\mathcal{PM}(n)$ and $B\in\mathcal{PM}(m)$. Then
$$
{\rm Aut}(A\oplus B)\cong {\rm Aut}(A)\times {\rm Aut}(B).
$$
\end{theorem}
\begin{proof} Let $M:=A\oplus B$, and suppose $Q\in {\rm Aut}(M)$ is a permutation matrix of order $n+m$. Write 
$$
M=\begin{bmatrix}
A & O\\
J & B
\end{bmatrix}\in\mathcal{PM}(n+m)\quad{\text{and}}\quad
Q=\begin{bmatrix}
Q_1 & X\\
Y & Q_2
\end{bmatrix}\in \mathfrak{P}_{n+m},
$$
where $Q_1$ and $Q_2$ are (0,1)-matrices of orders $n$ and $m$ respectively.
Since $Q\in {\rm Aut}(M)$, we have $QMQ^{T}=M$. We first show that $X=O$ and $Y=O$. Let $\sigma$ be the permutation of the set $[n+m]$ corresponding to the permutation matrix $Q$. If $\sigma(i)\le n$ for all $i\le n$ then the bijectivity of $\sigma$ implies that $X=O$ and $Y=O$. Assume that $\sigma(i)>n$ for some $i\le n$. By the bijectivity of $\sigma$, there exists $j>n$ such that $\sigma(j)<n$. Thus, the entry $(i,j)$ lies in the upper-right block $O$ of $M$ and  $(\sigma(i),\sigma(j))$ lies in the lower-left block $J$ of $M$. Since $QMQ^{T}=M$, we have $M_{ij}=M_{\sigma(i),\sigma(j)}$ for all $i,j\in[n+m]$. It follows that $0=M_{ij}=M_{\sigma(i),\sigma(j)}=1$, a contradiction. Hence $\sigma(j)>n$ for every $j>n$ so that $X=O$ and $Y=O$ by the bijectivity of $\sigma$. 
Hence
$$
Q=
\begin{bmatrix}
Q_1 & O\\
O & Q_2
\end{bmatrix}
$$
where $Q_1$ and $Q_2$ are permutation matrices of orders $n$ and $m$ respectively.

Computing under Boolean operation yields
$$
QMQ^T
=\begin{bmatrix}
Q_1 & O\\
O & Q_2
\end{bmatrix}
\begin{bmatrix}
A & O\\
J & B
\end{bmatrix}
\begin{bmatrix}
Q_1^T & O\\
O & Q_2^T
\end{bmatrix}
=
\begin{bmatrix}
Q_1AQ_1^T & O\\
J & Q_2BQ_2^T
\end{bmatrix}.
$$
Thus $QMQ^T=M$ is equivalent to $Q_1AQ_1^T=A$ and $Q_2BQ_2^T=B$. That is, $Q_1\in {\rm Aut}(A)$ and $Q_2\in {\rm Aut}(B)$.
Hence ${\rm Aut}(A\oplus B)\cong{\rm Aut}(A)\times{\rm Aut}(B)$, as required.
\end{proof}

\begin{theorem}\label{congruence}
Let $A\in\mathcal{PM}(n)$ and let $B=Q^TAQ$ for any $Q\in\mathfrak{P}_n$. Then ${\rm Aut}(A)\cong {\rm Aut}(B).$
\end{theorem}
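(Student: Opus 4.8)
The plan is to realize $B$ as the image of $A$ under the conjugation action of $\mathfrak{P}_n$ and then observe that conjugation carries the stabilizer of $A$ isomorphically onto the stabilizer of $B$. Concretely, I would define $\phi\colon\mathfrak{P}_n\to\mathfrak{P}_n$ by $\phi(R)=Q^TRQ$. Since $Q$ is a permutation matrix we have $Q^T=Q^{-1}$, so $\phi$ is conjugation by $Q^{-1}$ inside the group $\mathfrak{P}_n$; in particular $\phi$ is a group automorphism of $\mathfrak{P}_n$, hence multiplicative and bijective on $\mathfrak{P}_n$.

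First I would verify that $\phi$ restricts to a map $\mathrm{Aut}(A)\to\mathrm{Aut}(B)$. If $R\in\mathrm{Aut}(A)$, i.e. $R^TAR=A$, then using $B=Q^TAQ$ and $QQ^T=I_n$,
\[
\phi(R)^T\,B\,\phi(R)=Q^TR^TQ\,(Q^TAQ)\,Q^TRQ=Q^T\bigl(R^TAR\bigr)Q=Q^TAQ=B,
\]
so $\phi(R)\in\mathrm{Aut}(B)$. By the symmetric computation, noting that $A=QBQ^T$, the map $\psi\colon S\mapsto QSQ^T$ sends $\mathrm{Aut}(B)$ into $\mathrm{Aut}(A)$, and a direct check shows $\psi\circ\phi=\mathrm{id}_{\mathrm{Aut}(A)}$ and $\phi\circ\psi=\mathrm{id}_{\mathrm{Aut}(B)}$. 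Hence $\phi$ restricts to a group isomorphism $\mathrm{Aut}(A)\xrightarrow{\ \sim\ }\mathrm{Aut}(B)$, which is exactly the claim.

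There is essentially no genuine obstacle here; the only points that require attention are the identity $Q^T=Q^{-1}$ for permutation matrices (which is what makes $\phi$ preserve $\mathfrak{P}_n$ and be invertible) and keeping track of the order of factors when transposing a triple product. Alternatively, the whole argument can be compressed into the language of group actions: $A$ and $B=Q^TAQ$ lie in the same orbit of the $\mathfrak{P}_n$-action on $n\times n$ Boolean matrices given by $R\ast A=R^TAR$, and stabilizers of points in a common orbit are conjugate subgroups, hence isomorphic; taking $R=Q$ yields precisely $\mathrm{Aut}(B)=Q^{-1}\,\mathrm{Aut}(A)\,Q$. Either formulation completes the proof, and combined with the earlier remark that $\mathrm{Aut}(A)\cong\mathrm{Aut}(P_A)$ it shows that the abstract automorphism group depends only on the isomorphism type of the underlying poset, as expected.
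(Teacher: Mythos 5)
Your proof is correct and follows essentially the same route as the paper's: both define the isomorphism by conjugation by $Q$ (the paper writes it as $\tau\mapsto\sigma^{-1}\tau\sigma$ at the level of permutations and then translates to matrices, while you work directly with $R\mapsto Q^TRQ$ at the matrix level, which is the identical map). Your exhibiting the explicit inverse $\psi$ in place of the paper's kernel-plus-surjectivity check, and the closing group-action reformulation, are cosmetic variants of the same argument.
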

\begin{proof} Since 
$${\rm Aut}(A)=\{Q\in \mathfrak{P}_n |Q^TAQ=A\}\cong\{\tau\in \mathfrak{S}_n |Q_\tau^TAQ_\tau=A\},$$  we may define a map $\varphi:{\rm Aut}(A)\to {\rm Aut}(B)$ by $\varphi(\tau)=\sigma^{-1}\tau\sigma$ for any $\sigma\in \mathfrak{S}_n$. We claim that $\varphi$ is a group isomorphism. Using $B=Q^T A Q$ and $\tau\in{\rm Aut}(A)$, we obtain
\begin{eqnarray*}
Q_{\varphi(\tau)}^{T} B Q_{\varphi(\tau)}
&=& (Q_{\sigma}^{T} Q_{\tau}^{T} Q_{\sigma^{-1}}^{T})(Q_{\sigma}^{T} A Q_{\sigma})(Q_{\sigma^{-1}} Q_{\tau} Q_{\sigma})=Q_{\sigma}^T\big(Q_{\tau}^T A Q_{\tau}\big) Q_{\sigma}\\
&=&Q_{\sigma}^T A Q_{\sigma}=B.
\end{eqnarray*}
Thus $\varphi(\tau)\in {\rm Aut}(B)$. To show that $\varphi$ is surjective, let $\rho\in {\rm Aut}(B)$ and set $\tau:=\sigma\rho\sigma^{-1}$. Then it can be similarly shown that $\tau\in  {\rm Aut}(A)$. Also we have ${\rm{ker}}(\varphi)=\{1_\sigma\}$. Moreover, since 
\begin{eqnarray*}
\varphi(\tau\tau')=\sigma^{-1}\tau\tau'\sigma= (\sigma^{-1}\tau\sigma)(\sigma^{-1}\tau'\sigma)=\varphi(\tau)\varphi(\tau'),
\end{eqnarray*}
$\varphi$ is a homomorphism. Hence $\varphi$ is a group isomorphism, which completes the proof.
\end{proof}
We note that the matrix $B$ in Theorem \ref{congruence} is {\it not necessarily} lower triangular, however $B$ is a Boolean idempotent matrix \cite{kim}. This shows that automorphism groups of permutation similar matrices to a poset matrix are isomorphic up to relabeling.

\section{Computing automorphism groups via twin classes}
For a poset matrix $A \in \mathcal{PM}(n)$ with associated NL poset $P_A$, we have ${\rm Aut}(A)\cong {\rm Aut}(P_A)$. In general, computing ${\rm Aut}(A)$ directly from $A$ is not straightforward. To address this, we use a structural approach based on the notion of {\it twin elements} of a poset introduced in \cite{Sagan}. In this section we refine this concept by organizing such elements into {\it twin classes}, which provide a more transparent description of the internal symmetries of a poset. 

Following \cite{Sagan}, two elements $x,y \in P$ are called \emph{twins} if $\downarrow x =\; \downarrow y$ and $\uparrow x =\; \uparrow y$. Recall that down set $\downarrow x$ and up set $\uparrow x$ are defined using $\preceq$. This definition is often too restrictive for our purposes, since it typically yields only trivial equivalence classes. For instance, in an antichain, every element forms a singleton twin class under this definition. To better capture structural similarity, we instead use the strict order. Define the {\it strict} down and up sets of $x\in P$, respectively by 
$$
D(x)=\{z \in P \mid z \prec x\}, \qquad
U(x)=\{z \in P \mid x \prec z\}.
$$
We say that elements $x,y\in P$ are {\it twins}, written $x \equiv y$, if
$$
D(x)=D(y) \quad \text{and} \quad U(x)=U(y).
$$
It is straightforward to verify that $\equiv$ is an equivalence relation on $X_n$. We denote the set of equivalence classes by
$$
X_n/_{\equiv} = \{\alpha_1, \ldots, \alpha_r\},
$$
and refer to $\alpha_1, \ldots, \alpha_r$ as the {\it twin classes} of $P$. This definition reflects the comparability of $P$ and leads to a more meaningful partition of $X_n$ into twin classes.

Moreover, the twin classes admit a clear interpretation in terms of the associated poset matrix. In the poset matrix $A$, twin elements correspond precisely to identical rows and identical columns. Such repetitions reflect symmetries of the poset, since elements within the same twin class can be permuted freely without affecting the order relation. Consequently, twin classes provide a natural and effective tool for analyzing and computing the automorphism group.

The following proposition gives a matrix theoretic characterization of this phenomenon and serves as a key step in our approach. For $n\times n$ matrices $A$ and $B$, we say that $A$ is {\it permutation similar} to $B$ if there exists a permutation matrix $Q\in\mathfrak{P}_n$ such that $B=Q^TAQ$.

\begin{proposition}\label{block}
Let $A\in{\mathcal PM}(n)$ be a poset matrix, and suppose that $A$ is permutation similar to a matrix $B$ of the form
$$
B=
\begin{bmatrix}
I_k & X\\[2pt]
Y   & Z
\end{bmatrix},
$$
where $X,Y,Z$ are empty when $k=n$. For any permutation matrix $Q_k\in\mathfrak{P}_k$, define
$$
Q :=
\begin{bmatrix}
Q_k & 0\\
0 & I_{n-k}
\end{bmatrix}\in\mathfrak{P}_n.
$$
Then $Q\in {\rm Aut}(B)$ if and only if all rows of $X$ are identical and all columns of $Y$ are identical in the matrix $B$.
\end{proposition}
\begin{proof}
By definition, $Q\in {\rm Aut}(B)$ if and only if $Q^T B Q = B$. A direct block multiplication yields
$$
Q^{T} B Q
=
\begin{bmatrix}
I_k & Q_k^{T}X\\[2pt]
YQ_k & Z
\end{bmatrix}.
$$
Hence $Q^{T} B Q = B$ holds if and only if $Q_k^{T}X = X$ and $YQ_k = Y$. Since $Q_k$ is any permutation matrix in $\mathfrak{P}_k$, the condition
$Q_k^{T}X = X$ holds when every row of $X$ is identical,
while $YQ_k = Y$ holds when every column of $Y$ is identical.
This completes the proof.
\end{proof}
\begin{remark} {\rm Permutation similarity of poset matrices corresponds to relabeling the elements of the associated poset.
Such a relabeling preserves the isomorphism class of the poset, but it does not necessarily preserve the natural labeling.
Consequently, a matrix permutation similar to a poset matrix need not itself be a lower triangular matrix.} 
\end{remark}

Recall that for any subsets $\alpha,\;\beta\subseteq X_n$, we denote $A[\alpha\mid\beta]$ by the $|\alpha|\times |\beta|$ submatrix of $A$ obtained from $A$ by taking rows and columns indexed in $\alpha$ and $\beta$, respectively. If $\alpha=\beta$, we simplify the notation to $A[\alpha]$, and $\alpha^C=X_n\setminus \alpha$.
\begin{theorem}\label{classes} Let $A\in\mathcal{PM}(n)$ and let $\{\alpha_1,\ldots,\alpha_r\}$ be the set of twin classes of $P_A\in\mathcal{NL}(n)$.
Then for any $\alpha_i$ with $|\alpha_i|=k_i$,
\begin{enumerate}
\item[\rm(a)] $A[\alpha_i]=I_{k_i}$, equivalently $\alpha_i$ is the antichain of $k_i$ elements;
\item[\rm(b)] all rows of $A[\alpha_i\mid\alpha_i^C]$ are identical, equivalently if $x,y\in\alpha_i$ then $D(x)=D(y)$;
\item[\rm(c)] all columns of $A[\alpha_i^C\mid\alpha_i]$ are identical, equivalently if $x,y\in\alpha_i$ then $U(x)=U(y)$.
\end{enumerate}
\end{theorem}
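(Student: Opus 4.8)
The plan is to argue directly from the definition of twins, establishing part (a) first and then using it as the crucial input to parts (b) and (c). Throughout, recall that for $A=[a_{i,j}]\in\mathcal{PM}(n)$ one has $a_{i,j}=1$ if and only if $j\preceq i$ in $P_A$, and that for distinct indices $j\neq i$ the relation $j\preceq i$ is equivalent to $j\prec i$; these two facts are the only translation tools needed.

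For (a), fix a twin class $\alpha_i$ and let $x,y\in\alpha_i$ be distinct. If $x\prec y$ then $x\in D(y)=D(x)$, contradicting irreflexivity of $\prec$; by symmetry $y\prec x$ is also impossible, so $x$ and $y$ are incomparable. Hence $\alpha_i$ is an antichain, and translating to matrices the off-diagonal entries of $A[\alpha_i]$ vanish by incomparability while the diagonal entries equal $1$ by reflexivity, so $A[\alpha_i]=I_{k_i}$.

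For (b), fix $x,y\in\alpha_i$. The key preliminary observation, which uses (a), is that $D(x)\cap\alpha_i=\emptyset$: if $z\in D(x)\cap\alpha_i$ then $z\prec x$ with $z,x\in\alpha_i$, contradicting the antichain property. Thus $D(x)\subseteq\alpha_i^C$, and the row of $A$ indexed by $x$ restricted to the columns indexed by $\alpha_i^C$ is precisely the incidence vector of $D(x)$, since for $j\in\alpha_i^C$ we have $j\neq x$ and hence $a_{x,j}=1\iff j\preceq x\iff j\prec x\iff j\in D(x)$. Because $x\equiv y$ gives $D(x)=D(y)$, all rows of $A[\alpha_i\mid\alpha_i^C]$ coincide. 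Part (c) is the transpose of (b): by (a) we get $U(x)\cap\alpha_i=\emptyset$, so $U(x)\subseteq\alpha_i^C$, and the column of $A$ indexed by $x$ restricted to rows in $\alpha_i^C$ is the incidence vector of $U(x)$ (for $j\in\alpha_i^C$, $a_{j,x}=1\iff x\preceq j\iff x\prec j\iff j\in U(x)$); since $U(x)=U(y)$ for twins, all columns of $A[\alpha_i^C\mid\alpha_i]$ coincide.

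There is no genuine obstacle here — the argument is a matter of unwinding definitions — but the point requiring care is the interplay between the strict and non-strict order relations, and, relatedly, the fact that (a) must be proved first: it is exactly the antichain property that forces $D(x)$ and $U(x)$ to avoid $\alpha_i$, so that the restrictions of row and column $x$ to $\alpha_i^C$ capture the full strict down- and up-sets and the twin condition can be read off entry by entry. Finally, one may note that (a)–(c) together put $Q^TAQ$, for a permutation $Q$ that lists the indices of a fixed $\alpha_i$ first, into the block form of Lemma~\ref{block} with $X$ having identical rows and $Y$ identical columns, which is precisely the bridge to computing $\mathrm{Aut}(A)$ from the twin-class decomposition.
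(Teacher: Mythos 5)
Your proof is correct and follows essentially the same route as the paper's: derive the antichain property of a twin class from $D(x)=D(y)$ and irreflexivity, then translate the twin conditions $D(x)=D(y)$ and $U(x)=U(y)$ entrywise into equality of the relevant rows and columns of $A$. The one small refinement you add — noting via (a) that $D(x)\subseteq\alpha_i^C$ and $U(x)\subseteq\alpha_i^C$, so the restriction of row (resp.\ column) $x$ to $\alpha_i^C$ is exactly the incidence vector of $D(x)$ (resp.\ $U(x)$) rather than a truncation — is a clean observation but not logically required; the paper gets by with the weaker remark that rows and columns indexed by twins agree outside their own coordinates.
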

\begin{proof} Clearly, the results (a)-(c) holds for $\alpha_i$ with $|\alpha_i|=1$ or $n$. Let $2\le |\alpha_i|<n$ and suppose that $x,y\in\alpha_i$. If $x\prec y$ then $x\in D(y)=D(x)$ so that $x\in D(x)$,
 a contradiction. Thus no two elements in $\alpha_i$ are comparable, so it is an antichain. Hence $A[\alpha_i]=I_{k_i}$, which proves (a).

Moreover, since $x \equiv y$, we have $D(x)=D(y)$ and $U(x)=U(y)$. In matrix terms of $A$, by (a), $A_{xy}=A_{yx}=0$, and clearly $A_{xx}=A_{yy}=1$. Thus, for every $z\in X_n\setminus\{x,y\}$ we have
$A_{xz}=A_{yz}$ and $A_{zx}=A_{zy}$, which proves (b) and (c) respectively.
\end{proof}

\begin{theorem}\label{welldef}
Let $P=(X_n,\preceq)\in\mathcal{NL}(n)$, and let $X_n/_{\equiv}=\{\alpha_1,\dots,\alpha_r\}$ be the set of twin classes of $P$.  
Define a relation $\preceq_{\equiv}$ on $X_n/_{\equiv}$ by $\alpha_i \preceq_{\equiv} \alpha_j$  if and only if $x \preceq y$ in $P$ for some $x\in\alpha_i,\ y\in\alpha_j$.
Then $(X_n/_{\equiv},\preceq_{\equiv})$ is a poset.
\end{theorem}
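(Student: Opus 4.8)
The plan is to prove the claim in two stages: first I would show that $\preceq_\equiv$ is \emph{well defined} on the quotient $X_n/_\equiv$, and then I would read off reflexivity, antisymmetry, and transitivity from the corresponding properties of $\preceq$ on $P$ by passing to arbitrary representatives. Essentially all of the substance lies in the first stage; once it is settled, the partial order axioms follow formally.

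For well-definedness I would fix twin classes $\alpha_i \neq \alpha_j$ and assume $x \preceq y$ for some $x \in \alpha_i$, $y \in \alpha_j$. Since $x$ and $y$ lie in different classes, $x \neq y$, so $x \prec y$, i.e.\ $y \in U(x)$. Given arbitrary $x' \in \alpha_i$ and $y' \in \alpha_j$, the twin condition $x' \equiv x$ yields $U(x') = U(x)$, hence $y \in U(x')$, i.e.\ $x' \prec y$ and so $x' \in D(y)$; then $y' \equiv y$ yields $D(y') = D(y)$, hence $x' \in D(y')$, i.e.\ $x' \prec y'$, in particular $x' \preceq y'$. This shows that the validity of "$x \preceq y$" is independent of the chosen representatives, so $\alpha_i \preceq_\equiv \alpha_j$ is unambiguous. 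The degenerate case $\alpha_i = \alpha_j$ needs a separate word: by Theorem~\ref{classes}(a) a single class is an antichain, so within it the only instance of $\preceq$ is $x = x$, and $\alpha_i \preceq_\equiv \alpha_i$ is understood to hold via the choice $x = y$; this is what makes reflexivity consistent with the defining condition.

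With well-definedness available, the remaining steps are short. Reflexivity holds by the previous remark. For antisymmetry I would assume $\alpha_i \preceq_\equiv \alpha_j$ and $\alpha_j \preceq_\equiv \alpha_i$, pick any $x \in \alpha_i$ and $y \in \alpha_j$, obtain $x \preceq y$ and $y \preceq x$, and conclude $x = y$ by antisymmetry of $\preceq$, whence $\alpha_i = \alpha_j$. For transitivity I would assume $\alpha_i \preceq_\equiv \alpha_j$ and $\alpha_j \preceq_\equiv \alpha_k$ (the cases in which two of the indices coincide being trivial), pick $x \in \alpha_i$, $y \in \alpha_j$, $z \in \alpha_k$, obtain $x \preceq y$ and $y \preceq z$, and conclude $x \preceq z$ by transitivity of $\preceq$, hence $\alpha_i \preceq_\equiv \alpha_k$. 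The main obstacle — and the only place where the twin hypothesis actually does any work — is the well-definedness step: one must be careful that the argument handles all representatives at once and that the case $\alpha_i = \alpha_j$ is treated separately, since a naive "for all $x,y$" reading of the defining condition would otherwise be inconsistent with reflexivity.
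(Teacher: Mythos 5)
Your proof is correct and follows essentially the same route as the paper's: establish well-definedness first via the twin equalities $D(\cdot)$ and $U(\cdot)$, then deduce reflexivity, antisymmetry, and transitivity by passing to representatives. You are in fact slightly more careful than the paper on two small points — you explicitly note that $\alpha_i\ne\alpha_j$ forces $x\ne y$ so that the \emph{strict} sets $D,U$ apply, and you treat the degenerate case $\alpha_i=\alpha_j$ separately to reconcile the quantifier in the definition with reflexivity — but these are refinements, not a different argument.
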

\begin{proof} First we show that $\preceq_{\equiv}$ is well defined. Let $\alpha_i\preceq_{\equiv}\alpha_j$. By definition there exist $x\in\alpha_i$ and $y\in\alpha_j$ such that $x\preceq y$ in $P$.
Suppose that $x\equiv x'$ and $y\equiv y'$. If $x=y$ then $\alpha_i=\alpha_j$. Otherwise, $x\prec y$ implies $x\in D(y)=D(y')$, so $x\prec y'$, hence $y'\in U(x)=U(x')$ and $x'\prec y'$.  
Thus $\alpha_i\preceq_{\equiv}\alpha_j$ is independent of the choice of representatives.

Reflexivity is immediate, and transitivity follows from that of $\preceq$.  
If $\alpha_i\preceq_{\equiv}\alpha_j$ and $\alpha_j\preceq_{\equiv}\alpha_i$, then for some $x\in\alpha_i$, $y\in\alpha_j$, we have $x\preceq y$ and $y\preceq x$, hence $x=y$ and $\alpha_i=\alpha_j$.  
Therefore, $\preceq_{\equiv}$ is a partial order on $X_n/_{\equiv}$.
\end{proof}

We refer to $(X_n/_{\equiv},\preceq_{\equiv})$ as the {\it twin poset} of $P$, and denote it by $P_{\equiv}$.

\begin{corollary}\label{twinposet}
Let $P\in\mathcal{NL}(n)$ and let $X_n/_{\equiv}=\{\alpha_1,\dots,\alpha_r\}$ be the set of twin classes of $P$.
Then the twin poset $P_{\equiv}$ is isomorphic to the NL poset on $X_r=\{0,\ldots,r-1\}$ ordered by $i\preceq j$ if and only if
$\alpha_{i+1}\preceq_{\equiv}\alpha_{j+1}$ for $i,j\in\{0,\ldots,r-1\}$.
\end{corollary}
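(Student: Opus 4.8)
The plan is to realize the abstract twin poset $P_{\equiv}$ concretely on the label set $X_r$ and then to check that, after a suitable indexing of the twin classes, the resulting labelling is natural. By Theorem~\ref{welldef}, $P_{\equiv}=(X_n/_\equiv,\preceq_\equiv)$ is a poset on the $r$-element set $\{\alpha_1,\dots,\alpha_r\}$, so the map $\phi:X_n/_\equiv\to X_r$ sending $\alpha_i$ to $i-1$ is a bijection. Transporting $\preceq_\equiv$ along $\phi$ gives a relation $\preceq$ on $X_r$ defined by $i\preceq j$ if and only if $\alpha_{i+1}\preceq_\equiv\alpha_{j+1}$, and $\phi$ is by construction an order isomorphism $(X_n/_\equiv,\preceq_\equiv)\to(X_r,\preceq)$; reflexivity, antisymmetry, and transitivity of $\preceq$ are inherited from $P_{\equiv}$. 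Hence the only point requiring argument is that $\preceq$ is a \emph{natural} order, i.e. $i\preceq j$ implies $i\le j$, and this is exactly what will force the indexing $\alpha_1,\dots,\alpha_r$ to be chosen compatibly.

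The key step is the following separation property: if $\alpha_i\preceq_\equiv\alpha_j$ with $\alpha_i\ne\alpha_j$, then $x<y$ for every $x\in\alpha_i$ and every $y\in\alpha_j$. Indeed, Theorem~\ref{welldef} shows that the relation $\alpha_i\preceq_\equiv\alpha_j$ does not depend on the choice of representatives, so $x\preceq y$ in $P$ for all such $x,y$; since $\alpha_i$ and $\alpha_j$ are distinct blocks of the partition $X_n/_\equiv$ we have $x\ne y$, hence $x\prec y$, and because $P$ is naturally labelled this gives $x<y$. In particular $\min\alpha_i<\min\alpha_j$.

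To finish, I would index the twin classes so that $\min\alpha_1<\min\alpha_2<\cdots<\min\alpha_r$; this is possible because the $\alpha_i$ partition $X_n$, so their minima are pairwise distinct and can be listed in increasing order. Now suppose $i\preceq j$ in $(X_r,\preceq)$, that is $\alpha_{i+1}\preceq_\equiv\alpha_{j+1}$. If $\alpha_{i+1}=\alpha_{j+1}$ then $i=j$ and $i\le j$ trivially; otherwise the separation property gives $\min\alpha_{i+1}<\min\alpha_{j+1}$, whence $i+1<j+1$, i.e. $i<j$. Thus $\preceq$ is natural, so $(X_r,\preceq)\in\mathcal{NL}(r)$, and $\phi$ exhibits the required isomorphism $P_{\equiv}\cong(X_r,\preceq)$. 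I do not expect a serious obstacle here: the one substantive ingredient is the separation property, and everything else is bookkeeping about transport of structure together with the freedom to reindex the twin classes (equivalently, to pass to a linear extension of $P_{\equiv}$).
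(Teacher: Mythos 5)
Your proposal is correct and follows the same underlying approach as the paper's proof: both index the twin classes so that their minimum elements increase and then transport $\preceq_\equiv$ along the resulting bijection $X_n/_\equiv\to X_r$. The paper's argument is terse—it asserts that contracting each class to its minimum representative and listing these representatives in increasing order "by construction" yields the isomorphism—without explicitly verifying the point that your proof singles out, namely that $\alpha_i\prec_\equiv\alpha_j$ forces $\min\alpha_i<\min\alpha_j$. Your separation lemma (every $x\in\alpha_i$ is strictly below every $y\in\alpha_j$ in the natural integer order, via well-definedness from Theorem~\ref{welldef}, disjointness of blocks, and the NL property of $P$) is exactly the substantive step that makes the transported order on $X_r$ natural rather than merely a partial order; the paper leaves this implicit. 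So the two proofs take the same route, with yours spelling out a verification the paper omits.
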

\begin{proof}
For $i=0,\ldots,r-1$, choose a representative $x_i\in\alpha_{i+1}$ with $x_i=\min \alpha_{i+1}$ and write
$\alpha_{i+1}=\bar x_i$. Thus, we may assume that
$$
X_n/_{\equiv}=\{\bar x_0,\ldots,\bar x_{r-1}\},
\qquad
x_0<\cdots<x_{r-1}.
$$
The twin poset $P_{\equiv}$ is a subposet obtained from $P$ by contracting each twin
class $\bar x_i$ to its representative $x_i$.
By construction, $\bar x_i \preceq_{\equiv} \bar x_j$ if and only if $i\preceq j.$ Hence $P_{\equiv}$ is isomorphic to the NL poset $(X_r,\preceq)$, as required.
\end{proof}

\begin{theorem}\label{home} Let $P\in\mathcal{NL}(n)$ be an NL poset with twin classes $X_n/_\equiv=\{\alpha_1,\dots,\alpha_r\}$, and let $P_{\equiv}$ be the associated twin poset on $X_n/_\equiv$.  
For $\sigma\in \mathrm{Aut}(P)$, define a map $\pi:\mathrm{Aut}(P)\to \mathrm{Aut}(P_{\equiv})$ by
\begin{eqnarray*}\label{pisigma}
\pi(\sigma)(\alpha_i)=\sigma(\alpha_i):=\{\sigma(x)\mid x\in\alpha_i\},\quad (i=1,\ldots,r).
\end{eqnarray*}
Then $\pi$ is a group homomorphism and
\begin{eqnarray}\label{ker}
\ker(\pi)\cong \prod_{i=1}^r \mathfrak S_{|\alpha_i|},
\end{eqnarray}
where each symmetric group $\mathfrak S_{|\alpha_i|}$ acts on the twin class $\alpha_i$.
 \end{theorem}
\begin{proof} Let $P\in\mathcal{NL}(n)$. For $\sigma\in \mathrm{Aut}(P)$, define $\pi(\sigma)$ on $X_n/_\equiv$ by $\pi(\sigma)(\alpha_i)=\sigma(\alpha_i)$. 
First, we show that $\pi$ is well defined. Since $\sigma$ is an order-preserving
bijection on $X_n$, it preserves the twin relation: if $x\equiv y$, then $\sigma(x)\equiv \sigma(y)$. Hence $\sigma(\alpha_i)$ is again a twin class of $P$. Moreover, since
$\sigma$ is bijective, we have $|\sigma(\alpha_i)|=|\alpha_i|$. Thus $\pi(\sigma)$ is a well-defined bijection on the set $X_n/_\equiv$. It remains to check that $\pi(\sigma)$ preserves the order of $P_\equiv$.
Suppose $\alpha_i\preceq_\equiv \alpha_j$ in $P_\equiv$. Then, by definition, $x\preceq y$ for any $x\in\alpha_i$ and $y\in\alpha_j$. Since $\sigma$ is order-preserving, $\sigma(x)\preceq \sigma(y)$. Since $\sigma(x)\in\sigma(\alpha_i)$ and $\sigma(y)\in\sigma(\alpha_j)$,
we obtain $\sigma(\alpha_i)\preceq_\equiv \sigma(\alpha_j).$ Equivalently, 
$$\pi(\sigma)(\alpha_i)\preceq_\equiv \pi(\sigma)(\alpha_j).$$
Thus $\pi(\sigma)$ is an order-preserving bijection on $P_\equiv$. Therefore $\pi(\sigma)\in \mathrm{Aut}(P_\equiv)$, and hence $\pi$ is well defined.
\medskip

It is straightforward to show that $\pi$ is a group homomorphism. Let $\sigma,\tau\in {\rm Aut}(P)$. Then for each $\alpha_i\in X_n/{\equiv}$,
$$
\pi(\sigma\tau)(\alpha_i)
=(\sigma\tau)(\alpha_i)
=\sigma(\tau(\alpha_i))
=\pi(\sigma)\bigl(\pi(\tau)(\alpha_i)\bigr).
$$
Thus $\pi(\sigma\tau)=\pi(\sigma)\circ\pi(\tau)$, so $\pi$ is a group homomorphism.
\medskip

By definition, we have
$$\ker(\pi)=\{\sigma\in \mathrm{Aut}(P)\mid \pi(\sigma)(\alpha_i)=\sigma(\alpha_i)=\alpha_i\; \text{ for all } i=1,\ldots,r\}.$$ 
Thus every $\sigma\in \ker(\pi)$ permutes elements only within each twin class $\alpha_i$.
For each $i$, let $\mathfrak S_{|\alpha_i|}$ denote the symmetric group on $\alpha_i$.
Define a map
$$
\Phi:\ker(\pi)\longrightarrow \prod_{i=1}^r \mathfrak S_{|\alpha_i|}
$$
by $\Phi(\sigma) = (\sigma_1,\dots,\sigma_r)$, where $\sigma_i:=\sigma|_{\alpha_i}$ denotes the restriction of $\sigma$ to $\alpha_i$. 
Since every $\sigma\in \ker(\pi)$ satisfies $\sigma(\alpha_i)=\alpha_i$, each restriction $\sigma_i:\alpha_i\to\alpha_i$ is a bijection, hence $\sigma_i\in \mathfrak S_{|\alpha_i|}$.
Thus $\Phi$ is well-defined. To show that $\Phi$ is a group homomorphism, let $\sigma,\tau\in \ker(\pi)$. Since each $\alpha_i$ is $\sigma$- and $\tau$-invariant, we have $(\sigma\tau)_i= \sigma_i\circ \tau_i$. 
Hence 
$$
\Phi(\sigma\tau)=\bigl((\sigma\tau)_1,\ldots,(\sigma\tau)_r\bigr)=\bigl(\sigma_1\circ \tau_1,\ldots,\sigma_r\circ \tau_r\bigr)=\Phi(\sigma)\circ\Phi(\tau),
$$
as required. To prove injectivity, suppose $\Phi(\sigma)=\Phi(\tau)$. Then $\sigma_i=\tau_i$ for all $i$.
For any $x\in X_n$, there exists $i$ such that $x\in\alpha_i$, and hence
$$
\sigma(x)=\sigma_i(x)=\tau_i(x)=\tau(x).
$$
Thus $\sigma=\tau$, and $\Phi$ is injective.

To prove surjectivity, fix an ordering, $\alpha_i=\{x_{i1},\dots,x_{i|\alpha_i|}\}$. Let $\tau=(\tau_1,\dots,\tau_r)\in \prod_{i=1}^r \mathfrak S_{|\alpha_i|}$, and define $\sigma:X_n\to X_n$ by $\sigma(x_{ik}) := x_{i,\tau_i(k)}$. Since each $\tau_i$ is a permutation of $\{1,\dots,|\alpha_i|\}$,
$\sigma$ restricts to a bijection on each $\alpha_i$,
and hence $\sigma$ is a bijection on $X_n$. We show that $\sigma\in{\rm Aut}(P)$.
Let $x\in\alpha_i$ and $y\in\alpha_j$. If $i=j$, then $\sigma$ only permutes elements within the same twin class.
Since elements in a twin class have identical order relations,
all order relations are preserved. Let $i\neq j$. Then the relation between $x$ and $y$ depends only on the twin classes
$\alpha_i$ and $\alpha_j$. Since $\sigma(x)\in\alpha_i$, and $\sigma(y)\in\alpha_j$, we obtain $x\preceq y$ if and only if $\sigma(x)\preceq \sigma(y)$. Thus $\sigma$ preserves the order relation, so
$\sigma\in{\rm Aut}(P)$. Moreover, since $\sigma(\alpha_i)=\alpha_i$ for all $i$, we have $\sigma\in\ker(\pi)\subseteq{\rm Aut}(P)$.

Finally, we show that $\Phi(\sigma)=\tau$. For each $i$ and $k$,
$$
\sigma|_{\alpha_i}(x_{ik})
= \sigma(x_{ik})
= x_{i,\tau_i(k)}
= \tau_i(x_{ik}).
$$
Hence $\sigma_i=\tau_i$ so that $\Phi(\sigma)=(\sigma_1,\dots,\sigma_r)=(\tau_1,\dots,\tau_r)=\tau$. Therefore $\Phi$ is a group isomorphism, and we conclude that
$$
\ker(\pi)\cong \prod_{i=1}^r \mathfrak S_{|\alpha_i|},
$$
which proves (\ref{ker}).
\end{proof}

\begin{corollary}\label{surj} Let $\pi:{\rm Aut}(P)\to{\rm Aut}(P_\equiv)$ be the homomorphism defined in Theorem~\ref{home}. Then
$\pi$ is surjective if and only if every $\psi\in{\rm Aut}(P_{\equiv})$ preserves twin-class sizes,
that is,
$$
|\psi(\alpha_i)|=|\alpha_i|\quad\text{for all } \alpha_i\in X_n/_\equiv.
$$
\end{corollary}
\begin{proof} Assume that $\pi$ is surjective. Let $\psi\in{\rm Aut}(P_\equiv)$. Since $\pi$ is surjective, there exists $\sigma\in{\rm Aut}(P)$ such that
$\pi(\sigma)=\psi$. Suppose $\psi(\alpha_i)=\alpha_j$. Then, by the definition of $\pi$,
\[
\sigma(\alpha_i)=\pi(\sigma)(\alpha_i)=\psi(\alpha_i)=\alpha_j.
\]
Since $\sigma$ is a bijection on $X_n$, the restriction $\sigma|_{\alpha_i}:\alpha_i\to \alpha_j$ is a bijection. Hence
$$
|\alpha_i|=|\alpha_j|=|\psi(\alpha_i)|.
$$
Thus every $\psi\in{\rm Aut}(P_\equiv)$ preserves twin-class sizes.

\medskip
Conversely, assume that every $\psi\in{\rm Aut}(P_\equiv)$ preserves twin-class sizes. We show that ${\rm Im}(\pi)={\rm Aut}(P_\equiv)$. Clearly, ${\rm Im}(\pi)\subseteq {\rm Aut}(P_\equiv)$. Let $\psi\in{\rm Aut}(P_\equiv)$. With a fixed ordering of each twin class, $\alpha_i=\{x_{i1},\dots,x_{i|\alpha_i|}\}$,
define a map $\sigma:X_n\to X_n$ by $\sigma(x_{ik})=x_{jk}$ whenever $\psi(\alpha_i)=\alpha_j$.
This is well defined and bijective, since $\psi$ is a bijection on the set of
twin classes such that $|\alpha_i|=|\psi(\alpha_i)|=|\alpha_j|$. We claim that $\sigma\in{\rm Aut}(P)$.
Let $x\in\alpha_i$ and $y\in\alpha_{j}$. Then
$$
x\preceq y
\Longleftrightarrow
\alpha_i\preceq_\equiv \alpha_{j}
\Longleftrightarrow
\psi(\alpha_i)\preceq_\equiv \psi(\alpha_{j})
\Longleftrightarrow
\sigma(x)\preceq \sigma(y).
$$
Hence $\sigma$ preserves the order so that $\sigma\in{\rm Aut}(P)$. Finally, if $\psi(\alpha_i)=\alpha_j$, then
$\sigma(\alpha_i)=\alpha_j=\psi(\alpha_i)$. Therefore $\pi(\sigma)(\alpha_i)=\psi(\alpha_i)$ for every $i$, and hence $\pi(\sigma)=\psi$.
Thus $\psi\in{\rm Im}(\pi)$, which implies ${\rm Aut}(P_\equiv)\subseteq {\rm Im}(\pi)$, so $\pi$ is surjective.
Therefore, $\pi$ is surjective if and only if every automorphism of
$P_\equiv$ preserves twin-class sizes.
\end{proof}
\medskip

The {\it splitting lemma} is a fundamental theorem that characterizes split short exact sequences in the category of groups. This lemma asserts that for a short exact sequence $1 \to N \to G\xrightarrow{\;\pi\;} H \to 1$ where $N$ is normal in $G$, the sequence {\it splits} if there exists a group homomorphism $s:H\to G$ such that $\pi\circ s={\rm id}_H$. In this case, $G$ is isomorphic to the semidirect product $N\rtimes H$. In the following theorem, we give a canonical decomposition of the automorphism group of a NL poset. 

\begin{theorem}\label{aut} Let $\pi:{\rm Aut}(P)\to{\rm Aut}(P_\equiv)$ be the homomorphism defined in Theorem~\ref{home}. Then 
\begin{eqnarray}\label{decom}
{\rm Aut}(P)
\cong
\Bigl(\prod_{i=1}^r \mathfrak S_{|\alpha_i|}\Bigr)
\rtimes {\rm Im}(\pi).
\end{eqnarray}
\end{theorem}
 \begin{proof} Consider a homomorphism $\pi:{\rm Aut}(P)\to {\rm Aut}(P_\equiv)$ defined in Theorem~\ref{home}, that is,
$$
\pi(\sigma)(\alpha_i)=\sigma(\alpha_i)
\qquad(\alpha_i\in X_n/\!\equiv,\;\; i=1,\ldots,r).
$$
Let $N={\rm ker}(\pi)$ and $H={\rm Im}(\pi)\subseteq {\rm Aut}(P_{\equiv})$.
Then we have a short exact sequence
\begin{eqnarray}\label{seq}
1 \longrightarrow N \xrightarrow{\;f\;} {\rm Aut}(P) \xrightarrow{\;\pi\;} H \longrightarrow 1,
\end{eqnarray}
where ${\rm Im}(f)=N={\rm ker}(\pi)$. To show that this sequence splits, we construct a homomorphism $s:H\to{\rm Aut}(P)$ such that
$\pi\circ s={\rm id}_H$. Let $\psi\in H={\rm Im}(\pi)$. Then there exists a $\sigma\in{\rm Aut}(P)$
such that $\pi(\sigma)=\psi$. In particular, $\pi(\sigma)(\alpha_i)=\sigma(\alpha_i)=\psi(\alpha_i)=\alpha_j$ for some $j$, and since $\sigma|_{\alpha_i}$ is a bijection on $\alpha_i$, we have $|\alpha_i|=|\alpha_j|$. Now let $\alpha_i=\{x_{i1},\dots,x_{i|\alpha_i|}\}$. Define $s(\psi)(x_{ik}) := x_{jk}$ whenever $\psi(\alpha_i)=\alpha_j$.
Since elements in the same twin class have identical order relations
with all other elements, mapping the element $x_{ik}\in\alpha_i$ to the element $x_{jk}\in\alpha_j$ preserves all order relations.
Hence $s(\psi)\in{\rm Aut}(P)$. 

Moreover, we have
$$
s(\psi)(\alpha_i)
= \{\,s(\psi)(x_{ik}) \mid x_{ik}\in\alpha_i\}
= \{\,x_{jk} \mid k=1,\dots,|\alpha_j|\,\}
= \alpha_j.
$$
It follows that $\pi(s(\psi))(\alpha_i)= s(\psi)(\alpha_i)= \alpha_j= \psi(\alpha_i)$.
Since this holds for all twin classes $\alpha_i$, we conclude that $\pi(s(\psi))=\psi$. Therefore, $\pi\circ s = {\rm id}_H$.
For $\psi_1,\psi_2\in H$, suppose $\psi_2(\alpha_i)=\alpha_j$ and $\psi_1(\alpha_j)=\alpha_\ell$. Then $s(\psi_1\psi_2)(x_{ik})=x_{\ell k}$, while
$$
(s(\psi_1)\circ s(\psi_2))(x_{ik})
= s(\psi_1)(x_{jk})
= x_{\ell k}.
$$
Thus $s(\psi_1\psi_2)=s(\psi_1)\circ s(\psi_2)$ so that $s$ is a group homomorphism. Therefore, the exact sequence (\ref{seq}) splits and hence by splitting lemma, we obtain (\ref{decom})
as required.\end{proof}
  
The decomposition (\ref{decom}) is closely related to the {\it wreath products} \cite{Okada} of the symmetric groups where local symmetries and global symmetries combine through semidirect products. 
Recall that a wreath product combines a group acting independently on several copies of a set together with another group permuting those copies. 
Similarly, in our setting, each symmetric group $\mathfrak S_{|\alpha_i|}$ acts on the twin class $\alpha_i$, while the symmetry of ${\rm Im}(\pi)$ acts on the collection of twin classes themselves. 
The resulting decomposition is useful for understanding orbit structures
and Burnside-type enumeration formulas for nonisomorphic posets.

\begin{corollary}
Let $\pi:{\rm Aut}(P)\to{\rm Aut}(P_\equiv)$ be the homomorphism defined in
Theorem~\ref{home}. If $\pi$ is surjective, then
\[
\bigl|{\rm Aut}(P)\bigr|
=
\left(\prod_{i=1}^r |\alpha_i|!\right)
\bigl|{\rm Aut}(P_\equiv)\bigr|.
\]
\end{corollary}

We end this section with an example.
\begin{example}\label{ex2}{\rm Consider the following poset $P\in\mathcal{NL}(7)$ and its poset matrix $A$: 

{\centering
\begin{minipage}{0.55\textwidth}
\centering
\begin{tikzpicture}[
  scale=0.7,
  every node/.style={circle, draw, inner sep=0.5pt, minimum size=16pt, font=\small},
  edge/.style={line width=0.8pt}]
  % Nodes
  \node (0) at (0,0.2) {0};
  \node (1) at (-1.2,1.5) {1};
  \node[fill=red!30] (2) at (0.1,1.5) {2};
  \node[fill=red!30] (3) at (1.2,1.5) {3};
  \node (4) at (-1.2,3) {4};
  \node (5) at (1.2,3) {5};
  \node (6) at (0,4.0) {6};

  % Edges (cover relations)
  \draw[edge] (0) -- (1);
  \draw[edge] (0) -- (2);
  \draw[edge] (0) -- (3);

  \draw[edge] (1) -- (4);
  
  \draw[edge] (2) -- (5);
  \draw[edge] (3) -- (5);

  \draw[edge] (4) -- (6);
  \draw[edge] (5) -- (6);

\end{tikzpicture}
\end{minipage}%
\begin{minipage}{0.4\textwidth}
\centering
$$
A=
\begin{bmatrix}
1&0&0&0&0&0&0\\
1&1&0&0&0&0&0\\
1&0&{\red 1}&{\red 0}&0&0&0\\
1&0&{\red 0}&{\red 1}&0&0&0\\
1&1&0&0&1&0&0\\
1&0&1&1&0&1&0\\
1&1&1&1&1&1&1
\end{bmatrix}.
$$
\end{minipage}}
\medskip

\noindent It is easy to see that 
\begin{itemize}
\item $X_7/_\equiv=\big\{\alpha_1=\{0\},\;\alpha_2=\{1\},\;\alpha_3=\{2,3\},\;\alpha_4=\{4\},\;\alpha_5=\{5\},\;\alpha_6=\{6\}\big\}$.
\item $A[\alpha_1]=A[\alpha_2]=A[\alpha_4]=A[\alpha_5]=A[\alpha_6]=I_1,\;A[\alpha_3]=I_2.$
\end{itemize}
By Theorem~\ref{home}, $\ker(\pi)\cong \mathfrak S_{|\alpha_3|}=\mathfrak S_2$. Indeed, 
$$
{\rm Aut}(P)=\{{\rm id},\;1243567\}\cong \mathfrak S_2,\quad{\text{and}}\quad \ker(\pi)=\{{\rm id},\;1243567\}={\rm Aut}(P).
$$
The associated twin poset $P_\equiv$ and its poset matrix are given in

{\centering
\begin{minipage}{0.55\textwidth}
\centering
\begin{tikzpicture}[
  scale=0.4,
  edge/.style={line width=0.9pt}
]
  \node (a0) at (0,0) {$\alpha_1$};
  \node (a1) at (-1.6,1.8) {$\alpha_2$};
  \node (a23) at (1.6,1.8) {$\alpha_3$};
  \node (a4) at (-1.6,3.6) {$\alpha_4$};
  \node (a5) at (1.6,3.6) {$\alpha_5$};
  \node (a6) at (0,5.2) {$\alpha_6$};

   \draw[edge] (a0) -- (a1);
   \draw[edge] (a0) -- (a23);
   \draw[edge] (a1) -- (a4);
   \draw[edge] (a23) -- (a5);
   \draw[edge] (a4) -- (a6);
   \draw[edge] (a5) -- (a6);
\end{tikzpicture}
\end{minipage}%
\begin{minipage}{0.4\textwidth}
\centering
$$
A_\equiv=
\begin{bmatrix}
1&0&0&0&0&0\\
1&1&0&0&0&0\\
1&0&1&0&0&0\\
1&1&0&1&0&0\\
1&0&1&0&1&0\\
1&1&1&1&1&1
\end{bmatrix}.
$$
\end{minipage}
}
\medskip

On the other hand, ${\rm Aut}(P_\equiv)=\{{\rm id},\;132546\}\cong \mathfrak S_2$, where the automorphism $\tau:=132546$ exchanges
$\alpha_2\leftrightarrow \alpha_3$ and $\alpha_4\leftrightarrow \alpha_5$. However, $|\alpha_2|\ne|\alpha_3|$, so $\tau$ does not preserve twin-class sizes.
Hence, by Corollary \ref{surj} $\pi$ is not surjective. Indeed, ${\rm Im}(\pi)=\{{\rm id}\}$ and by Theorem \ref{aut},
$$
{\rm Aut}(P)\cong\ker(\pi)\rtimes {\rm Im}(\pi)=\mathfrak S_2\rtimes\{{\rm id}\}\cong\mathfrak S_2.
$$
}
\end{example}

\section{Generating NL posets through topological growth of distributive Lattices}

In this section, we formalize a relationship between $v$-extensions of poset matrices of naturally labeled posets and naturally labeled topologies. The key bridge is Birkhoff's fundamental theorem for finite distributive lattices. 

By convention, each subset $S$ of $X_n$ is denoted by the weakly increasing word of its elements. For instance, $v=(1\;0\;1\;1\;0\;0\;0)\in{\mathbb B}^7$ is the characteristic vector of the
subset $\{0, 2, 3\}\subseteq X_7$ and it is denoted by $023$, and $\emptyset$ is denoted by the empty
word~$\epsilon$. For any lattice
$L$, an element $x$ is said to be {\it join-irreducible} if $x$ is not the join of a finite set of other elements. Let $\SetJoinIrreducibles(L)$ denote the set of join-irreducible elements of $L$, and recall that $\SetIdeals(P)$ is the set of order ideals of the poset $P$. This set $\SetIdeals(P)$ is a lattice ordered by set inclusion.

The fundamental theorem \cite{Birkhoff} of finite distributive lattices is stated as follows.

\begin{theorem} [Birkhoff's theorem]\label{Birk1} For any finite distributive lattice $L$, there is a unique poset $P$ such that $L$ is
isomorphic as a poset to the set $\SetIdeals(P)$ ordered by set inclusion.
\end{theorem}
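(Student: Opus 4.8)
The plan is to establish existence by exhibiting $P$ explicitly and uniqueness by recovering $P$ intrinsically from the lattice. For existence, I would take $P:=\SetJoinIrreducibles(L)$, the set of join-irreducible elements of $L$ partially ordered by the restriction of the order of $L$, and define
$$
\eta:L\longrightarrow\SetIdeals(P),\qquad \eta(x):=\{\,p\in\SetJoinIrreducibles(L)\mid p\le x\,\}.
$$
First I would check that $\eta$ is well defined: $\eta(x)$ is downward closed in $P$ since $p\le x$ and $q\le p$ force $q\le x$, so $\eta(x)$ is an order ideal; and $\eta$ is clearly order-preserving. To see that $\eta$ is injective, I would invoke the elementary fact, valid in any finite lattice, that every $x\in L$ is the join of the join-irreducibles below it, so that $x=\bigvee\eta(x)$ and hence $x$ is recovered from $\eta(x)$.

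The heart of the existence part is surjectivity, and this is the only place where distributivity is genuinely used. Given an order ideal $I\in\SetIdeals(P)$, set $x:=\bigvee I\in L$; then $I\subseteq\eta(x)$ is immediate. For the reverse inclusion, let $p\in\eta(x)$, so $p$ is join-irreducible and $p\le x=\bigvee_{q\in I}q$. Distributivity gives
$$
p=p\wedge\bigvee_{q\in I}q=\bigvee_{q\in I}(p\wedge q),
$$
and join-irreducibility of $p$ forces $p=p\wedge q$, i.e.\ $p\le q$, for some $q\in I$; since $I$ is a down-set, $p\in I$. Thus $\eta(x)=I$, so $\eta$ is onto. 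Finally $\eta$ is an order isomorphism, since its inverse is order-preserving: $\eta(x)\subseteq\eta(y)$ implies $x=\bigvee\eta(x)\le\bigvee\eta(y)=y$. This yields $L\cong\SetIdeals(P)$.

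For uniqueness, the idea is that the ``coordinate poset'' of a distributive lattice of ideals can be read off the lattice itself. I would observe that an order ideal of a poset $P$ is join-irreducible in $\SetIdeals(P)$ precisely when it is a principal ideal $\downarrow p$ for some $p\in P$ (equivalently, when it has a unique maximal element), and that $\downarrow p\subseteq\downarrow q$ if and only if $p\le q$ in $P$. Hence $p\mapsto\downarrow p$ is a poset isomorphism $P\cong\SetJoinIrreducibles(\SetIdeals(P))$. Consequently, if $L\cong\SetIdeals(P)$ then $P\cong\SetJoinIrreducibles(L)$, so $P$ is determined up to isomorphism by $L$ alone; together with the construction above, this proves the theorem.

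I expect the main obstacle to be the surjectivity step: it is the unique point at which the distributive hypothesis is invoked, and it relies on the finite-join distributive law $p\wedge\bigvee_i q_i=\bigvee_i(p\wedge q_i)$ applied to $x=\bigvee I$ together with the definition of join-irreducibility. Everything else --- well-definedness, monotonicity of $\eta$ and of $\eta^{-1}$, the ``join of the join-irreducibles below'' lemma, and the entire uniqueness argument --- holds in any finite lattice and is routine.
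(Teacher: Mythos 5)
Your proof is correct and is the standard textbook argument (essentially as in Davey--Priestley, which the paper cites as \cite{Davey}). The paper itself states this result without proof, treating it as a classical theorem of Birkhoff; so there is no paper proof to compare against. For what it is worth, your uniqueness argument is precisely the content of the paper's subsequent Theorem~\ref{Birk2} ($P\cong\SetJoinIrreducibles(\SetIdeals(P))$), which the paper also records without proof. One small point worth making explicit in a written-up version: the paper's convention is that the bottom element is \emph{not} join-irreducible (being the empty join), which is what makes your surjectivity and injectivity steps go through cleanly in the edge case $I=\emptyset$, $x=\hat 0$; your argument is consistent with that convention, but it deserves a sentence.
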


By Birkhoff's theorem, every finite lattice $L$ is isomorphic to the {\it ideal lattice}, $\SetIdeals(P)$ for some poset $P$ (see Fig. 1). An important property related with this theorem is the following.
\begin{theorem}\cite{Birkhoff}
For any finite poset $P$, the subposet $\SetJoinIrreducibles(\SetIdeals(P))$ of
$\SetIdeals(P)$ is isomorphic to $P$.
\end{theorem}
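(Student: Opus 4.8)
The plan is to exhibit an explicit order-isomorphism between $P$ and the subposet $\SetJoinIrreducibles(\SetIdeals(P))$ of join-irreducible ideals, and the natural candidate is the principal-ideal map $x \mapsto\ \downarrow x$. First I would check that this map lands where it should: $\downarrow x$ is certainly an order ideal, and it is join-irreducible in $\SetIdeals(P)$ because $x$ is its unique maximal element, so $\downarrow x$ cannot be written as the union (join) of strictly smaller ideals — any such union would omit $x$. This uses only that joins in $\SetIdeals(P)$ are unions of ideals.

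Next I would show the map is injective and order-reflecting simultaneously: $\downarrow x \subseteq\ \downarrow y$ holds if and only if $x \preceq y$. The ``if'' direction is transitivity of $\preceq$; the ``only if'' direction is immediate since $x \in\ \downarrow x \subseteq\ \downarrow y$ forces $x \preceq y$. In particular $\downarrow x =\ \downarrow y$ forces $x \preceq y \preceq x$, hence $x = y$ by antisymmetry. So the map is an order-embedding of $P$ into $\SetJoinIrreducibles(\SetIdeals(P))$.

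The remaining and main point is surjectivity: every join-irreducible order ideal $J$ equals $\downarrow x$ for some $x \in P$. Write $J$ as the union of the principal ideals of its elements, $J = \bigcup_{x \in J}\ \downarrow x$, which expresses $J$ as a join in $\SetIdeals(P)$ of elements $\downarrow x \le J$. By join-irreducibility of $J$, one of these must already equal $J$, so $J =\ \downarrow x$ for some $x \in J$. (If $J = \emptyset$ it is the empty join, hence not join-irreducible under the convention used here, so this case does not arise.) This shows the map is onto $\SetJoinIrreducibles(\SetIdeals(P))$.

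Combining the three steps, $x \mapsto\ \downarrow x$ is an order-isomorphism $P \xrightarrow{\ \sim\ } \SetJoinIrreducibles(\SetIdeals(P))$, which is the claim. I expect the only subtle point to be the bookkeeping around the empty ideal and the convention that the bottom element of a lattice is not join-irreducible (it is the join of the empty set); once that is fixed, each step is a one-line verification. Note that this theorem is essentially the converse companion to Birkhoff's theorem (\tref{Birk1}) and the two together give the full correspondence between finite posets and finite distributive lattices.
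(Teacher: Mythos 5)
The paper states this theorem without proof, treating it as the standard companion to Birkhoff's representation theorem (\tref{Birk1}), so there is no in-paper argument to compare against. Your proof via the principal-ideal map $x \mapsto\ \downarrow x$ is the classical one and is correct: $\downarrow x$ is join-irreducible because its unique maximal element $x$ cannot be covered by any union of strictly smaller ideals; the equivalence $\downarrow x \subseteq\ \downarrow y \iff x \preceq y$ gives an order-embedding; and surjectivity follows by writing any join-irreducible $J$ as the finite join $\bigcup_{x\in J}\downarrow x$ and invoking irreducibility to force $J =\ \downarrow x$ for some $x$. Your remark about the empty ideal is also consistent with the paper's definition (``not the join of a finite set of other elements''), under which the bottom of $\SetIdeals(P)$ is the empty join and hence excluded from $\SetJoinIrreducibles(\SetIdeals(P))$.
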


For instance, by considering the poset $P\in\mathcal{NL}(7)$ in Example \ref{ex2}, we have:
\begin{itemize}
\item $L:=J(P)=\{\epsilon,0,01,02,03,014,012,013,023,0124,0134,0123,0235,01234,01235,012345,0123456\}$;
\item $J{\rm Irr}(L) = \{0,01,02,03,014,0235,0123456\}\cong P$.
\end{itemize}

\usetikzlibrary{positioning}
\tikzset{
  pn/.style={circle,inner sep=1.5pt,draw,fill=black},
  newn/.style={circle,inner sep=1.5pt,draw,fill=red!12},
  hasse/.style={line width=0.6pt},          
  redge/.style={line width=1pt,red},        
}

\begin{center}
\begin{tikzpicture}[
  scale=0.8,
  edge/.style={line width=0.8pt}]
  % Nodes
    \node (0hat) at (0,0) {$\epsilon$};
    
    \node (0) at (0,1) {{\red 0}};
    
    \node (1) at (-2,2) {{\red 01}};
    \node (2) at (0,2) {{\red 02}};
    \node (3) at (2,2) {{\red 03}};
    
    \node (12) at (-2,3) {012};
    \node (13) at (0,3) {013};
    \node (23) at (2,3) {023};
    \node (4) at (-4,3) {{\red 014}};

    \node (123) at (0,4) {0123};
    \node (34) at (-2,4) {0134};
    \node (24) at (-4,4) {0124};
    \node (5) at (2,4) {{\red 0235}};

    \node (15) at (0,5) {01235};
    \node (234) at (-2,5) {01234};

    \node (45) at (-2,6) {012345};
    
    \node (6) at (-2,7) {{\red 0123456}};
  
  % Edges (cover relations)
    \draw[edge] (0hat) -- (0);
    
    \draw[edge] (0) -- (1);
    \draw[edge] (0) -- (2);
    \draw[edge] (0) -- (3);

    \draw[edge] (1) -- (12);
    \draw[edge] (1) -- (13);
    \draw[edge] (2) -- (12);
    \draw[edge] (2) -- (23);
    \draw[edge] (3) -- (13);
    \draw[edge] (3) -- (23);
    \draw[edge] (1) -- (4);

    \draw[edge] (12) -- (123);
    \draw[edge] (12) -- (24);
    \draw[edge] (13) -- (123);
    \draw[edge] (13) -- (34);
    \draw[edge] (23) -- (123);
    \draw[edge] (23) -- (5);
    \draw[edge] (4) -- (24);
    \draw[edge] (4) -- (34);

    \draw[edge] (24) -- (234);
    \draw[edge] (34) -- (234);
    \draw[edge] (123) -- (234);
    \draw[edge] (123) -- (15);
    \draw[edge] (5) -- (15);

    \draw[edge] (15) -- (45);
    \draw[edge] (234) -- (45);

    \draw[edge] (45) -- (6);
\end{tikzpicture}

Fig. 1: Ideal lattice $L=\SetIdeals(P)$ and its sublattice $\SetJoinIrreducibles(L)\cong P$ (in red).  
\end{center}

\medskip

We now turn to the Birkhoff's theorem from the matrix point of view. We begin by obtaining a matrix characterization of poset vectors of the poset matrix $A$, which are exactly same as the characteristic vectors of order ideals of the poset $P_A$.

\begin{theorem}\label{fixedpoint}
Let $A\in\mathcal{PM}(n)$ be a poset matrix. Then $v\in {\mathbb B}^n$ is a poset vector of $A$ if and only if $vA = v$ under Boolean matrix product.
Equivalently,
$$
\SetIdeals(P_A)\;=\;\{\,\mathrm{supp}(v)\subseteq X_n\mid vA = v\,\}.
$$
\end{theorem}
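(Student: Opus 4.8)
The plan is to deduce the statement from Theorem~\ref{ppp}, which already matches poset vectors of $A$ with characteristic vectors of order ideals of $P_A$. It therefore suffices to establish the Boolean identity: for $v\in\{0,1\}^n$ one has $vA=v$ if and only if $\mathrm{supp}(v)$ is an order ideal of $P_A$.

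First I would compute $vA$ explicitly. Writing $A=[a_{ij}]$ with rows $r_0,\dots,r_{n-1}$, the Boolean product gives $vA=\sum_{i\in\mathrm{supp}(v)}r_i$, so that $\mathrm{supp}(vA)=\bigcup_{i\in\mathrm{supp}(v)}\mathrm{supp}(r_i)$. Invoking the identification $\mathrm{supp}(r_i)=\downarrow i$ noted just before Theorem~\ref{ppp}, this yields
$$
\mathrm{supp}(vA)=\bigcup_{i\in\mathrm{supp}(v)}\downarrow i,
$$
i.e.\ $\mathrm{supp}(vA)$ is exactly the downward closure of $\mathrm{supp}(v)$ in $P_A$, the smallest order ideal containing $\mathrm{supp}(v)$. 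Because $A$ carries $1$'s on its diagonal, $r_i$ always contains the coordinate $i$, hence $v\le vA$ entrywise and $\mathrm{supp}(v)\subseteq\mathrm{supp}(vA)$ with no hypothesis on $v$.

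Next I would conclude. From the above, $vA=v$ is equivalent to the reverse inclusion $\mathrm{supp}(vA)\subseteq\mathrm{supp}(v)$, that is, to $\bigcup_{i\in\mathrm{supp}(v)}\downarrow i\subseteq\mathrm{supp}(v)$, which is exactly the statement that $\mathrm{supp}(v)$ is downward closed, i.e.\ an order ideal of $P_A$; the converse direction is immediate since for an order ideal $J$ one has $\bigcup_{i\in J}\downarrow i=J$. Feeding this equivalence into Theorem~\ref{ppp} proves the first assertion, and the displayed description of $\SetIdeals(P_A)$ then follows because $\mathrm{supp}$ is a bijection between poset vectors of $A$ and order ideals of $P_A$.

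I do not anticipate a genuine obstacle; the only delicate point is bookkeeping in the Boolean semiring, namely verifying that a left product by $A$ is the union of the supports of the selected rows and that the diagonal $1$'s force $v\le vA$, after which the claim is a reformulation of ``downward closed''. A shorter alternative bypasses the support computation using idempotency: since $A^2=A$, for every $v$ the vector $vA$ satisfies $(vA)A=vA$, and conversely any fixed point $v$ of right multiplication by $A$ is a Boolean sum of rows of $A$ (namely $v=vA=\sum_{i\in\mathrm{supp}(v)}r_i$); hence $\{v:vA=v\}=\mathsf{Row}_{\mathbb B}(A)=\mathsf{PV}_n(A)$ by Theorem~\ref{rowspace}. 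One may also note in passing that $vA$ is the least poset vector $\ge v$, the matrix shadow of ``the order ideal generated by a subset''.
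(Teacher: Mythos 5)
Your main argument is correct and coincides with the paper's proof: both hinge on the same two observations, namely that the $1$'s on the diagonal of $A$ force $v\le vA$ entrywise, and that $vA\le v$ is exactly the statement that $\mathrm{supp}(v)$ is downward closed — you phrase the second point via the identity $\mathrm{supp}(vA)=\bigcup_{i\in\mathrm{supp}(v)}\downarrow i$ while the paper checks it componentwise through $(vA)_j=\sum_i v_i a_{ij}$, but the mathematical content is identical. The idempotency shortcut you append is a legitimate alternative — $vA=v$ holds iff $v\in\mathsf{Row}_{\mathbb B}(A)$ since $A^2=A$ gives $r_iA=r_i$, and then Theorem~\ref{rowspace} identifies this row space with $\mathsf{PV}_n(A)$ — but note that Theorem~\ref{rowspace} is itself deduced from Theorem~\ref{ppp}, so this route is slicker rather than independent.
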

\begin{proof}  Let $A=[a_{i,j}]\in {\cal PM}(n)$ and $v=(v_0,\ldots,v_{n-1})$ be a poset vector of $A$. Then
\begin{eqnarray}\label{fix}
(vA)_j=\sum_{i\in X_n} v_i a_{i,j}=1
\iff \exists i\in \mathrm{supp}(v)\ \text{with}\ j\preceq i.
\end{eqnarray}
By Theorem \ref{ppp}, $\mathrm{supp}(v)$ is an order ideal of $P_A$ so that $j\in \mathrm{supp}(v)$. Hence $(vA)_j\le v_j$ for all $j\in X_n$. In addition, since $A$ has ones on the diagonal, 
$$(vA)_j=\sum_{i}v_i a_{i,j}\ge v_j a_{j,j}=v_j,$$
so that $(vA)_j\ge v_j$ for all $j\in X_n$. Consequently, if $v$ is a poset vector of $A$ then $vA=v$.

Conversely, assume $vA=v$ for a row vector $v\in{\mathbb B}^n$. If $i\in\mathrm{supp}(v)$ and $j\preceq i$, then it follows from (\ref{fix}) that $(vA)_j=1$, so $j\in \mathrm{supp}(v)$. Therefore $\mathrm{supp}(v)$ is downward closed and it is an order ideal of $P_A$. By Theorem \ref{ppp}, $v$ is a poset vector of $A$.
\end{proof}

For a poset matrix $A\in\mathcal{PM}(n)$, define
$$
L_A \;:=\; \{\,x\in {\mathbb B}^n \mid xA=x \,\}.
$$

\begin{theorem}\label{Birk3} Let $A\in\mathcal{PM}(n)$ be a poset matrix. Then $L_A$ is a finite distributive lattice with the componentwise Boolean join $\vee$ and meet $\wedge$. Moreover, $L_A\cong \SetIdeals(P_A)$.
\end{theorem}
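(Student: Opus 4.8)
The plan is to establish the lattice structure on $L_A$ directly and then invoke the bijection with $\SetIdeals(P_A)$ already set up in Theorems~\ref{ppp} and~\ref{fixedpoint}. First I would observe that by Theorem~\ref{fixedpoint} we have $L_A = \mathsf{PV}_n(A)$ as sets, and that $\mathrm{supp}$ restricts to a bijection $L_A \to \SetIdeals(P_A)$. The key point to verify is that this bijection is an order isomorphism when $L_A$ carries the componentwise order $\le$ on $\{0,1\}^n$ and $\SetIdeals(P_A)$ carries inclusion: this is immediate since $u \le v$ in $\{0,1\}^n$ if and only if $\mathrm{supp}(u) \subseteq \mathrm{supp}(v)$. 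Since $\SetIdeals(P_A)$ is a well-known distributive lattice (order ideals are closed under union and intersection), transporting this structure along the order isomorphism makes $L_A$ a distributive lattice.

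The remaining task is to check that the lattice operations on $L_A$ are genuinely the componentwise Boolean operations, i.e.\ that $L_A$ is closed under componentwise $\vee$ and $\wedge$ and that these realize the join and meet. For the join: if $u A = u$ and $v A = v$, then $(u \vee v)A = uA \vee vA = u \vee v$ by distributivity of Boolean matrix multiplication over componentwise $+$, so $u \vee v \in L_A$; and $\mathrm{supp}(u \vee v) = \mathrm{supp}(u) \cup \mathrm{supp}(v)$, which is exactly the join in $\SetIdeals(P_A)$. For the meet, componentwise $\wedge$ corresponds to $\mathrm{supp}(u) \cap \mathrm{supp}(v)$, which is again an order ideal, hence lies in $L_A$ by Theorem~\ref{fixedpoint}; I should note that one does \emph{not} expect $(u\wedge v)A = u \wedge v$ to follow from a naive algebraic manipulation (Boolean matrix product does not distribute over $\wedge$), so the cleanest argument routes through the ideal interpretation rather than through direct matrix algebra. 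Finiteness is trivial since $L_A \subseteq \{0,1\}^n$.

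The main obstacle, such as it is, is purely presentational: making sure the meet operation is handled via the order-ideal correspondence (where intersection of ideals is obviously an ideal) rather than attempting a direct Boolean-matrix verification, since $(u \wedge v)A = u \wedge v$ is not an algebraic triviality. Once that is flagged, everything else follows from the bijection $\mathrm{supp}$ of Theorem~\ref{ppp} together with the fixed-point characterization of Theorem~\ref{fixedpoint}, and the distributivity is inherited from $\SetIdeals(P_A)$. I would close by remarking that this is precisely the matrix-side incarnation of Birkhoff's theorem (Theorem~\ref{Birk1}): $L_A$ recovers the ideal lattice of $P_A$, and hence $\SetJoinIrreducibles(L_A) \cong P_A$ by Theorem~\ref{Birk2}.
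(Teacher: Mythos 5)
Your proof is correct and follows essentially the same route as the paper's: identify $L_A$ with $\mathsf{PV}_n(A)$ via Theorem~\ref{fixedpoint}, use $\mathrm{supp}$ to get an order isomorphism onto $\SetIdeals(P_A)$, and transport the distributive lattice structure back. The paper simply asserts that ``it is easily shown'' that $\Phi$ preserves joins and meets, whereas you supply the missing detail --- correctly flagging that closure of $L_A$ under componentwise $\vee$ is a direct algebraic consequence of $uA=u$ and $vA=v$, while closure under $\wedge$ is cleanest via the ideal correspondence since Boolean matrix multiplication does not distribute over $\wedge$ in general --- so your version is a more careful rendering of the same argument rather than a different one.
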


\begin{proof} Let $x\in L_A$. By Theorem~\ref{fixedpoint}, $x$ is a poset vector of $A$, Equivalently, by Proposition \ref{ppp}, $\mathrm{supp}(x)$ is an order ideal of $P_A$.
Thus the map $\Phi:L_A\to \SetIdeals(P_A)$ defined by $x\mapsto\mathrm{supp}(x)$ is a bijection. Moreover, for $x,y\in L_A$ 
it is easily shown that $\Phi(x\vee y)=\Phi(x)\cup \Phi(y)$ and $\Phi(x\wedge y)=\Phi(x)\cap \Phi(y)$, whence $\Phi$ is a lattice isomorphism.
Since $\SetIdeals(P_A)$ is a finite distributive lattice, so is $L_A$.
\end{proof}

 We call $L_A$ the {\it ideal lattice} of $P_A$. A poset vector $v\in {\mathbb B}^n$ of $A\in{\cal PM}(n)$ is said to be \emph{join-irreducible} if $v=u+w$ then $v=u$ or $v=w$, exclude $v={\bf 0}$. In the following proposition, we obtain a matrix characterization of join-irreducible poset vectors of $A$.

\begin{theorem}\label{Birk4} Let $A\in\mathcal{PM}(n)$ be a poset matrix with row vectors $r_0,\ldots,r_{n-1}$. Then $v$ is a join-irreducible poset vector of $A$ if and only if $v$ is a row vector of $A$.
\end{theorem}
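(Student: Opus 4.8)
The plan is to characterize join-irreducible poset vectors via the bijection of Theorem~\ref{ppp} between poset vectors and order ideals, which sends $v$ to $J := \mathrm{supp}(v)$. Under this correspondence, $v = u + w$ translates to $J = \mathrm{supp}(u) \cup \mathrm{supp}(w)$, so $v$ is join-irreducible (in the sense defined just before the statement) exactly when the order ideal $J$ cannot be written as a union of two strictly smaller order ideals. The key classical fact I would invoke is that the join-irreducible order ideals of a finite poset $P_A$ are precisely the principal ideals ${\downarrow}\,i$ for $i \in P_A$; and by the remark following Theorem~\ref{thm333}, ${\downarrow}\,i = \mathrm{supp}(r_i)$. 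So the statement amounts to showing: $v$ is a join-irreducible poset vector of $A$ if and only if $\mathrm{supp}(v)$ is a principal order ideal of $P_A$, equivalently $v = r_i$ for some $i$.

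For the forward direction, suppose $v$ is a join-irreducible poset vector and let $J = \mathrm{supp}(v)$, which is nonempty and an order ideal by Theorem~\ref{ppp}. By Theorem~\ref{thm333}, writing $S$ for the antichain of maximal elements of $J$, we have $v = \sum_{i \in S} r_i$. If $|S| \ge 2$, pick $i_0 \in S$ and split $S = \{i_0\} \sqcup (S \setminus \{i_0\})$; then $v = r_{i_0} + \big(\sum_{i \in S \setminus \{i_0\}} r_i\big)$ exhibits $v$ as a sum of two poset vectors, each with strictly smaller support than $v$ (since distinct maximal elements of $J$ are incomparable, neither summand can recover all of $J$). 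This contradicts join-irreducibility, so $|S| = 1$, say $S = \{i\}$, and hence $v = r_i$. For the converse, suppose $v = r_i$ and $r_i = u + w$ with $u, w$ poset vectors. Then $\mathrm{supp}(u), \mathrm{supp}(w) \subseteq \mathrm{supp}(r_i) = {\downarrow}\,i$, and since $i \in \mathrm{supp}(r_i) = \mathrm{supp}(u) \cup \mathrm{supp}(w)$, we have $i \in \mathrm{supp}(u)$ (say); but $\mathrm{supp}(u)$ is an order ideal containing $i$, so ${\downarrow}\,i \subseteq \mathrm{supp}(u)$, forcing $\mathrm{supp}(u) = {\downarrow}\,i = \mathrm{supp}(r_i)$ and hence $u = r_i = v$. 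Thus $r_i$ is join-irreducible.

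I would also note, to make the statement fully self-contained, that each $r_i$ is genuinely a poset vector: its support is ${\downarrow}\,i$, a (principal) order ideal, so Theorem~\ref{ppp} applies; alternatively $r_i A = r_i$ by idempotence of $A$ together with Theorem~\ref{fixedpoint}. The main (minor) obstacle is not any deep difficulty but rather the bookkeeping in the forward direction: one must be careful that when $|S| \ge 2$ the two pieces really do have strictly smaller support, which relies on the maximality/incomparability of the elements of $S$ so that omitting one maximal element genuinely removes it from the union of down-sets. Everything else is a direct translation through the support bijection, using Theorems~\ref{ppp} and~\ref{thm333} that are already established.
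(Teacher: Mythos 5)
Your proposal is correct and follows essentially the same route as the paper: translate via $\mathrm{supp}$ (Theorem~\ref{ppp}) to the ideal lattice $\SetIdeals(P_A)$ and identify join-irreducibles with principal ideals $\downarrow i = \mathrm{supp}(r_i)$. The only difference is one of detail: the paper simply asserts the standard fact that join-irreducible ideals of a finite poset are exactly the principal ones, whereas you actually prove it, using the antichain decomposition $v = \sum_{i\in S} r_i$ from Theorem~\ref{thm333} and the incomparability of maximal elements to exhibit a nontrivial decomposition when $|S|\ge 2$; this makes the argument self-contained where the paper relies on cited background.
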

\begin{proof} Let $v$ be a poset vector of $A\in{\cal PM}(n)$. By definition, $v$ is join-irreducible if and only if
${\rm{supp}}(v)$ is join-irreducible in the ideal lattice $\SetIdeals(P_A)$. Thus join-irreducible poset vectors correspond exactly to the characteristic vectors of principal order ideals of $P_A$, which are row vectors $r_i$ for $i\in X_n$ of $A$.
\end{proof}

By Theorems \ref{Birk1}-\ref{Birk4}, the matrix representation theorem for finite distributive lattices can be stated as follows.  

\begin{theorem}\label{MRB} {\rm (Matrix representation of Birkhoff's theorem)} For any finite distributive lattice $L$, there is a unique poset matrix $A\in{\cal PM}(n)$ such that 
$$ 
L \cong \{\,x\in {\mathbb B}^n\mid xA=x\,\}.
$$
Moreover, $\SetJoinIrreducibles(L)\cong \{r_i\in {\mathbb B}^n\mid i\in X_n\}$ where $r_i$ denotes the $i$th row vector of $A$.
\end{theorem}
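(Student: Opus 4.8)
The plan is to assemble Theorems~\ref{Birk1}--\ref{Birk4} into a single statement, using Theorem~\ref{fixedpoint} and Theorem~\ref{Birk3} to translate the abstract Birkhoff correspondence into the matrix setting. For existence, start from a finite distributive lattice $L$: Birkhoff's theorem (Theorem~\ref{Birk1}) provides a poset $P$ with $L\cong\SetIdeals(P)$, and Theorem~\ref{Birk2} shows that this $P$ has exactly $n:=\bigl|\SetJoinIrreducibles(L)\bigr|$ elements. Since every finite poset is isomorphic to a naturally labeled one \cite{Dean}, we may assume $P\in\mathcal{NL}(n)$ and put $A:=A_P\in\mathcal{PM}(n)$. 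By Theorem~\ref{fixedpoint} the fixed set $\{x\in\{0,1\}^n\mid xA=x\}$ consists exactly of the characteristic vectors of order ideals of $P_A=P$, and Theorem~\ref{Birk3} promotes this to a lattice isomorphism $L_A\cong\SetIdeals(P_A)\cong\SetIdeals(P)\cong L$. This produces the desired $A$.

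For the second assertion, compose the isomorphism $L\cong L_A$ just obtained with the isomorphism $\Phi\colon L_A\to\SetIdeals(P_A)$, $x\mapsto\mathrm{supp}(x)$, of Theorem~\ref{Birk3}. A lattice isomorphism sends join-irreducible elements to join-irreducible elements, hence restricts to a poset isomorphism between the respective subposets of join-irreducibles. By Theorem~\ref{Birk4}, the join-irreducible poset vectors of $A$ are precisely the row vectors $r_0,\dots,r_{n-1}$, so $\SetJoinIrreducibles(L)\cong\{r_i\mid i\in X_n\}$ as posets (and both are isomorphic to $P$ by Theorem~\ref{Birk2}).

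It remains to address uniqueness. Suppose $A,B\in\mathcal{PM}(n)$ both satisfy $L\cong L_A$ and $L\cong L_B$. By Theorem~\ref{Birk3}, $\SetIdeals(P_A)\cong L\cong\SetIdeals(P_B)$, so Theorem~\ref{Birk2} gives $P_A\cong\SetJoinIrreducibles(\SetIdeals(P_A))\cong\SetJoinIrreducibles(\SetIdeals(P_B))\cong P_B$ as abstract posets; in particular $n$ is forced to equal $\bigl|\SetJoinIrreducibles(L)\bigr|$. Since $P_A$ and $P_B$ are naturally labeled posets on $X_n$ that are order-isomorphic, the order isomorphism is realized by a permutation matrix $Q\in\mathfrak{P}_n$ with $A=Q^{T}BQ$, i.e.\ $A\cong B$ in $\mathcal{PM}(n)$. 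This is the natural sense of uniqueness here, parallel to the ``unique up to isomorphism'' in Theorem~\ref{Birk1}.

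The step I expect to require the most care is this last one: a poset matrix is not literally unique, since distinct natural labelings of the isomorphism type of $P$ yield distinct but permutation-similar matrices, so the argument must move cleanly between the relations ``isomorphic as posets'' and ``permutation-similar as matrices'', using Proposition~\ref{pro1} and the observation from Section~1 that $A\cong B$ if and only if $P_A\cong P_B$. The remaining steps are routine once Theorems~\ref{Birk1}--\ref{Birk4} are available.
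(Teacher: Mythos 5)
Your assembly is correct and matches what the paper has in mind: the paper gives no standalone proof of this theorem, stating only that it follows from Theorems~\ref{Birk1}--\ref{Birk4}, and your argument is precisely that assembly, with Theorem~\ref{fixedpoint} and Theorem~\ref{Birk3} doing the translation from order ideals to fixed points and Theorem~\ref{Birk4} identifying the join-irreducibles of $L_A$ with the rows of $A$.

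The one place you go beyond the paper is in spelling out that \emph{unique} can only mean unique up to permutation similarity: since a fixed abstract poset on $n$ elements generally admits several natural labelings, it produces several distinct matrices $A, B \in \mathcal{PM}(n)$ with $L_A \cong L_B$, so literal uniqueness fails and the correct statement is $A \cong B$ via some $Q \in \mathfrak{P}_n$. That is a genuine clarification of the theorem as printed, and your chain $\SetIdeals(P_A) \cong \SetIdeals(P_B) \Rightarrow P_A \cong P_B$ via Theorem~\ref{Birk2}, followed by the passage from a poset isomorphism of NL posets on $X_n$ to a permutation matrix, is the right way to realize it. It would be worth also noting briefly, as you implicitly use, that the $n$ rows $r_0,\dots,r_{n-1}$ are pairwise distinct and none equals $\mathbf{0}$ (both follow from $A$ being lower unitriangular), so the second assertion really does give $n$ join-irreducibles, consistent with $n = \lvert \SetJoinIrreducibles(L) \rvert$.
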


For instance, let $L$ be the Boolean lattice $\mathbb{B}_3$. Since 
$$
L\cong\{x\in{\mathbb{B}^3}\mid xI_3=x\}=\{0,1\}^3,
$$
it follows that the poset matrix is $A=I_3$. Moreover,
$$
\SetJoinIrreducibles(L)\cong \{(1,0,0),\;(0,1,0),\;(0,0,1)\}.
$$ 

The matrix representation in Birkhoff’s theorem provides a natural bridge 
between algebraic, combinatorial, and topological aspects of distributive lattices.
For the poset matrix $A\in{\cal PM}(n)$ associated with the distributed lattice $L_A$, each 
row vector $r_i$ of $A$ corresponds to a join-irreducible element of $L_A$, while the 
Boolean fixed–point equation $xA=x$ encodes the closure of order ideals of the NL poset $P_A$
under union and intersection. 
\medskip

Recall that a topology of size $n$ is a set $\Gamma$ of subsets of $X_n$ such that $\emptyset\in \Gamma$, $X_n\in \Gamma$, and $\Gamma$ is closed for $\cup$ and $\cap$.
\begin{proposition} If $P$ is an NL poset of size $n$, then $\SetIdeals(P)$ is a topology of
size $n$.
\end{proposition}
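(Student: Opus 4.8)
The plan is to check directly the three defining conditions in the definition of a topology of size $n$: that $\SetIdeals(P)$ is a family of subsets of $X_n$ containing both $\emptyset$ and $X_n$, and that it is closed under $\cup$ and $\cap$. Since the ground set of $P$ is $X_n$, every order ideal is by definition a subset of $X_n$, so $\SetIdeals(P)$ is already a candidate topology of size $n$; only the three closure-type axioms remain.

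First I would dispose of the two boundary elements. The empty set is vacuously downward closed, hence $\emptyset\in\SetIdeals(P)$, and $X_n$ is trivially downward closed since $y\preceq x$ forces $y\in X_n$ for any $x$; in the language of Section~5 these are exactly the bottom and top elements of the ideal lattice. Next, for closure under the set operations I would give the one-line verification: if $J_1,J_2\in\SetIdeals(P)$, $x\in J_1\cup J_2$, and $y\preceq x$, then $x$ lies in one of the $J_i$, whence $y\in J_i\subseteq J_1\cup J_2$, so $J_1\cup J_2$ is downward closed; symmetrically, $x\in J_1\cap J_2$ forces $y$ into both $J_1$ and $J_2$, so $J_1\cap J_2\in\SetIdeals(P)$. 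Alternatively, this closure is already subsumed by Theorem~\ref{Birk3}, which identifies $\SetIdeals(P_A)$ with the distributive lattice $L_A$ whose join and meet are precisely union and intersection; invoking that theorem reduces the proposition to the observation about $\emptyset$ and $X_n$ above.

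There is no genuine obstacle here: the statement is an unwinding of definitions, and the single point deserving a remark is that, because $X_n$ is finite, closure under finite unions and intersections is all that the topology axioms require, so the lattice-theoretic closure established earlier coincides with the topological one. I would therefore keep the argument to a few lines, citing Theorem~\ref{Birk3} to avoid repeating the downward-closure computation, and conclude that $\SetIdeals(P)$ satisfies every axiom of a topology of size $n$.
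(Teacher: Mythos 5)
Your proof is correct and takes essentially the same approach as the paper, whose own proof is a one-line assertion that the topology axioms follow immediately from the definition of order ideals; you simply make the routine downward-closure checks for $\emptyset$, $X_n$, $\cup$, and $\cap$ explicit.
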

\begin{proof} 
Let $P$ be an NL poset on the set $X_n$. Since $\SetIdeals(P)$ is the collection of order ideals of $P$, it immediately follows that $\SetIdeals(P)$ satisfies the axioms of a topology on $X_n$.  
\end{proof}
\medbreak

A topology $T$ is \emph{naturally labeled} if there exists an NL poset $P$ satisfying $T =
\SetIdeals(P)$. Let $\SetNLT$ be the graded set of naturally labeled topologies (abbreviated
as NLTs).

\begin{theorem} For any NLT $T$, there is a unique NL poset $P$ such that $T = \SetIdeals(P)$.
\end{theorem}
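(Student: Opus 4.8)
The plan is as follows. Existence is free: by the very definition of a naturally labeled topology, an NLT $T$ equals $\SetIdeals(P)$ for at least one NL poset $P$. So only uniqueness needs an argument, and the strategy is to show that $T$ already determines $P$ intrinsically, through its join-irreducible elements combined with the natural labeling of $X_n$.

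First I would note that $T$, being a topology on $X_n$, is a finite distributive lattice under inclusion, with join $\cup$ and meet $\cap$; hence its set of join-irreducibles $\SetJoinIrreducibles(T)\subseteq 2^{X_n}$ depends only on the collection $T$ itself, not on any chosen realization. Next, assuming $T=\SetIdeals(P)$ for an NL poset $P$ on $X_n$, I would invoke Theorem~\ref{Birk2} --- equivalently the matrix form in Theorem~\ref{Birk4} together with the identification $\mathrm{supp}(r_i)=\downarrow i$ --- to conclude that the join-irreducibles of $\SetIdeals(P)$ are exactly the principal order ideals, so $\SetJoinIrreducibles(T)=\{\,\downarrow_P x : x\in X_n\,\}$. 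Then comes the step where the hypothesis is really used: since $P$ is naturally labeled, $y\preceq_P x$ implies $y\le x$, so $x=\max\bigl(\downarrow_P x\bigr)$ in the ordinary total order $0<1<\cdots<n-1$. Thus $x\mapsto\downarrow_P x$ is a bijection from $X_n$ onto $\SetJoinIrreducibles(T)$, and the unique member of $\SetJoinIrreducibles(T)$ whose ordinary maximum is $x$ is precisely $\downarrow_P x$.

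Uniqueness then falls out. If $P$ and $P'$ are NL posets on $X_n$ with $\SetIdeals(P)=T=\SetIdeals(P')$, the previous paragraph applied to each gives $\{\,\downarrow_P x : x\in X_n\,\}=\SetJoinIrreducibles(T)=\{\,\downarrow_{P'} x : x\in X_n\,\}$, and in each family the member with ordinary maximum $x$ is unique; hence $\downarrow_P x=\downarrow_{P'} x$ for every $x\in X_n$. Consequently $y\preceq_P x \iff y\in\downarrow_P x \iff y\in\downarrow_{P'} x \iff y\preceq_{P'} x$, so the two partial orders agree and $P=P'$.

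I expect the only delicate point to be exactly that last ``indexing'' step: Theorem~\ref{Birk2} by itself supplies merely a poset isomorphism $\SetJoinIrreducibles(\SetIdeals(P))\cong P$, not an equality of labeled posets, and upgrading it to a genuine equality is where the natural labeling is indispensable --- it tethers each principal ideal $\downarrow x$ to its own label $x$ via $x=\max\downarrow x$. The remaining ingredients (distributivity of $T$ under $\cup,\cap$, and the identification of join-irreducibles with principal ideals) are routine given the earlier results.
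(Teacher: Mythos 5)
Your proof is correct, and it takes a genuinely different route from the paper's.

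The paper recovers the principal down-sets directly from $T$ by intersection: for each $y\in X_n$ it sets $\downarrow_T y=\bigcap\{I\in T\mid y\in I\}$, observes that this is the smallest ideal containing $y$ and hence equals $\downarrow_P y$, and concludes that any two NL posets with the same ideal lattice have identical principal ideals and hence identical orders. Notably, the paper's argument never actually invokes the natural-labeling hypothesis --- it works for any two posets on the ground set $X_n$ whose ideal lattices coincide as collections of subsets, so it is both shorter and slightly more general. Your argument instead passes through $\SetJoinIrreducibles(T)$, uses Theorems~\ref{Birk2}/\ref{Birk4} to identify the join-irreducibles with the principal ideals, and then uses the NL hypothesis essentially: since $y\preceq_P x$ forces $y\le x$, the label $x$ is recoverable from $\downarrow_P x$ as its ordinary maximum, which gives a canonical re-indexing of $\SetJoinIrreducibles(T)$ by $X_n$ that is shared by $P$ and $P'$. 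What your route buys is a cleaner conceptual picture --- it makes visible exactly how the natural labeling rigidifies Birkhoff's isomorphism $\SetJoinIrreducibles(\SetIdeals(P))\cong P$ into an equality of labeled posets --- at the mild cost of an extra lemma (the identification of join-irreducibles) and one more bijection to track. Both are valid; the paper's is the more economical, yours is the more structural.
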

\begin{proof} Existence holds by the definition of NLT. It remains to prove uniqueness.
Suppose $P$ and $P'$ are NL posets on $X_n$ such that $\SetIdeals(P)=\SetIdeals(P')=:T$.
We show that the partial orders $\preceq_P$ and $\preceq_{P'}$ must coincide.
For $y\in X_n$ and an order ideal $I\in\SetIdeals(P)$, consider the subset of $X_n$ given by
$$
\downarrow_T y \;:=\; \bigcap\{\, I\in T \mid y\in I \,\}.
$$
Since $T=\SetIdeals(P)$ is closed under arbitrary intersections, the set $\downarrow_T y$ is the smallest order ideal containing $y$. Thus, $\downarrow_T y$ coincides with the principal ideal $\downarrow_P y$, i.e.,
$$
\downarrow_T y\;=\;\downarrow_P y \;=\; \{\, x\in X_n \mid x \preceq_P y \,\}.
$$ 
By the same argument with $P'$ in place of $P$, we also have $\downarrow_T y \;=\; \downarrow_{P'} y$.
Therefore, for all $y\in X_n$, $\downarrow_{P} y=\downarrow_{P'} y$ which proves $\preceq_P$ and $\preceq_{P'}$ must coincide. Thus the proof is completed.\end{proof}

%\section{Growing operation and combinatorial generation}
Let $T$ be an NLT of size $n$ and $S$ be an element of $T$. Let $\Grow_S(T)$ be the NLT of
size $n + 1$ obtained by {\it doubling the interval} $[S, X_n]$ of $T$ in such a way that the
doubled interval is obtained by inserting $n$ to each element of the copied interval. The interval doubling is a fundamental operation introduced in \cite{Day79}.
\medbreak

For instance, for $T = \{\epsilon, 0, 1, 01, 13, 012, 013,0123\}$, we have
\begin{eqnarray*}
    \Grow_{13}(T) &=& \{\epsilon, 0, 1, 01, 13, 012, 013, 0123, 134, 0134, 01234\};\\
 \Grow_{013}(T) &=& \{\epsilon, 0, 1, 01, 13, 012, 013, 0123, 0134, 01234\}.
\end{eqnarray*}

In the following theorem, we show that adjoining a new element with an element $S$ of $T$ corresponds to doubling the interval $[S,X_n]$ in the topology of ideals.

\begin{theorem} \label{thm:grow}
Let $A\in{\cal PM}(n)$ be the poset matrix of an NL poset $P$, and let
$T$ be the associated NLT of order ideals of $P$. If $v$ is the characteristic vector of any $S\in T$, then
\begin{eqnarray}\label{NLT}
\Grow_S(T)\;=\;\SetIdeals\!\bigl(P_{A^v}\bigr).
\end{eqnarray}
\end{theorem}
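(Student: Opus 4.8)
The plan is to describe the order ideals of $P_{A^v}$ explicitly and then match the resulting family, subset by subset, against the one produced by interval doubling. First I would record the shape of $P_{A^v}$: since $S$ is an order ideal of $P$, its characteristic vector $v$ is a poset vector of $A$ by Theorem~\ref{ppp} (equivalently Theorem~\ref{fixedpoint}), so $A^v\in\mathcal{PM}(n+1)$ is well defined. The block form $A^v=\bigl[\begin{smallmatrix}A&\mathbf{0}\\ v&1\end{smallmatrix}\bigr]$ then yields three facts about $P_{A^v}$: its restriction to $X_n$ is the order of $P$ (because $A^v[X_n]=A$); the new element $n$ lies below no element of $X_n$ (because the last column of $A^v$ vanishes above the diagonal); and $\downarrow n=S\cup\{n\}$ in $P_{A^v}$ (because the last row of $A^v$ is $v$ followed by $1$).

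Next I would split the order ideals $I$ of $P_{A^v}$ according to whether $n\in I$. If $n\notin I$, then $I\subseteq X_n$, and since downward closure in $P_{A^v}$ coincides with downward closure in $P$ on subsets of $X_n$ (here one uses that $n$ is below no element of $X_n$), we get $I\in\SetIdeals(P)=T$; conversely every $I\in T$ is downward closed in $P_{A^v}$. If $n\in I$, then downward closure forces $S\cup\{n\}=\,\downarrow n\subseteq I$, so $J:=I\setminus\{n\}$ is an order ideal of $P$ with $S\subseteq J$; conversely, for any $J\in T$ with $S\subseteq J$ the set $J\cup\{n\}$ is downward closed in $P_{A^v}$, the only new comparabilities to check being those below $n$, which lie in $S\cup\{n\}\subseteq J\cup\{n\}$. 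This produces the disjoint decomposition
$$
\SetIdeals(P_{A^v})\;=\;T\ \sqcup\ \bigl\{\,J\cup\{n\}\ \big|\ J\in T,\ S\subseteq J\,\bigr\},
$$
the union being disjoint because the members of $T$ omit $n$ while all the remaining members contain it.

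Finally I would identify $\{J\in T\mid S\subseteq J\}$ with the interval $[S,X_n]$ of the lattice $(T,\subseteq)$, which is legitimate since $X_n\in T$ is the maximum order ideal and $S\in T$ by hypothesis. Hence the second block of the decomposition is precisely the copy of $[S,X_n]$ obtained by inserting $n$ into each of its members, and by the definition of $\Grow_S$ --- doubling $[S,X_n]$ with $n$ adjoined to the copied interval --- this is exactly $\Grow_S(T)$, giving \eref{NLT}. The argument is essentially bookkeeping; the points that need care are matching the informal \emph{interval-doubling} operation with the explicit family $\{\,J\cup\{n\}\mid J\in[S,X_n]\,\}$, and verifying the boundary behaviour of the new element $n$ (above exactly $S$, below nothing in $X_n$), which is where the specific block structure of $A^v$ is used. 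No separate verification that $\Grow_S(T)$ is a topology is needed, since it has just been exhibited as $\SetIdeals$ of an NL poset.
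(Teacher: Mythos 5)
Your proposal is correct and follows essentially the same route as the paper: classify the order ideals of $P_{A^v}$ by whether they contain $n$, observe that the ideals without $n$ are exactly $T$ while those with $n$ are exactly $\{U\cup\{n\}\mid U\in T,\;S\subseteq U\}$, and match this against the defining description of $\Grow_S(T)$ as interval doubling of $[S,X_n]$. Your write-up is simply more explicit than the paper's in verifying the downward-closure checks and the structural facts about $P_{A^v}$ (the restriction to $X_n$, $n$ lying below nothing, and $\downarrow n=S\cup\{n\}$), which the paper leaves implicit.
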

\begin{proof} For $S\in T=\SetIdeals(P)$, let $v$ be the characteristic vector of $S$. Then $v$ is the poset vector with ${\rm{supp}}(v)=S$. 
Thus, the $v$-extension $A^v$ is a poset matrix of size $n+1$ whose associated NL poset $P':=P_{A^v}$ contains the new element $x:=n$. Let $T'=\SetIdeals(P')$.
An ideal $I\in T'$ satisfies that either (i) does not contain $x$, hence $I\in \SetIdeals(P)$, or (ii) is of the form
$U\cup\{x\}$ where $U\in \SetIdeals(P)$ and $S\subseteq U$. Thus 
\begin{eqnarray*}
T'\;=\;T\ \bigcup\ \bigl\{\,U\cup\{x\}\ :\ U\in T,\ S\subseteq U\,\bigr\}.
\end{eqnarray*}
On the other hand, by the definition of doubling the interval $[S, X_n]$ of the distributive lattice
$T$, 
$$
\Grow_S(T)\;=\;T\ \bigcup\ \bigl\{\,U\cup\{n\}\ :\ U\in T,\ S\subseteq U\,\bigr\}.
$$
Identifying the new element $x$ in $P'$ with the label $n$, we obtain (\ref{NLT}), as required.
\end{proof}

\begin{theorem} \label{thm:increase_topology}
Let $P$ be a naturally labeled poset of size $n$. For any naturally
labeled poset $P'$ of size $n + 1$ such that the restriction of $P'$ on $X_n$ is $P$, there
is a unique $S \in \SetIdeals(P)$ such that $\Grow_S(\SetIdeals(P)) = \SetIdeals(P')$.
\end{theorem}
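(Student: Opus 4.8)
The plan is to reduce the statement to Theorem~\ref{thm:grow} by first identifying the poset matrix of $P'$ as a $v$-extension of the poset matrix of $P$, and then to prove uniqueness by recovering $S$ intrinsically from $\SetIdeals(P')$.

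First I would observe that, since $P'$ is naturally labeled and $n$ is its largest label, no element of $X_{n+1}$ can lie strictly above $n$ in $P'$; hence $n$ is a maximal element of $P'$. Writing $A'$ for the poset matrix of $P'$, this forces the entries of the last column of $A'$ above the diagonal to vanish, and since the restriction of $P'$ to $X_n$ is $P$, the principal $n\times n$ submatrix of $A'$ obtained from the first $n$ rows and columns is exactly $A:=A_P$. Therefore $A'=A^v$, where $v\in\{0,1\}^n$ is the characteristic vector of $S:=\{\,x\in X_n\mid x\prec_{P'} n\,\}$. Since $A^v=A'\in\mathcal{PM}(n+1)$, Theorem~\ref{ppp} (equivalently Theorem~\ref{fixedpoint}) shows that $v$ is a poset vector of $A$, so $S=\mathrm{supp}(v)$ is an order ideal of $P_A=P$, that is $S\in\SetIdeals(P)$. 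Applying Theorem~\ref{thm:grow} to this $S$ and $v$ yields
$$
\Grow_S(\SetIdeals(P))\;=\;\SetIdeals\!\bigl(P_{A^v}\bigr)\;=\;\SetIdeals(P'),
$$
which establishes existence.

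For uniqueness, suppose $S_1,S_2\in\SetIdeals(P)$ satisfy $\Grow_{S_1}(\SetIdeals(P))=\Grow_{S_2}(\SetIdeals(P))=:T'$. From the explicit description of $\Grow_S$ used in the proof of Theorem~\ref{thm:grow}, the members of $\Grow_{S_i}(\SetIdeals(P))$ that contain $n$ are precisely the sets $U\cup\{n\}$ with $U\in\SetIdeals(P)$ and $S_i\subseteq U$; as $S_i$ itself lies in $\SetIdeals(P)$, the intersection of all such members equals $S_i\cup\{n\}$. Hence $S_i\cup\{n\}$ is determined by $T'$ alone, so $S_1\cup\{n\}=S_2\cup\{n\}$, and since $n\notin S_1\cup S_2$ we conclude $S_1=S_2$.

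The argument is essentially bookkeeping once Theorem~\ref{thm:grow} is in hand, so there is no serious obstacle. The only points requiring a little care are the reduction step — verifying that the adjoined element is maximal in $P'$ and therefore that $A_{P'}$ genuinely has the block form $A^v$ rather than a more general lower-triangular extension — and the observation that $S$ can be read off from $\SetIdeals(P')$ as the unique smallest ideal containing $n$, with $n$ deleted.
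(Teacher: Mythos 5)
Your existence argument matches the paper's: both reduce to a $v$-extension of $A_P$ via Theorem~\ref{ppp} and then invoke Theorem~\ref{thm:grow}. Where you differ is the uniqueness step, and your version is actually the more explicit one. The paper disposes of uniqueness by appealing to the one-to-one correspondence between $v$-extensions of $A$ and order ideals of $P_A$ from Theorem~\ref{ppp}; this implicitly requires knowing that $\SetIdeals(P')$ determines $P'$ (and hence $A'$, hence the last row $v$, hence $S$), which leans on the earlier theorem that each NLT arises from a unique NL poset. You instead extract $S$ intrinsically from $\Grow_S(\SetIdeals(P))$: the intersection of all members containing $n$ equals $S\cup\{n\}$, since $S\cup\{n\}$ is itself such a member and every member containing $n$ has the form $U\cup\{n\}$ with $S\subseteq U$. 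This reads $S$ off directly from $\SetIdeals(P')$ without routing through the uniqueness of $P'$, so it is both self-contained and arguably tighter. Your added remark that $n$ is maximal in $P'$ because $P'$ is naturally labeled, which is what guarantees $A_{P'}$ genuinely has the block form $A^v$, is a useful verification that the paper leaves implicit. The argument is correct.
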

\begin{proof} From the hypotheses of the statement, it appears that the poset $P'$ is obtained from $P$ by
adding a new element $n$. Therefore, by Theorem~\ref{ppp}, there is a poset
vector $v$ of the poset matrix $A$ of $P$ such that the poset matrix $A'$ of $P'$ is the
$v$-extension of $A$. Let $S$ be the subset of $X_n$ such that $v$ is the characteristic
vector of $S$. By Theorem~\ref{thm:grow}, $\Grow_S(\SetIdeals(P)) = \SetIdeals(P')$.
The uniqueness of $S$ is a consequence of Theorem~\ref{ppp} and in particular of
the fact that $v$-extensions and order ideals of a NL poset are in one-to-one
correspondence.
\end{proof}

For instance, let the poset $(P, \Leq)$ on $X_5$ such that $0 \Leq 1 \Leq 2$, $1 \Leq 4$,
and $3 \Leq 4$. We have $\SetIdeals(P) = \{\epsilon, 0, 01, 3, 03, 012, 013, 0123, 0134,
01234\}$. By setting $P'$ as the poset obtained by adding to $P$ the element $5$ so that $2
\Leq 5$ and $3 \Leq 5$, the unique set $S \in \SetIdeals(P)$ such that
$\Grow_S(\SetIdeals(P)) = \SetIdeals(P')$ is $S = \{0, 1, 2, 3\}$.
\medbreak

The operation $\Grow_S$ on NLTs leads to the following obvious algorithm to recursively
generate all NLTs of size $n$.
\medbreak

\begin{enumerate}
    \item \textbf{function} GenerateNLTs($n$)
    \item \qquad \textbf{if} $n = 0$, \textbf{then}
    \item \qquad \qquad \textbf{return} $\{ \{\epsilon\} \}$
    \item \qquad \textbf{else}
    \item \qquad \qquad $R := \emptyset$
    \item \qquad \qquad \textbf{for each} $T \in$ GenerateNLTs($n - 1$)
    \item \qquad \qquad \qquad \textbf{for each} $S \in T$
    \item \qquad \qquad \qquad \qquad $R := R \cup \left\{\Grow_S(T)\right\}$
    \item \qquad \qquad \textbf{return} $R$
\end{enumerate}
\medbreak

This algorithm can be used to generate all NL posets of size $n$ by converting
an NLT $T$ into its corresponding NL poset obtained by considering the join-irreducible elements
of $T$. Such an approach is not adapted to compute the numbers of NLTs of a given size $n$
since GenerateNLTs needs to keep in memory all NLTs of sizes $n$ or smaller.
\medbreak

However, we can use the operation $\Grow_S$ in order to propose a first/next approach which
is very light in memory. See \cite{NW78} for a general description of first/next generation algorithms. For this, we need the following definitions.
\medbreak

Let, for any $n \in \N$,
\begin{equation*}
    \First(n) := \Grow_\epsilon^n(\{\epsilon\}).
\end{equation*}
For instance, $\First(3) = \Grow_\epsilon^3(\{\epsilon\}) = \{\epsilon, 0, 1, 2, 01, 02, 12,
012\}$.
\medbreak

For any NLT $T$ of size $n$, let $\Cut(T)$ be the NLT of size $n - 1$ obtained by deleting
all sets of $T$ which contain $n - 1$. For instance, $\Cut(\{\epsilon\}) = \{\epsilon\}$ and
\begin{equation*}
    \Cut(\{\epsilon, 0, 1, 01, 13, 012, 013, 0123\}) = \{\epsilon, 0, 1, 01, 012\}.
\end{equation*}
\medbreak

For any NLT $T$ of size $n$ and any $S \in T$, let $\Next_S(T)$ be the set of $T$ which is
immediately greater than $S$ for the length-lexicographic order on the word representation of the
subsets of $X_n$. For instance,
\begin{equation*}
    \Next_{13}(\{\epsilon, 0, 1, 01, 13, 012, 013, 0123\}) = 012.
\end{equation*}
Observe that this map $\Next_S$ is partial since, for instance, $\Next_{01}(\{\epsilon, 0,
1, 01\})$ is not defined.
\medbreak

Now, for any NLT $T$ of size $n$, let $\Next(T)$ be the NLT returned by the following
algorithm taking $T$ as input:
\begin{enumerate}
    \item \textbf{function} Next($T$)
    \item \qquad \textbf{let} $S$ be the smallest set of $T$ containing $n - 1$
    \item \qquad $S' := S \setminus \{n - 1\}$
    \item \qquad $T' := \Cut(T)$
    \item \qquad \textbf{if} $\Next_{S'}\left(T'\right)$ is defined, \textbf{then}
    \item \qquad \qquad $S'' := \Next_{S'}\left(T'\right)$
    \item \qquad \qquad \textbf{return} $\Grow_{S''}\left(T'\right)$
    \item \qquad \textbf{else}, \textbf{if} ${\rm Next}(T')$ is defined, \textbf{then}
    \item \qquad \qquad \textbf{return} $\Grow_\epsilon\left({\rm Next}(T')\right)$
    \item \qquad \textbf{else}
    \item \qquad \qquad \textbf{return} the fact that ${\rm Next}(T)$ is not defined
\end{enumerate}
\medbreak
Let us consider the following use of the $\First$ and $\Next$ functions in order to generate
all NLTs of size $n = 3$.
\begin{enumerate}
    \item The first generated NLT is, by definition of the first/next approach, 
    
    $\First(3) =
    \{\epsilon, 0, 1, 2, 01, 12, 012\} = T_1$.

    \item By definition of the first/next approach, the next generated NLT is $\Next(T_1)$.
    By following the Next algorithm, $S = 2$, $S' = \epsilon$, and $T' = \{\epsilon, 0, 1,
    01\}$. We have moreover $\Next_{S'}(T') = 0 = S''$. The generated NLT is
    $\Grow_{S''}(T') = \{\epsilon, 0, 1, 01, 02, 012\} = T_2$.

    \item By definition of the first/next approach, the next generated NLT is $\Next(T_2)$.
    By following the Next algorithm, $S = 02$, $S' = 0$, and $T' = \{\epsilon, 0, 1,
    01\}$. We have moreover $\Next_{S'}(T') = 1 = S''$. The generated NLT is
    $\Grow_{S''}(T') = \{\epsilon, 0, 1, 01, 12, 012\} = T_3$.

    \item By definition of the first/next approach, the next generated NLT is $\Next(T_3)$.
    By following the Next algorithm, $S = 12$, $S' = 1$, and $T' = \{\epsilon, 0, 1,
    01\}$. We have moreover $\Next_{S'}(T') = 01 = S''$. The generated NLT is
    $\Grow_{S''}(T') = \{\epsilon, 0, 1, 01, 012\} = T_4$.

    \item By definition of the first/next approach, the next generated NLT is $\Next(T_4)$.
    By following the Next algorithm, $S = 012$, $S' = 01$, and $T' = \{\epsilon, 0, 1,
    01\}$. We have moreover that $\Next_{S'}(T')$ is not defined, so that the generated NLT
    is $\Grow_\epsilon(\Next(T'))$. Since, as can be easily checked, $\Next(T') =
    \{\epsilon, 0, 01\}$, this next generated NLT is $\Grow_\epsilon(\Next(T')) =
    \{\epsilon, 0, 2, 01, 02, 012\} = T_5$.

    \item By definition of the first/next approach, the next generated NLT is $\Next(T_5)$.
    By following the Next algorithm, $S = 2$, $S' = \epsilon$, and $T' = \{\epsilon, 0,
    01\}$. We have moreover $\Next_{S'}(T') = 0 = S''$. The generated NLT is
    $\Grow_{S''}(T') = \{\epsilon, 0, 01, 02, 012\} = T_6$.

    \item By definition of the first/next approach, the next generated NLT is $\Next(T_6)$.
    By following the Next algorithm, $S = 02$, $S' = 0$, and $T' = \{\epsilon, 0,
    01\}$. We have moreover $\Next_{S'}(T') = 01 = S''$. The generated NLT is
    $\Grow_{S''}(T') = \{\epsilon, 0, 01, 012\} = T_7$.

    \item By definition of the first/next approach, the next generated NLT is $\Next(T_7)$.
    By following the Next algorithm, $S = 012$, $S' = 01$, and $T' = \{\epsilon, 0, 01\}$.
    We have moreover that $\Next_{S'}(T')$ is not defined and that, as can be easily
    checked, that $\Next(T')$ is not defined neither. Therefore, $\Next(T_7)$ is not defined
    and the generation of the NLT of size $3$ is done.
\end{enumerate}
\medbreak
\begin{theorem}
For any $n \in \N$, the set of NLTs obtained by iteratively applying $\Next$ on $\First(n)$
is the set $\SetNLT(n)$.
\end{theorem}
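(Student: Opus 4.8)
The plan is to realize the family of all NLTs as an ordered rooted tree and to recognize $\First$ and $\Next$ as, respectively, ``move to the leftmost node at depth $n$'' and ``move to the next node at depth $n$'' in this tree. The statement then reduces to the elementary fact that a single level of a finite tree is traversed, left to right, in finitely many steps.

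First I would build the tree. The proof of Theorem~\ref{thm:grow} gives the explicit description $\Grow_S(T')=T'\cup\{U\cup\{m\}:U\in T',\ S\subseteq U\}$ for an NLT $T'$ of size $m$, in which every newly adjoined set contains the new top element $m$; hence $\Cut\bigl(\Grow_S(T')\bigr)=T'$. Together with Theorems~\ref{thm:grow} and~\ref{thm:increase_topology}, this shows that $S\mapsto\Grow_S(T')$ is a bijection from $T'$ onto the set of NLTs of size $m+1$ whose $\Cut$ equals $T'$. Taking the unique size-$0$ NLT $\{\epsilon\}$ as root and $\Cut(T)$ as the parent of each NLT $T$ of positive size thus yields a rooted tree whose depth-$n$ nodes are exactly the elements of $\SetNLT(n)$, the children of $T'$ being indexed by the sets $S\in T'$. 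I would order the children of $T'$ by the length-lexicographic order on their indices $S$; since $\epsilon$ is length-lexicographically least in every topology, the leftmost child of $T'$ is $\Grow_\epsilon(T')$. This equips $\SetNLT(n)$ with the total order $\prec$ given by the left-to-right order of depth-$n$ nodes, namely $T_1\prec T_2$ if $\Cut(T_1)\prec\Cut(T_2)$, and, when $\Cut(T_1)=\Cut(T_2)$, if the index of $T_1$ precedes that of $T_2$ length-lexicographically. From $\First(n)=\Grow_\epsilon^n(\{\epsilon\})$ an immediate induction on $n$ gives $\First(n)=\min\SetNLT(n)$.

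The core step is to show, by induction on $n$, that for every $T\in\SetNLT(n)$ the procedure Next returns the $\prec$-successor of $T$ and reports ``undefined'' precisely when $T=\max\SetNLT(n)$. The delicate point is that the index of the edge joining $T$ to its parent is reconstructible from $T$ alone: if $T=\Grow_{S^\ast}(\Cut(T))$, then by the formula above the sets of $T$ containing the top element $n-1$ are exactly the $U\cup\{n-1\}$ with $S^\ast\subseteq U\in\Cut(T)$, and the one of least size among these, hence the length-lexicographically least, is $S^\ast\cup\{n-1\}$; so the set $S'$ computed in line~3 of Next equals $S^\ast$. Granting this, the three branches of Next correspond to the three cases of the successor map on the chain $\SetNLT(n)$: if $S'$ is not length-lexicographically greatest in $\Cut(T)$, then $\Next_{S'}(\Cut(T))$ is defined and is the index of the next child of $\Cut(T)$, whose associated NLT is the $\prec$-successor of $T$; if $S'$ is greatest in $\Cut(T)$ but $\Cut(T)\neq\max\SetNLT(n-1)$, then by the induction hypothesis the recursive call of Next on $\Cut(T)$ returns the $\prec$-successor $U$ of $\Cut(T)$ in $\SetNLT(n-1)$, and the $\prec$-successor of $T$ is the leftmost child $\Grow_\epsilon(U)$ of $U$, which exists because $\epsilon$ lies in every topology; and if moreover $\Cut(T)=\max\SetNLT(n-1)$, then $T$ is the last child of the last depth-$(n-1)$ node, so $T=\max\SetNLT(n)$ and Next correctly signals termination. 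The base case $n=0$ is trivial, $\SetNLT(0)$ being a singleton on which Next is never defined.

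Finally, since $\SetNLT(n)$ is finite and $\prec$ is a total order, iterating the successor map Next starting from $\First(n)=\min\SetNLT(n)$ produces the elements of $\SetNLT(n)$ one at a time, in $\prec$-increasing order, and then stops; hence the set of NLTs so obtained is exactly $\SetNLT(n)$. I expect the main obstacle to be the bookkeeping in the inductive step — in particular, verifying that the second branch of Next really lands on the leftmost child of the next depth-$(n-1)$ node — and the edge-index reconstruction lemma above is what makes that verification go through.
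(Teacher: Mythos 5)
Your proof is correct and follows essentially the same inductive strategy as the paper: it identifies the three branches of \textbf{Next} with the three cases of the successor map (next sibling, first child of the next parent, or end of enumeration), uses Theorems~\ref{thm:grow} and~\ref{thm:increase_topology} to set up the bijection between elements $S$ of an NLT and its extensions, and recurses on the size $n$. Your contribution beyond the paper's argument is to make two points fully explicit that the paper leaves implicit: the rooted-tree framing (children of $T'$ indexed by $S\in T'$ via $\Grow_S$, parent given by $\Cut$) and, more substantively, the reconstruction lemma that the set $S'$ computed in line~3 of \textbf{Next} really is the index $S^\ast$ with $T=\Grow_{S^\ast}(\Cut(T))$ --- which is exactly the point where the paper's proof silently identifies the algorithm's state with the current position in the enumeration; filling that step in is a genuine improvement in rigor even though the overall route is the same.
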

\begin{proof} We proceed by induction on $n$. When $n = 0$ or $n = 1$, the property
holds immediately since the algorithm Next, applied on $\First(n)$, produces no output. This
is in accordance with the fact that $\First(n)$ is the only NLT of size $n$.

Assume now that the property holds for an $n - 1 \geq 0$. Let us look at what the algorithm
Next returns on the input of an NLT $T$ of size $n \geq 2$. At line 2, the set $S$ is
well-defined since, as $T$ is a topology, the intersection of all sets of $T$ containing $n
- 1$ belongs to $T$. The NLT $T'$ computed at line 4 is the NLT of size $n - 1$ obtained by
deleting the element $n$ of $T$. At line 5, the algorithm tests if $S' = S \setminus \{n -
1\}$ is not the last set of $T'$ with respect to the length-lexicographic order. When this
is the case, $S''$ is defined as the set following $S'$ (with respect to the
length-lexicographic order) in $T'$. Again in this case, the algorithm returns the NLT
$T''$ obtained by doubling the interval $[S'', X_n]$. By
Theorem~\ref{thm:increase_topology}, $T''$ is the NLT of a poset $P''$ obtained by adding
a new element $n$ to the poset $P'$ whose $T'$ is its NLT. In this way, by applying
iteratively $\Next$ on $T$, this process generates all NLTs obtained by adding $n$ to $T'$
in all possible ways. It remains to explain what happens when $S'$ is the last set of $T'$
with respect to the length-lexicographic order. In this case, the algorithm computes
recursively $T'' = \Next(T')$. By induction hypothesis, $T''$ is the next generated
element of $T'$. The algorithm returns then $\Grow_\epsilon(T'')$, which is the NLT $T'''$
obtained by doubling $T''$ in its entirety. This NLT $T'''$ is the NLT of a poset $P'''$
obtained by adding a new element $n$ to the poset $P''$ whose $T''$ is the NLT in such a
way that $n$ is the greatest element of $P'''$. When the algorithm Next is again executed
on $T'''$, observe that $S' = \emptyset$, so that $T'''$ is, as expected, the first next
NLT of $T$. In this way, all NLTs of size $n$ are generated iteratively by applying the
algorithm Next. Finally, when $S'$ is the last set of $T'$ and $\Next(T')$ is not defined,
$T$ is already the last generated element of size $n$. In this case, the algorithm returns
that $\Next(T)$ is not defined.
\end{proof}

\end{document}